\documentclass[12pt,leqno]{amsart}
\usepackage{latexsym,amssymb,amsmath,amsfonts,enumerate}
\usepackage{graphicx}
\usepackage{ulem}
\usepackage{xcolor}

\numberwithin{equation}{section}

\newtheorem{theorem}{Theorem}[section]
\newtheorem{corollary}[theorem]{Corollary}
\newtheorem{lemma}[theorem]{Lemma}
\newtheorem{proposition}[theorem]{Proposition}

\theoremstyle{definition}

\numberwithin{equation}{section}

\newcommand{\ep}{\varepsilon}

\newcommand{\bi}{\begin{itemize}}
\newcommand{\ei}{\end{itemize}}
\newcommand{\ben}{\begin{enumerate}}
\newcommand{\een}{\end{enumerate}}

\newcommand\be{\begin{equation}}
\newcommand\ee{\end{equation}}
\newcommand\bea{\begin{eqnarray}}
\newcommand\eea{\end{eqnarray}}
\newcommand\beaa{\begin{eqnarray*}}
\newcommand\eeaa{\end{eqnarray*}}

\newcommand{\baa}{\begin{array}}
\newcommand{\eaa}{\end{array}}
\newcommand{\bss}{\begin{cases}}
\newcommand{\ess}{\end{cases}}

\def\bR{\mathbb{R}}

\def\epsilon{\varepsilon}

\begin{document}
\title[Spreading dynamics]{Spreading dynamics for diffusive competition systems\\ in shifting environments}

\author[T. Giletti]{Thomas Giletti}
\address{Laboratoire de Math\'{e}matiques Blaise Pascal, UMR 6620, University Clermont-Auvergne, 63178 Aubi\`{e}re, France}
\email{thomas.giletti@uca.fr}

\author[J.-S. Guo]{Jong-Shenq Guo}
\address{Department of Applied Mathematics and Data Science, Tamkang University, Tamsui, New Taipei City 251301, Taiwan}
\email{jsguo@mail.tku.edu.tw}

\medskip

\thanks{Date: \today. Corresponding author: J.-S. Guo.}

\thanks{The work of the second author (JSG) was supported in part by the National Science and Technology Council of Taiwan under the grant 113-2115-M-032-001.
This work is also supported in part by the CNRS-NCTS joint International Research Network ReaDiNet.}

\thanks{{\em 2000 Mathematics Subject Classification.}  35K45, 35K57, 35K55, 92D25.}

\thanks{{\em Key words and phrases:} competition system, shifting environment, spreading dynamics, climate change}

\begin{abstract}
We study the spreading dynamics for two-species diffusive competition systems in shifting environments caused by climate changes. 
Our main goal is to derive conditions for extinction and persistence of each species in the case of a strong-weak competition. 
Depending on the pace of the climate change, but also on whether the strong competitor is faster or slower, we will uncover dramatically different outcomes in the asymptotic behavior of solutions. For instance, it may be that a weak and fast competitor survives while a strong and slow one does not. Furthermore, we find some parameter regime where the outcome depends not only on the climate change speed, but also on both species' resilience to it,
and even on the initial populations' distributions. Our results show how strikingly complex the effect of climate change may be on population dynamics as soon as several species are considered.
\end{abstract}

\maketitle


\section{Introduction}
\setcounter{equation}{0}

In this paper, we study the following diffusive Lotka-Volterra competition system in a shifting environment:
\be\label{P}
\bss
u_t=d_1u_{xx}+r_1u[\alpha_1(x-st)-u-a_2v ],\quad x\in\bR,\, t>0,\vspace{3pt}\\
v_t=d_2v_{xx}+r_2v[\alpha_2(x-st)-b_1u-v ],\quad x\in\bR,\, t>0,\\
\ess
\ee
where $u,v$ are two competing species and, for each $i=1,2$, $d_i$ stands for the diffusion coefficient and
$r_i\alpha_i$ is the intrinsic growth rate, which is spatially and temporally dependent.
Parameters $a_2$ and $b_1$ are inter-specific competition coefficients.
All constants $d_i,r_i,a_2,b_1$ in~\eqref{P} are assumed to be positive.

The functions $\alpha_1$ and $\alpha_2$ model the shifting of the habitat of each species, both moving with the same speed $s>0$. Throughout this paper, for each~$i$ we assume that~$\alpha_i$ is a continuously differentiable function defined in $\bR$ such that
\be\label{al}
-\infty<\alpha_i(-\infty)<0,\; \alpha_i(+\infty)=1,\; i=1,2.
\ee
In other words, the intrinsic growth rate of each species (that is without inter- and intra-species competition) is negative behind the moving frame with speed $s$ where the climate change occurs, and positive ahead. The right limits of the $\alpha_i$, which can be interpreted as the maximal capacities in the absence of the competitor and of climate change, are set to be~$1$ here. This is however without loss of generality up to some renormalization.

We refer the reader to the pioneering works~\cite{bdnz09,PotapovLewis} for the biological modeling viewpoint of such a system in the context of climate change. 
{Since then, the mathematical study of ecological models incorporating shifting environments has attracted a lot of attention. 
	This includes the existence and stability of forced waves (i.e. traveling waves with the same speed as the environmental shift), and the persistence vs extinction of each species.
	
	For the existence and stability of forced waves for a Fisher-KPP scalar equation, we refer the reader to \cite{br08,br09} in the case when the growth rate is negative outside of a spatially bounded domain. For situations where the growth rate remains positive either ahead or behind the shifting heterogeneity, we refer to~\cite{flw16} where it was motivated by an epidemiological model; see also~\cite{hz17}, and~\cite{bf18} which addressed the multiplicity and attractivity of forced waves in~$\bR$.
	For the studies on forced waves in reaction-diffusion systems, we only mention some works on competition systems.
	In particular, the existence, uniqueness, and stability of forced waves for a two-species competition-diffusion system in a shifting habitat was investigated in \cite{dll21}. 
	Then the existence of forced waves for two-species and three-species competition systems was studied in \cite{ggs23} including the existence of forced waves with critical speed. For the case of two competing species with nonlocal dispersal, we refer the reader to \cite{ww21} and the references cited therein.
	
	As for the work on persistence vs extinction, we refer to~\cite{bdnz09} and~\cite{lbsf14} for the Fisher-KPP equation in $\bR$ with respectively a spatially bounded or unbounded subdomain where the growth rate is positive. We also refer the reader to \cite{v15} for the scalar equation in a cylindrical or partially periodic domain, and to~\cite{coville} for the case of nonlocal dispersal. For more general nonlinearities (e.g. monostable or bistable), there seems to be scarcer results, but we may still refer to~\cite{bn15} and~\cite{bg19} where a sharp threshold with respect to initial data was established in some intermediate range of the climate change speed. Coming to the two-species competition systems, the gap formation between the species habitats was studied in \cite{bdd14}, and the persistence vs extinction of each species for the weak competition case (i.e. $a_2,b_1<1$) was studied in \cite{zwy17,ywz19,w22}. The case of strong competition (i.e. $a_2,b_1\ge 1$) was also addressed in \cite{ywz19}.
	The weak competition case with nonlocal dispersal was studied in the work \cite{wwz19}.
	For the study of spreading dynamics on predator-prey systems in shifting environments, we refer the reader to e.g.~\cite{cgg21,gsw23,acdg24}, and references therein.}

Throughout this work, the heterogeneous reaction-diffusion system~\eqref{P} will be supplemented with some initial data~$(u_0,v_0)$ satisfying at least the following assumption:
\be\label{i-bd}
\mbox{$(u_0,v_0)$ is continuous in $\bR$, and such that $0\le \not \equiv u_0,v_0\le 1$.}
\ee
In particular, by a comparison argument we immediately have that $0\le u,v \le 1$ for all~$t>0$. Furthermore, we will typically assume that $u_0$ and $v_0$ have a bounded support in the right direction, which is the most interesting case as far as spreading properties are concerned. Still, other situations may also be biologically relevant. As a matter of fact, the methods we develop here should allow one to handle the case when both or either species already populate the whole environment, e.g. the infimum limit of $u_0$ or $v_0$ as $x \to +\infty$ is positive. Another interesting question is the survival of both species when their habitats are also bounded (i.e. $\alpha_i (+\infty) < 0$), but this turns out to be significantly harder due to some of the nonlinear features of the Lotka-Volterra competition systems that our study will highlight. Coming back to the setting that we intend to study, that is with~\eqref{al} holding true and initial data having bounded support from above, roughly speaking this means that both species are in the process of invading their favorable environment (where $\alpha_i>0$) at the same time as climate change occurs.

In this setting, our goal throughout this paper will be to determine whether either species is able to outpace the climate change, and if so how fast ahead will it spread.\\

With this in mind, let us first discuss the homogeneous competition system, corresponding to~\eqref{P} ahead of the climate change, which is
\be\label{Ph}
\bss
u_t=d_1u_{xx}+r_1u(1-u-a_2v ),\quad x\in\bR,\, t>0,\vspace{3pt}\\
v_t=d_2v_{xx}+r_2v(1-b_1u-v),\quad x\in\bR,\, t>0 .\\
\ess
\ee
System~\eqref{Ph} always admits at least the three constant equilibria
\beaa
E_0:=(0,0),\;E_1:=(1,0),\; E_2:=(0,1).
\eeaa
In this paper, we will assume that
\begin{equation}\label{eq:hypothesis}
	a_2 < 1 < b_1 ,
\end{equation}
which is sometimes referred to as the strong-weak case, $u$ being the strong competitor and~$v$ the weak one. Indeed, as far as the underlying ODE system (i.e. homogeneous system \eqref{Ph} with no spatial component) is concerned, the steady state $E_1 = (1,0)$ is globally stable (in the subspace $\{u >0, v > 0\}$). This means that any initial population for~$u$, no matter how small, will eventually grow and drive $v$ to extinction. Up to inverting $u$ and $v$, assumption~\eqref{eq:hypothesis} of course covers the case when $a_2 > 1 > b_1$.


The spreading dynamics of the diffusive two-species competition system~\eqref{Ph} has been studied, in the strong-weak case~\eqref{eq:hypothesis}, in the remarkable work~\cite{gl19}. We also refer to~\cite{c18} for the spreading dynamics in the strong-strong case $a_2,b_1 >1$. Those works rely heavily on a comparison principle. As we will also build upon the construction of suitable upper-lower and lower-upper solutions, we recall this comparison principle in Section~\ref{sec:comparison}, and we describe in more detail the results of~\cite{gl19} in Section~\ref{sec:prelim_gl}. At this stage, let us provide some heuristics and introduce some relevant notation. 

As we will mostly deal with initial distributions $u_0$ and $v_0$ whose supports are bounded from above, the first relevant speeds are
\beaa
s_1^*:=2\sqrt{d_1r_1},\; s_2^*:=2\sqrt{d_2r_2}.
\eeaa
These are the minimal traveling wave speeds of the Fisher-KPP equations obtained by letting respectively $v \equiv 0$ in the $u$-equation and $u \equiv 0$ in the $v$-equation of \eqref{Ph}. In other words, these are the speeds of respectively $u$ in the absence of $v$, and~$v$ in the absence of $u$ (still omitting also the effect of climate change in both cases). In particular, they are also the fastest speeds each species can hope for.

This naturally suggests that the faster competitor, regardless of the strength of the competition, should persist in some fast enough moving frames. If the fast competitor is also the strong one, i.e. \eqref{eq:hypothesis} and $s_1^* > s_2^*$ simultaneously hold, then the $u$-component of the solution of \eqref{Ph} should spread with speed $s_1^*$, while the weaker $v$-component goes to extinction. On the other hand, the situation is more intricate when \eqref{eq:hypothesis} and $s_2^* > s_1^*$ hold. Indeed, one then expects the faster but weak competitor to spread first, and to eventually be replaced by its stronger counterpart. Therefore, two successive spreading fronts should emerge, connecting consecutively in time, $E_0  = (0,0)$ to $E_2 = (0,1)$, and $E_2 = (0,1)$ to $E_1 = (1,0)$. The main findings of Girardin and Lam~\cite{gl19} entail that, while the former front (i.e. the invading front for the fast species $v$) unsurprisingly moves with the asymptotic spreading speed $s_2^*$, the latter (i.e. the invading front for the slow species~$u$) behaves somewhat unexpectedly. They do establish that the latter front moves with an asymptotically constant spreading speed, which we will denote here by $s_{1,GL}^*$ (the subscript refering to the aforementioned authors), and whose formula we postpone to Section~\ref{sec:prelim_gl}. However, the value of~$s_{1,GL}^*$ may not be equal to~$2 \sqrt{d_1 r_1 (1-a_2)}$, which would have seemed like a natural candidate as the Fisher-KPP speed when plugging $v \equiv 1$ in the $u$-equation, or equivalently as the linear speed of invasion around the unstable steady state $E_2= (0,1)$.

For added clarity, we sum up the previous paragraph in the following theorem:
\begin{theorem}[\cite{gl19}, Theorem~1.1]\label{thm:gl}
	Assume that \eqref{eq:hypothesis} holds true, and let $(u,v)$ be a solution of the homogeneous Lotka-Volterra competition system~\eqref{Ph}, with compactly supported initial data $(u_0,v_0)$ satisfying~\eqref{i-bd}.
	\begin{enumerate}[$(i)$]
		\item If $s_1^* > s_2^*$, then
				$$\lim_{t \to +\infty} \sup_{x \in \mathbb{R}} v(x,t) =0,$$
				$$\lim_{t\to +\infty} \left\{ \sup_{|x| \geq (s_1^* + \ep) t} |u(x,t)| + \sup_{|x| \leq (s_1^* -\ep) t} |u(x,t)-1| \right\}= 0,$$
				for any $\ep \in (0, s_1^*)$.
			\item If $s_2^* > s_1^*$, then there exists $s_{1,GL}^* \in \big[ 2 \sqrt{d_1 r_1 (1-a_2)}, s_1^* \big]$ which does not depend on the initial data such that
		$$\lim_{t\to +\infty} \left\{ \sup_{|x| \leq (s_{1,GL}^* -\ep) t} |v(x,t)| +  \sup_{(s_{1,GL}^* + \ep) t \leq|x| \leq (s_2^* -\ep) t} |v(x,t)-1|
	+ \sup_{|x| \geq (s_{2}^* +\ep) t} |v(x,t) | \right\}= 0,$$
	$$\lim_{t\to +\infty} \left\{ \sup_{|x| \leq (s_{1,GL}^* -\ep) t} |u(x,t)-1| + \sup_{|x| \geq (s_{1,GL}^* +\ep) t} |u(x,t)| \right\}= 0,$$
	for any $\ep \in \left(0, \frac{ \min \{ s_{1,GL}^* , s_2^* - s_{1,GL}^* \}}{2} \right)$.
	\end{enumerate} 
\end{theorem}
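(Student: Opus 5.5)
The plan is to argue entirely through the comparison principle for the competitive system, built on the monotone change of variables $(u,v)\mapsto(u,1-v)$. Under this change, pairs of upper-lower (respectively lower-upper) solutions bracket the solution. The estimates for $u$ and for $v$ are obtained separately and then combined. The symmetry $x\mapsto -x$ lets us treat only $x>0$.

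\textbf{Case $(i)$, $s_1^*>s_2^*$.}
\begin{enumerate}[1.]
\item The upper bound for $u$ is immediate. Since $u_t\le d_1u_{xx}+r_1u(1-u)$, comparison with the Fisher-KPP equation and the classical Aronson-Weinberger result give $u\to0$ uniformly in $|x|\ge(s_1^*+\ep)t$. In the same way, $v\to0$ uniformly in $|x|\ge(s_2^*+\ep)t$.
\item Next we show that $v$ dies out uniformly. In the region $|x|\ge (s_2^*+\ep)t$, $v$ is already small. In the region $|x|\le (s_2^*+\ep)t$, we use that $u$ spreads there. For $u$ we use $u_t\ge d_1u_{xx}+r_1u(1-a_2-u)$, which yields $u\ge 1-a_2-\ep$ in $|x|\le(2\sqrt{d_1r_1(1-a_2)}-\ep)t$. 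This is not enough to control the whole region, because $2\sqrt{d_1r_1(1-a_2)}$ may be smaller than $s_2^*$.
\item To close this gap, we construct a lower solution for $u$ as a compactly supported subsolution moving at a speed $c\in(s_2^*,s_1^*)$. It lives in the region ahead of the front of $v$, where $v$ is exponentially small. Concretely, the upper bound $v\le Ce^{-\lambda(x-(s_2^*+\ep/2)t)}$ makes the term $a_2v$ negligible there, so the KPP sub-solution $e^{-\mu (x-ct)}\sin(\cdot)$ grows. Hence $u$ is bounded below by a positive constant along $x=ct$.
\item We then fill in the region behind by Liouville-type arguments. The steady state $E_1$ attracts in the ODE under $b_1>1$, and any entire solution with $\inf u>0$ equals $E_1$; this can be shown with a Lyapunov function or with sub/super solutions converging to $E_1$. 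This gives $u\to1$ and $v\to0$ on $|x|\le (s_1^*-\ep)t$. Beyond $ct$, $v$ is already small.
\end{enumerate}

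\textbf{Case $(ii)$, $s_2^*>s_1^*$.}
\begin{enumerate}[1.]
\item First, $v$ spreads at $s_2^*$. Ahead of $(s_1^*+\ep)t$, $u$ is small, so the same moving-subsolution argument gives $v\to1$ on $(s_1^*+\ep)t\le|x|\le(s_2^*-\ep)t$.
\item The main obstacle is defining $s_{1,GL}^*$ and proving it is independent of the initial data. The invasion of $u$ into $E_2$ is \emph{not} linearly determined, because the tail of $v$ near $E_0$ ahead interacts with it. My approach is to consider $u$ entering the state $(0,1)$, where the decay rate of $1-v$ and of $u$ ahead is inherited from the front of $v$. I would compute the spreading speed by matching exponential rates. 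The candidate is $s_{1,GL}^*=$ the speed $c$ obtained by minimizing $d_1\mu+r_1(1-a_2)/\mu$ subject to $\mu\le$ a threshold decay rate $\lambda(c)$. That threshold comes from the rate at which $u$ can decay in the zone $[ct,s_2^*t]$ versus in the zone beyond $s_2^*t$ (where the growth rate is $r_1$). This is a Hamilton-Jacobi / large-deviation computation.
\item To make this rigorous, I would set $w(x,t)=-t^{-1}\ln u(tx,t)$. A WKB scaling gives a viscosity solution of $\partial_t w+d_1|\partial_xw|^2+r_1(1-a_2\mathbf 1_{x<s_2^*})=0$-type equations, with a discontinuous coefficient at $x=s_2^*$. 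Solving it explicitly through a variational formula would produce $s_{1,GL}^*$ as the free boundary of $\{w=0\}$, independent of the compactly supported data.
\item Inside $\{w=0\}$, the Liouville argument of case $(i)$ gives $u\to1$ and $v\to0$. Outside it, $u\to0$ exponentially, and then $v\to1$.
\item The bounds $2\sqrt{d_1r_1(1-a_2)}\le s_{1,GL}^*\le s_1^*$ follow from the two scalar comparisons.
\end{enumerate}
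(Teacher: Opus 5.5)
\textbf{The genuine gap is in case $(ii)$.} Your Hamilton--Jacobi equation, with coefficient $r_1(1-a_2\mathbf 1_{x<s_2^*})$, and your recipe of minimizing $d_1\mu+r_1(1-a_2)/\mu$ under a decay-rate constraint are exactly the linearized scalar problem \eqref{eq:shifting1}. What they produce is $s_{1,nlp}^*$ of Theorem~\ref{thm:nlp}. But $s_{1,GL}^*=\max\{s_{1,nlp}^*,s_{1,mtf}^*\}$ (Theorem~\ref{thm:gl_add}), where $s_{1,mtf}^*$ is the minimal speed of fronts of \eqref{Ph} connecting $(1,0)$ to $(0,1)$, and $s_{1,mtf}^*>s_{1,nlp}^*$ can occur (the pushed case). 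In that regime your step~3 is false. Indeed $u\to1$, so $u$ is not exponentially small, on $s_{1,nlp}^*t<x<s_{1,mtf}^*t$. Hence $-t^{-1}\ln u(tx,t)$ does not converge to the solution of your equation.

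The obstruction is structural. In the cooperative variables $(u,1-v)$ the $u$-reaction is $r_1u\,(1-a_2-u+a_2(1-v))$. This is not bounded above by its linearization $r_1(1-a_2)u$ at $(0,1)$, and the same failure occurs for the second component. So the KPP mechanism behind any WKB or large-deviation argument, namely that linear exponentials give supersolutions, breaks down.

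You have also misattributed the nonlinear determinacy. The interaction with the $v$-tail ahead is nonlocal pulling, which is a linear effect and is exactly what your computation captures. The genuinely nonlinear effect is the possibly pushed $(1,0)$--$(0,1)$ front. Your scheme can therefore give at best the lower bound $s_{1,nlp}^*$, which the paper obtains more directly by comparison with \eqref{eq:shifting1}.

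Two ingredients are missing. First, you need the lower bound $s_{1,mtf}^*$. The paper gets it from a lower-upper solution built on the minimal traveling front and its fast decay $e^{-\lambda^* z}$ (Proposition~\ref{prop:gl_subsol}). Second, you need an upper bound that is not linear. This is the Girardin--Lam upper-lower solution (Proposition~\ref{prop:gl_15}), which glues a front of speed at least $s_{1,mtf}^*$ to exponential solutions of the linearization near $(0,0)$ in the frame moving with speed $s_2^*$. It is this continuity matching that produces the maximum of the two speeds.

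\textbf{A smaller omission in case $(i)$.} Otherwise case $(i)$ follows the paper's route (compare Proposition~\ref{prop:u_strongfast1}). Proposition~\ref{prop:entire} needs $\inf u>0$ on all of $|x|\le (s_1^*-\ep)t$. You only have positivity along $x=ct$ and on $|x|\le(2\sqrt{d_1r_1(1-a_2)}-\ep)t$. You must propagate positivity backwards, for instance with a stationary compactly supported subsolution of $u_t=d_1u_{xx}+r_1u(1-a_2-\delta-u)$. This is legitimate because $v\le 1$ and $a_2<1$, and it is Step~2 of that proof.
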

Though the characterization of the spreading speed~$s_{1,GL}^*$ is also covered by Theorem~1.1 in~\cite{gl19}, 
here we postpone it to Theorem~\ref{thm:gl_add} below.\\

We can finally go back to the full climate change system~\eqref{P}. The question of large time spreading dynamics can be roughly broken down to the following: can each species keep pace with the climate change in order to survive? Surely, an intuitive answer at this stage should be to compare the climate change speed~$s$ with either $s_1^*$, $s_{1,GL}^*$ or $s_2^*$, depending on the parameter range and the investigated species. For instance, if $s_1^* < s_2^*$ and $s < s_{1,GL}^*$, one may expect both species to spread away from the climate change, and to recover similar dynamics as in Theorem~\ref{thm:gl}, with the obvious change that both species eventually die out behind the moving frame of the shifting heterogeneity. Our findings, which will be detailed below, are that, while we are able to confirm such heuristics in many cases, they may still fail in some parameter ranges due especially to nonlinear effects.

At last, the rest of this section will be dedicated to the statements of our main results. We will split those results into the two next subsections, dealing both with the strong-weak competition case where \eqref{eq:hypothesis} hold true, but respectively when $s_1^* > s_2^*$ and $s_2^* > s_1^*$. This disjunction of cases is motivated by the above discussion, which showed that they result into different spreading speeds even for solutions of the homogeneous system. 




\subsection{The case of a both stronger and faster competitor}\label{sec:strongfast}

Let us first assume that $s_1^* > s_2^*$, which turns out to be the simpler case. This means that $u$ is not only stronger, according to \eqref{eq:hypothesis}, but also faster. This suggests that $v$ will always lose to $u$, and the question mostly reduces to whether $u$ is able to outpace the climate change. This is answered in the following theorem, which is our first main result.

\begin{theorem}\label{thm:1_strongfast}
	Assume that $a_2 < 1 < b_1$, $s_1^* > s_2^*$, and let $(u,v)$ be the solution of \eqref{P}-\eqref{i-bd} such that the supports of $u_0$ and $b_0$ are both bounded from above.
	\begin{enumerate}[$(i)$]
		\item If $s \geq s_1^*$, then
					$$\lim_{ t \to +\infty } u(x,t) = \lim_{t \to \infty} v(x,t) = 0,$$
			uniformly with respect to $x \in \mathbb{R}$.
		\item If $s \in [s_2^*, s_1^*)$, then
				$$\lim_{t \to +\infty} v(x,t) = 0,$$
			uniformly with respect to $x \in \mathbb{R}$, and
				$$\lim_{t \to +\infty} \left\{\sup_{x\le (s - \ep)t} | u(x,t)| + \sup_{x\ge (s_1^*+\ep)t} |u(x,t)| \right\} = 0,$$ 
				$$\lim_{t \to +\infty} \left\{ \sup_{(s+ \ep) t \leq x \leq (s_1^* -\ep)t} | u(x,t)-1| \right\} = 0 ,$$
				for any $\ep  \in \left( 0, \frac{s_1^* - s }{2} \right)$.
		\item If $s \in [0,  s_2^* ) $, then
		$$\lim_{t \to +\infty} \left\{\sup_{x\le (s - \ep)t} |v(x,t)| + \sup_{x\ge (s +\ep)t} |v (x,t)| \right\} = 0,$$ 
		and
		$$\lim_{t \to +\infty} \left\{ \sup_{x\le (s -\ep)t} | u(x,t)| + \sup_{x\ge (s_1^*+\ep)t} | u(x,t) | \right\} = 0,$$ 
		$$\lim_{t \to +\infty} \left\{ \sup_{(s+ \ep) t \leq x \leq (s_1^* -\ep)t} | u(x,t)-1| \right\} = 0 ,$$
		for any $\ep  \in \left( 0, \frac{s_1^* - s }{2} \right)$.
	\end{enumerate}
\end{theorem}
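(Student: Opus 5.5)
The plan is to rely throughout on scalar comparison, exploiting the monotone structure of the competition system: $u$ is a subsolution of the scalar equation $w_t=d_1w_{xx}+r_1w(\alpha_1(x-st)-w)$, and likewise $v$ is a subsolution of the corresponding scalar equation with $d_2,r_2,\alpha_2$. For these scalar shifting-environment Fisher-KPP equations, the known results recalled in the introduction (e.g.\ \cite{bdnz09,lbsf14}) give the following picture for data bounded from above. The solution converges uniformly to zero when $s\ge s_i^*$. When $s<s_i^*$, it converges to zero on $x\le (s-\ep)t$ and on $x\ge (s_i^*+\ep)t$, and to $1$ in between.

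If some needed statement is not directly available, I would reprove it with explicit barriers. For the upper bounds I would take $\min\{1, Ce^{-\lambda(x-ct)}\}$-type supersolutions; since $\alpha_i\le \max\{1,\sup\alpha_i\}$, one can first reduce to the region where the growth rate is at most $1$ by a comparison with the bound $u,v\le1$. For the zone behind $st$ I would use the fact that $\alpha_i(-\infty)<0$: a supersolution of the form $\epsilon$-small plus a decaying exponential in the moving frame shows decay on $x\le(s-\ep)t$. In the critical case $s=s_i^*$, I expect the borderline to require the standard argument with a supersolution $(x-st)e^{-\lambda^*(x-st)}$ in the moving frame, combined with the negative growth behind.

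With these scalar bounds, part (i) follows immediately, since $s\ge s_1^*>s_2^*$. In part (ii), $s\ge s_2^*$ gives $v\to0$ uniformly. The upper estimates on $u$ in (ii) and (iii) also come directly from the scalar bound with $i=1$.

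The remaining task is the lower bound $u\to1$ on $(s+\ep)t\le x\le (s_1^*-\ep)t$, together with $v\to0$ ahead of $st$ in case (iii). For the lower bound on $u$, I would use that $v\le \bar v$, the scalar upper bound, which is nonzero only for $x\lesssim (s_2^*+\ep)t$. Since $s_1^*>s_2^*$, I would first obtain spreading of $u$ at speed $s_1^*-\ep$ ahead of $v$. The region $(s_2^*+\ep)t\le x\le (s_1^*-\ep)t$ is nonempty for small $\ep$. To get it, I would place a compactly supported subsolution $\delta\phi(x-ct)$ moving at speed $c\in(s_2^*,s_1^*)$, based on the principal eigenfunction of $d_1\partial_{xx}+c\partial_x+r_1(1-\eta)$ on a large interval. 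On that interval $\alpha_1\ge1-\eta/2$ and $v\le\eta/(2a_2)$ for large times, and I would launch it from the positivity of $u$ at a later time.

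Once $u$ is near $1$ on an expanding zone, I would make this invade backward toward $st$. The strong-weak structure is what allows this: in the frame $x-ct$ with $c\in(s+\ep,s_2^*)$, the linearisation at $(1,0)$ gives $v$ growth rate $r_2(1-b_1)<0$, so $u$ remains positive near the state $(1,0)$. Concretely, I would construct an upper-lower pair $(\underline u,\bar v)$, using the comparison principle of Section~\ref{sec:comparison}. Here $\underline u$ is a subsolution front of the form $1-\eta$ times a cutoff, moving leftward relative to the $v$-front down to speed $s+\ep$, and $\bar v$ is a supersolution. The key mechanism is that the $u$-equation is KPP-like even against $v\le1$ because $a_2<1$: its speed of invasion into $(0,1)$ is at least $2\sqrt{d_1r_1(1-a_2)}>0$. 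A simpler route is available here because the theorem only claims convergence on $x\ge(s+\ep)t$. I would use a bistable-in-$v$ argument: whenever $u\ge1-\eta$ with $b_1(1-\eta)>1$, $v$ decays exponentially. Then a Lyapunov/sliding argument with a compactly supported stationary subsolution in every frame of speed $c\in[s+\ep,s_1^*-\ep]$ spreads $u$. Such a subsolution exists because $c<s_1^*$ is not needed once $v$ is small, and when $v$ is not small, the term $(1-a_2)$ gives room provided $c<2\sqrt{d_1r_1(1-a_2)}$.

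I expect the main obstacle to be this backward invasion. The issue arises when $s\ge 2\sqrt{d_1r_1(1-a_2)}$ in case (iii), where the eigenfunction subsolution fails against $v\approx1$. There I would instead argue by contradiction using a limiting entire solution in the frame $x=ct$. Both components would be bounded and time-global, and along a subsequence $u$ would have a positive infimum near the right edge. The key step is to show that $\liminf u>0$ forces $v\to0$, and this then lets $u$ reach $1$ via the scalar KPP result at speed up to $s_1^*$. Finally, $v\to0$ on $x\ge(s+\ep)t$ follows from $u\to1$ there, via the ODE comparison $v_t\le d_2v_{xx}+r_2v(1-b_1(1-\eta)-v)$, together with decay beyond $(s_2^*+\ep)t$.
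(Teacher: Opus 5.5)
Your treatment of (i), (ii) and the upper bounds on $u$ via scalar comparison is essentially the paper's. So is your first step for the lower bound: a small compactly supported subsolution riding a frame $c\in(s_2^*,s_1^*)$, where $v\to0$ and $\alpha_1\to1$.

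\textbf{Main gap.} The gap is the step you yourself flag: getting $\liminf u>0$ on the band $(s+\varepsilon)t\le x\le(s_2^*+\varepsilon)t$, where $v$ may be close to $1$. You only try subsolutions that travel with a frame $c\in[s+\varepsilon,s_1^*-\varepsilon]$. Against $v\approx1$ such subsolutions require $c<2\sqrt{d_1r_1(1-a_2)}$. This obstruction is not confined to $s\ge2\sqrt{d_1r_1(1-a_2)}$: the frames $c\in[2\sqrt{d_1r_1(1-a_2)},s_2^*]$ are uncovered for every $s$.

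Your fallback does not repair this. In a frame $x=ct$ with $c<s_2^*$, the region where $u$ is already known to be positive moves at speed $>s_2^*$ and escapes to $+\infty$. So the limiting entire solution inherits no positive lower bound, and the argument is circular.

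The missing idea is that $u$ does not need to travel, only to \emph{persist}. Take $(x_n,t_n)$ in the band, fix $c\in(\max\{s_2^*,s_1^*-\varepsilon\},s_1^*)$, and set $t_n'=x_n/c<t_n$. At time $t_n'$ the fast frame sits at $x_n$, so $u(\cdot,t_n')$ is bounded below on a long interval around $x_n$. Because $a_2<1$, the stationary (lab-frame, speed $0$) function $\eta_2\cos(\pi(x-x_n)/(2L_2))$ is a subsolution of $w_t=d_1w_{xx}+r_1w(1-a_2-\delta-w)$. It is therefore a subsolution of the $u$-equation of \eqref{P} for $t\in[t_n',t_n]$ and $|x-x_n|<L_2$. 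This holds because $v\le1$ there, and $x-st\ge\varepsilon t_n-L_2$ forces $\alpha_1\ge1-\delta$. Hence $u(x_n,t_n)\ge\eta_2$ uniformly, with no condition relating $s$ to $2\sqrt{d_1r_1(1-a_2)}$.

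\textbf{Second gap.} You state that ``$\liminf u>0$ forces $v\to0$'' but do not prove it. A lower bound $u\ge\delta$ with $b_1\delta<1$ does not make the $v$-growth rate negative. Your bistable-in-$v$ comparison needs $u\ge1-\eta$, which itself needs $v$ small, so as written this is circular. The paper closes it with a Liouville theorem for entire solutions of \eqref{Ph} with $\inf u>0$ (Proposition~\ref{prop:entire}), built on the Lyapunov function $u-1-\ln u+\sigma v$. An alternative on the limiting entire solution is to iterate ODE comparisons $\bar v_{k+1}=\max\{0,1-b_1(1-a_2\bar v_k)\}$ with $\bar v_0=1$, which reaches $0$ in finitely many steps when $a_2<1<b_1$. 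Some such argument must be supplied.

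\textbf{Minor point.} For $s=s_1^*$ in (i), a supersolution of the form $(x-st)e^{-\lambda(x-st)}$, stationary in the moving frame, only bounds $u$; it does not give uniform decay to $0$. The paper uses the Gaussian bound $u(x,t)\le e^{r_1t}(4\pi d_1t)^{-1/2}\int_{\mathbb{R}}e^{-(x-y)^2/(4d_1t)}u_0(y)\,dy$. Its $t^{-1/2}$ factor yields decay on $x\ge s_1^*t-X$, which combined with the decay behind the climate front gives the uniform limit.
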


Statement~$(i)$ simply states that, when the climate change is faster than both species, then both go to extinction in large time. The fact that extinction still occurs when $s = s_1^*$ comes from the logistic form of the growth rates, or in other words from the intra-species competition, which entails that (when the initial support is bounded from above) the solution slightly lags behind its asymptotic spreading speed even under the optimal conditions of no climate change and competitor.

According to statements~$(ii)$ and~$(iii)$, when $s < s_1^*$, then the species $u$ manages to spread ahead of the climate change, with its original speed $s_1^*$. On the other hand, $v$ is (mostly) driven to extinction. Notice that, while the cases $s< s_2^*$ and $s\in [s_2^*, s_1^*)$ may look similar, there is actually a small difference, which is that in the latter case, $v$ may still survive in the moving frame with speed $s$. This is because, under our assumptions, climate change does not have the same effect on the two species $u$ and $v$. Notice indeed that the functions~$\alpha_1$ and~$\alpha_2$ are a priori different. In particular, $v$ may be more resilient and may persist in a zone where~$u$ is already dying out due to climate change. Otherwise, $v$ may converge uniformly to~$0$ as $t \to \infty$ even if $s < s_2^*$. We will not explore further this dichotomy here, but we will uncover a similar phenomenon in the next subsection, where the influence of $\alpha_1,\alpha_2$ will be even more striking.


\subsection{The case of a stronger but slower competitor}\label{sec:strongslow}

In this subsection, we will assume that \eqref{eq:hypothesis} holds, that is $u$ is stronger at least without diffusion, yet $u$ has a slower spreading speed than~$v$, i.e.
$$s_1^* < s_2^* .$$
This situation will turn out to be much more complicated than when $u$ is faster. Before even going into our next main results, several important points need to be raised, and new notation needs to be introduced.\medskip

According to Theorem~\ref{thm:gl} and previous heuristics, the extinction or survival of the weak competitor~$v$ should be determined by whether $s > s_2^*$ or $s < s_2^*$. In the latter case, the stronger competitor~$u$, provided that it also outpaces the climate change, should eventually replace~$v$. Our knowledge of the homogeneous system~\eqref{Ph} also suggests that the invasion front of~$u$, connecting $E_1 = (1,0)$ and $E_2= (0,1)$, should move at the speed~$s_{1,GL}^*$, which we kept somewhat elusive so far. While we again refer to~\cite{gl19} for the full details and to Section~\ref{sec:prelim_gl} below for further comments, there are however two observations that need to be made here. 

First, under~\eqref{eq:hypothesis}, there indeed exist (a family of) traveling fronts of~\eqref{Ph} connecting the stable steady state $E_1 = (1,0)$ to the unstable steady state $E_2 = (0,1)$; see e.g.~\cite{k97}. Yet the minimal traveling front speed may not be linearly determined. By that, it is meant that the minimal traveling front speed may be strictly larger than $2 \sqrt{d_1 r_1 (1-a_2)}$, which is the speed associated with the linearized problem around the invaded state~$(0,1)$. In such a situation, and this will indeed be the case, we expect that also $s_{1,GL}^* > 2 \sqrt{ d_1 r_1 (1-a_2)}$. This is in contrast with the standard Fisher-KPP scalar equation, of which \eqref{Ph} may appear to be a generalization. This comes from the fact that, when putting system~\eqref{Ph} in a cooperative form by rewriting the system in terms of $(u,1-v)$, the sign of some nonlinear terms changes. 
{For more details on the issue of linear determinacy, we refer first to~\cite{h98,llw02,wll02} where it was introduced and studied in the context of systems. Since then, various sufficient or necessary conditions for linear determinacy have been found~\cite{h10,gl11}; see also~\cite{ao19} and the references therein for recent developments.
}

Second, a spreading front may be accelerated by the presence of a more favorable zone, even if this favorable zone moves further away. Indeed, in a monostable system, there are infinitely many traveling front speeds, among which the asymptotic spreading speed is selected depending on the initial distribution and its spatial decay. Usually, compactly supported initial data will naturally select the minimal speed. Yet, it may occur in the heterogeneous case that the spatial decay of the solution is modified by a far away favorable zone. This in turns modifies the asymptotic spreading speed and makes it strictly larger than the minimal traveling front speed. Due to the absence of the competitor~$v$ far away to the right, the slower species~$u$ may also go through this phenomenon, which led Girardin and Lam~\cite{gl19} to introduce the notion of ``nonlocal pulling''. We also refer to~\cite{hs14} for an earlier uncover, and to \cite{fgh22,GGM,ly22} for recent developments in the context of heterogeneous equations.\medskip

One may think that these difficulties could be swept under the rug here, using Theorem~\ref{thm:gl} and $s_{1,GL}^*$ as some sort of black box when dealing with the inclusion of climate change. This is actually not the case as these underlying features of~\eqref{Ph} also have an impact on the heterogeneous problem~\eqref{P}.

Indeed, the possibly nonlinear nature of the spreading speed $s_{1,GL}^*$ implies, very roughly, that a smaller population of~$u$ may not be as fast as a large one. In the homogeneous case, this is a rather meaningless observation as the spreading speed only appears in the large time behavior, where an initially small population will approach its maximal capacity anyway. However, here the climate change may prevent that, and in fact in the scalar monostable case~\cite{bg19} it has been shown that extinction and survival of a species may be dependent on the initial data. This is precisely due to the nonlinear determinacy of the minimal front speed. Naturally, the competition system will be prone to the same phenomenon.

This leads us to define yet another speed, roughly meant to represent the speed of a small population of individuals of species~$u$ in presence of~$v$. To do this, and keeping in mind the possibility of nonlocal pulling, we introduce the following heterogeneous scalar equation:
\begin{equation}\label{eq:shifting1}
	u_t = d_1 u_{xx} + r_1 u [ 1 - u - a_2 H (s_2^* t -x) ],
\end{equation}
where $H$ denotes the Heaviside step function, i.e. $H(\cdot)$ outputs $1$ on the positive half-line and $0$ on the negative half-line. Notice that we kept the intra-species competition term, making this equation nonlinear but of the Fisher-KPP type. This is merely for convenience since, though the linear equation is simpler, the Fisher-KPP type equation shares the same minimal speed. It is in this context that most results in the literature are already stated (including~\cite{ly22} whose results we will invoke below). In particular, it is known that the solutions of~\eqref{eq:shifting1} with initially compact support spread with some speed, which we denote by $s_{1,nlp}^*$ where $nlp$ stands for ``nonlocally pulled''. That is,
$$\limsup_{x \geq (s_{1,nlp}^* + \ep) t } \sup u(x,t)  = 0,$$
$$\limsup_{0 \leq x \leq (s_{1,nlp}^* - \ep) t } \sup |u(x,t) -1|= 0,$$
for any $\ep \in \left(0,s_{1,nlp}^* \right)$. More details, along with an explicit formula for $s_{1,nlp}^*$, will be given in Theorem~\ref{thm:nlp} below. At this stage, we merely point out that $$ 2 \sqrt{d_1 r_1 (1-a_2)} \leq s_{1,nlp}^* \leq s_{1,GL}^* ,$$
where both inequalities may or may not be equalities depending on parameters. \\

We are finally in a position to state our second main result.
\begin{theorem}\label{thm:2_strongslow}
Assume that $a_2 < 1 < b_1$, $s_2^* > s_1^*$, and let $(u,v)$ be the solution of \eqref{P}-\eqref{i-bd} such that the supports of $u_0$ and $v_0$ are both bounded from above.
\begin{enumerate}[$(i)$]
	\item If $s \geq s_2^*$, then
	$$\lim_{ t \to + \infty } u(x,t) = \lim_{t \to +\infty} v(x,t) = 0,$$
	uniformly with respect to $x \in \mathbb{R}$.
	\item If $s \in [s_1^*, s_2^*)$, then
	$$\lim_{t \to +\infty} u (x,t) = 0,$$
	uniformly with respect to $x \in \mathbb{R}$, and
	$$\lim_{t \to +\infty} \left\{\sup_{x\le (s - \ep)t} |v(x,t)| + \sup_{x\ge (s_2^*+\ep)t} | v (x,t)| \right\} = 0,$$ 
	$$\lim_{t \to +\infty} \left\{ \sup_{(s+ \ep) t \leq x \leq (s_2^* -\ep)t} | v (x,t)-1| \right\} = 0 ,$$
	for any $\ep  \in \left( 0, \frac{s_2^* - s }{2} \right)$.
	\item If $s \in (s_{1,GL}^*, s_1^*)$, then
	$$\lim_{t \to +\infty} \left\{ \sup_{x \leq (s -\ep) t} |u(x,t)| + \sup_{x \geq (s+ \ep)t } |u(x,t)| \right\} = 0,$$
	and
	$$\lim_{t \to +\infty} \left\{\sup_{x\le (s - \ep)t} |v(x,t)| + \sup_{x\ge (s_2^*+\ep)t} | v (x,t)| \right\} = 0,$$ 
	$$\lim_{t \to +\infty} \left\{ \sup_{(s+ \ep) t \leq x \leq (s_2^* -\ep)t} | v (x,t)-1| \right\} = 0 ,$$
	for any $\ep  \in \left( 0, \frac{s_2^* - s }{2} \right)$.
	\item If $s \in  [0,  s_{1,nlp}^* ) $, then
	$$\lim_{t \to +\infty} \left\{\sup_{x\le (s - \ep)t} | v(x,t)| + \sup_{(s + \ep)t \le x \le ( s_{1,GL}^* - \ep ) t} |v(x,t)| + \sup_{x\ge (s_2^* +\ep)t} |v (x,t)| \right\} = 0,$$ 
	$$\lim_{t \to +\infty} \left\{\sup_{ (s_{1,GL}^* + \ep ) t \le  x\le (s_2^* - \ep)t} | v (x,t) -1| \right\} = 0,$$ 
	for any $\ep \in \left(0 , \min \left\{\frac{s_{1,GL}^* - s}{2}, \frac{s_2^* - s_{1,GL}^* }{2} \right\} \right)$, and
	$$\lim_{t \to +\infty} \left\{ \sup_{x\le (s -\ep)t} | u(x,t)| + \sup_{x\ge (s_{1,GL}^*+\ep)t} | u(x,t)| \right\} = 0,$$ 
	$$\lim_{t \to +\infty} \left\{ \sup_{(s+ \ep) t \leq x \leq (s_{1,GL}^* -\ep)t} | u(x,t)-1| \right\} = 0 ,$$
	for any $\ep  \in \left( 0, \frac{s_{1,GL}^* - s }{2} \right)$.
\end{enumerate}
\end{theorem}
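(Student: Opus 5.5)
We argue case by case, relying on the comparison principle for the competitive system, i.e. monotonicity with respect to the order $u\uparrow$, $v\downarrow$, as recalled in Section~\ref{sec:comparison}. Each statement is obtained by sandwiching $(u,v)$ between an upper-lower pair $(\overline u,\underline v)$ and a lower-upper pair $(\underline u,\overline v)$.

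\textbf{Extinction parts.} For $(i)$ and $(ii)$ we discard the competitor. Thus $u$ is a subsolution of the scalar shifting equation $w_t=d_1w_{xx}+r_1w(\alpha_1(x-st)-w)$, and $v$ is a subsolution of the analogous equation with $d_2,r_2,\alpha_2$. For this scalar KPP problem with $\alpha(+\infty)=1$, $\alpha(-\infty)<0$ and data bounded from above, extinction holds when $s\ge 2\sqrt{dr}$. For $s>2\sqrt{dr}$ we use the supersolution $Ce^{-\lambda(x-ct)}$ with $c<s$ and $\lambda$ close to $\sqrt{r/d}$, combined with the negativity of $\alpha$ behind the shift. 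The critical case $s=2\sqrt{dr}$ needs a logistic (Bramson-type) lag supersolution, for instance one of the form $\min\{1,C(x-st+L)e^{-\lambda^*(x-st)}\}$ corrected near the front. This gives $(i)$ and the $u$-part of $(ii)$, since $s\ge s_1^*$. The same scalar argument, applied once $u\to0$ uniformly, also gives the upper bounds for $v$ ahead of $s_2^*t$ in all cases. Behind $(s-\ep)t$, both species vanish by the negativity of $\alpha_i$ there. We use the supersolution $\min\{1,Ce^{\mu(x-(s-\ep/2)t)}\}$: the diffusion and drift terms are dominated by $-r_i\alpha_i(-\infty)$ once $\mu$ is small.

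\textbf{Persistence parts.} In $(ii)$, $u\to0$ uniformly, so the $v$-equation is asymptotically a scalar KPP equation shifting at speed $s<s_2^*$. A compactly supported lower solution in the moving frame of speed $c\in(s,s_2^*)$, taken from the eigenfunction of $d_2\partial_{xx}+c\partial_x+r_2(1-\delta)$ on a large interval, gives spreading of $v$ between $(s+\ep)t$ and $(s_2^*-\ep)t$. In $(iii)$, $s\in(s_{1,GL}^*,s_1^*)$. An upper bound for $u$ comes from comparing $(u,v)$ with the homogeneous system~\eqref{Ph}, with $\alpha_i$ replaced by $1$ (the correct direction, using $\alpha_i\le 1$ after a harmless normalization, or a translate). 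The solution of~\eqref{Ph} has its $u$-front at $s_{1,GL}^*t<st$ by Theorem~\ref{thm:gl}. More precisely, we take the upper-lower pair built in~\cite{gl19}, which is a $u$-supersolution moving at speed $s_{1,GL}^*+\ep$ with $v$ held low behind and $v\approx1$ ahead. We glue it to the climate-change extinction region, so that $u$ is confined to a region $x\le (s_{1,GL}^*+\ep)t$ where either $v\approx1$ (ahead of $st$) or $\alpha_1<0$. It follows that $u\to0$ outside any $\ep t$-neighborhood of $st$. Once $u$ is small ahead of $st$, $v$ follows the scalar argument of $(ii)$.

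\textbf{Case $(iv)$: the main obstacle.} Here $s<s_{1,nlp}^*$, and we must show that even a small $u$ population escapes the shift. $v$ first spreads at $s_2^*$ and invades ahead, so near $st$ the species $u$ competes against $v\approx1$. Our lower bound is as follows: $u$ is a supersolution of $u_t\ge d_1u_{xx}+r_1u(\alpha_1(x-st)-u-a_2\overline v)$, where $\overline v\le \min\{1,\text{KPP front at }(s_2^*+\delta)t\}$. Approximately, this is the equation~\eqref{eq:shifting1}, with an extra shifting bad zone at speed $s$. We then use the nonlocally pulled construction of~\cite{ly22} (Theorem~\ref{thm:nlp}) to produce a compactly supported-in-frame subsolution moving at a speed $c\in(s,s_{1,nlp}^*)$. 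This subsolution exploits the exponential tail towards the $v$-free region beyond $s_2^*t$, and it must be modified near $st$, where $\alpha_1<1$, by truncating at the level $x=(s+\ep)t$. The delicate point is that the subsolution must be positive from time $t_0$ onward, starting from an arbitrarily small $u(\cdot,t_0)$. This works precisely because $s_{1,nlp}^*$ is the speed achievable by small data. Once $u$ is bounded below by a positive constant on $[(s+\ep)t,(s_{1,nlp}^*-\ep)t]$, we restart the comparison from that time using the Girardin--Lam upper-lower and lower-upper solutions of~\eqref{Ph}, shifted to start at $st$. With $u$ large, these yield the front at $s_{1,GL}^*$ and the convergences $u\to1$, $v\to0$ behind it and $v\to1$ between $s_{1,GL}^*t$ and $s_2^*t$. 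The matching upper bound at $s_{1,GL}^*+\ep$ follows from comparison with~\eqref{Ph} as in $(iii)$. We expect the main difficulty to be the gluing of the~\cite{ly22} subsolution with the climate-change boundary and with a $v$-upper bound that is only asymptotic; we would handle it with time-dependent error terms $\delta e^{-\kappa t}$.
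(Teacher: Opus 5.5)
Your treatment of $(i)$, $(ii)$ and the lower bound in $(iv)$ broadly follows the paper's structure. However, the upper bound on $u$ in $(iii)$, and the ``matching upper bound'' in $(iv)$, fail as written.

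Comparing $(u,v)$ with \eqref{Ph} is \emph{not} the correct direction. An upper bound on $u$ in the competitive order needs $v$ to be a supersolution of the comparison $v$-equation, whereas $\alpha_2\le 1$ makes $v$ a subsolution: climate change depresses $v$, which helps $u$. Concretely, a Girardin--Lam pair $(\overline u,\underline v)$ with front at $(s_{1,GL}^*+\ep)t<st$ requires $v\ge\underline v\approx 1-2\delta$ on $((s_{1,GL}^*+\ep)t,st)$. There, in fact, $v\to 0$ because $\alpha_2<0$.

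Your intermediate claim that $u$ is confined to $x\le(s_{1,GL}^*+\ep)t$ is even false in general. If $\alpha_1>0>\alpha_2$ on a long interval of the moving frame, $u$ persists near $x=st$, which is why $(iii)$ excludes a neighborhood of $st$.

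The missing idea is that of Lemma~\ref{lem:lessthanGL}. Proposition~\ref{prop:gl_15} allows any front speed $s_1\in(s_{1,GL}^*,s_1^*)$, so take $s_1^\delta>\max\{s,s_{1,GL}^*\}+\ep$ and build the pair for the perturbed system \eqref{p-}. Then compare only on $\{t\ge T,\ x\ge (s+\ep)t\}$, where $\alpha_2\ge 1-\delta$ makes $(u,v)$ a lower-upper solution. The lateral boundary condition is then free, since $\overline u_\delta=1$ and $\underline v_\delta=0$ there. The ordering at $t=T$, however, needs real work:
\begin{itemize}
\item the a priori bound $u\le\min\{1,e^{-\lambda(s_2^*-s_1)t}e^{-\mu_\delta(x-s_2^*t)}\}$ (a supersolution with $\alpha_1\equiv1$, $v\equiv0$), to put $u(\cdot,T)$ below $\overline u_\delta(\cdot-X,0)$;
\item Lemma~\ref{subcase1_lemma1}, to put the compactly supported $\underline v_\delta(\cdot-X,0)$ below $v(\cdot,T)$;
\item a shift $X$ meeting all these constraints once $T$ is large.
\end{itemize}
None of this appears in your plan.

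In $(iv)$, two further ingredients are missing.
\begin{itemize}
\item ``$u$ bounded below by a positive constant'' says nothing about $v$. Yet the restart needs $u\approx1$ and $v\approx0$ on $x=(s+\ep)t$, and the final convergences $u\to1$, $v\to0$ need the same upgrade. This is the Liouville result for the strong--weak system, Proposition~\ref{prop:entire}, obtained from the Lyapunov function $u-1-\ln u+\sigma v$.
\item In the pushed case $s_{1,GL}^*=s_{1,mtf}^*>s_{1,nlp}^*$, Girardin--Lam provide no ready-made lower-upper solution; that lower bound comes from the abstract approach of \cite{llw02,wll02}. The paper has to construct one, in Proposition~\ref{prop:gl_subsol}, from the minimal front and its fast decay $e^{-\lambda^*z}$, for the perturbed system \eqref{p+}.
\end{itemize}

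Two smaller points. First, $\min\{1,Ce^{\mu(x-(s-\ep/2)t)}\}$ does not lie above initial data that fail to decay at $-\infty$, and the supports are only bounded from above; the paper adds an $e^{-\delta t}$ term. Second, at $s=s_1^*$ your profile $C(z+L)e^{-\lambda^*z}$ is an exact stationary solution of the linearization in the moving frame, so it gives no decay by itself. The paper instead uses the explicit bound for $u_t\le d_1u_{xx}+r_1u$, i.e. $e^{r_1t}$ times the heat semigroup applied to $u_0$, whose $t^{-1/2}$ factor settles the critical case directly.
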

As in Theorem~\ref{thm:1_strongfast}, statement~$(i)$ merely states that, if the climate change is faster than both species, then they go to extinction as $t \to +\infty$. Next, statement~$(ii)$ confirms that, when the climate change is faster than the stronger but slower species~$u$, then it benefits to the weaker but faster species~$v$. Thus the latter manages to persist on its own. It even spreads with its natural speed $s_2^*$ corresponding to the situation when~$u$ is absent.

Statements~$(iii)$ and $(iv)$ are more intricate. In both cases, the climate change is slower than each species taken separately. In particular, the fast species~$v$ manages to outrace both the climate change and its competitor and persists in some intermediate moving frames. Then, in the case of statement~$(iii)$, the climate change is faster than the spreading speed of~$u$ in an environment populated by~$v$, and therefore it eventually drives~$u$ to extinction (except possibly in the moving frame with the exact speed~$s$). On the other hand, in the case of statement~$(iv)$, the stronger competitor~$u$ is always faster than the climate change. This is regardless of the initial data because, as explained above, $s_{1,nlp}^*$ is connected to the linearized problem and can be intuitively understood as the minimal speed of an arbitrarily small population of the species~$u$. Therefore, it manages to spread and eventually replaces the weaker competitor~$v$. The interface between the habitats of~$u$ and~$v$ moves at the same speed $s_{1,GL}^*$ as in the homogeneous competition system.

Theorem~\ref{thm:2_strongslow} leaves open the situation when $s \in ( s_{1,nlp}^*, s_{1,GL}^*)$, which we recall may or may not be empty depending on parameters. The reason is that, in this case, the climate change is too fast for a small population of $u$, but not necessarily for a large one, especially if on the other hand $v$ is small. Therefore, the outcome depends on the initial data in a non trivial way and there cannot be a general result as in all the previous cases. Moreover, the climate change functions also play a role.

The following result highlights this difficulty by showing that both spreading and extinction of $u$ may occur in this range.
\begin{proposition}\label{prop:diff_outcomes}
	Assume that $a_2 < 1 < b_1$ and $s_2^* > s_1^*$. Let $s \in (s_{1,nlp}^{*}, s_{1,GL}^*)$. Then:
	\begin{itemize}
		\item there exist $\alpha_1$, $\alpha_2$ and some compactly supported initial data $(u_0,v_0)$ such that
		$$\lim_{t \to \infty}\left\{\sup_{(s+\varepsilon) t \leq x \leq (s_2^* - \varepsilon) t} \Big[ | u(x,t)| + |v(x,t)-1| \Big] \right\} = 0,$$
		for any $\varepsilon \in \left(0, \frac{s_2^*-s}{2} \right)$;
		\item there exist $\alpha_1$, $\alpha_2$ and some compactly supported initial data $(u_0,v_0)$ such that $u$ spreads with speed $s_{1,GL}^*$, in the sense that
			$$\lim_{t \to \infty}\left\{\sup_{(s+\varepsilon) t \leq x \leq (s_{1,GL}^* - \varepsilon) t} \Big[ | u(x,t) - 1| + |v(x,t)| \Big] \right\} = 0,$$
			$$\lim_{t \to \infty}\left\{\sup_{(s_{1,GL}^* +\varepsilon) t \leq x \leq (s_{2}^* - \varepsilon) t} \Big[ | u(x,t) | + |v(x,t) -1| \Big] \right\} = 0,$$
	for any $\ep \in \left(0 , \min \left\{\frac{s_{1,GL}^* - s}{2}, \frac{s_2^* - s_{1,GL}^* }{2} \right\} \right)$.
	\end{itemize}
\end{proposition}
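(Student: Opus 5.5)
Both outcomes come from the freedom to choose $\alpha_1,\alpha_2$ and $(u_0,v_0)$. We exploit that in the window $(s_{1,nlp}^*,s_{1,GL}^*)$ a small population of $u$ living inside an environment saturated by $v$ is slower than the climate change. On the other hand, a large population of $u$ that escapes ahead before $v$ settles can reach the front of speed $s_{1,GL}^*$.

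\emph{Extinction of $u$.} Fix any admissible $\alpha_1,\alpha_2$ and compactly supported $v_0$. We take $u_0$ supported far to the left, say in $[-2L,-L]$, where $\alpha_1<0$. We also take $u_0$ small, while $v_0$ is supported near $0$. Since $s<s_2^*$, the component $v$ spreads ahead of the climate change, regardless of $u$ as long as $u$ stays small. Then $u$, which is confined initially behind the shift, only ever sees a region already filled by $v$ near $1$ (or the hostile region $\alpha_1<0$).

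We then build a supersolution for $u$ of the form $\bar u(x,t)=\min\{1,\ \delta e^{-\lambda(x-s't)}\phi\}$, with $s'\in(s_{1,nlp}^*,s)$. Here $\phi$ takes care of the zone behind the shift, where $\alpha_1<0$ gives decay. Ahead of the shift we use the linear bound $u_t\le d_1u_{xx}+r_1u(1-a_2v)$ together with $v\ge 1-\eta$ on $[st,(s_2^*-\ep)t]$. Far ahead, $u$ is zero initially, and the nonlocal pulling speed $s_{1,nlp}^*$ is exactly the speed of the Heaviside-shifted linear problem~\eqref{eq:shifting1}. So we use, as a comparison function, the supersolution of~\eqref{eq:shifting1} from Theorem~\ref{thm:nlp} (or the explicit exponential supersolutions of \cite{ly22}), translated to account for the time $T$ needed for $v$ to become $\ge 1-\eta$ on the relevant set. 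This gives $u\to0$ on $x\ge (s+\ep)t$. Since $s>s_{1,nlp}^*$, the climate change overtakes $u$, and behind it $u$ decays. Once $u\to0$ uniformly, $v$ solves asymptotically a scalar KPP equation in a shifting habitat with $s<s_2^*$, and the convergence $v\to1$ on $[(s+\ep)t,(s_2^*-\ep)t]$ follows as in Theorem~\ref{thm:2_strongslow}$(ii)$.

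\emph{Spreading of $u$.} Choose $\alpha_2$ very negative behind the shift, or more simply $v_0$ tiny and supported to the left, and $u_0\equiv 1$ on a long interval $[0,R]$ ahead of the shift. The idea is that $u$ invades first and builds a large state. We then use a subsolution for $u$ moving with speed $s''\in(s,s_{1,GL}^*)$, obtained from a traveling front of~\eqref{Ph} connecting $E_1$ to $E_2$ with speed in $(s,s_{1,GL}^*)$; such fronts exist for every speed above the minimal one, and $s_{1,GL}^*\ge$ the minimal speed. We cut this front off compactly into a lower–upper pair in the sense of Section~\ref{sec:comparison}, namely $(\underline u,\bar v)$. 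Here $\underline u$ is a compactly supported bump travelling at speed $s''$, and $\bar v=\min\{1,\ldots\}$ uses the front's $v$-profile. On the bump, $\alpha_1\equiv1$ since it stays ahead of the shift. The large initial $u$ lies above the bump, and the small $v_0$ lies below $\bar v$. This gives persistence of $u$ in $[(s+\ep)t,(s''-\ep)t]$. Combined with the upper bounds of Theorem~\ref{thm:gl} (the heterogeneous solution is below the homogeneous one for $u$, and comparison in the competitive order), we get the invasion behaviour on $(s_{1,GL}^*\pm\ep)t$. To upgrade from $s''$ to the full interval up to $s_{1,GL}^*$, we rerun the argument of Theorem~\ref{thm:2_strongslow}$(iv)$ once $u$ is known to be bounded below in a moving frame faster than $s$: from that point on, the climate change is irrelevant, and $u$ locally converges to $1$ behind its front.

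\emph{Main obstacle.} The delicate step is the extinction part. We must make the supersolution for $u$ robust during the transient time before $v$ is close to $1$, and control the nonlocal pulling from the $v$-free region ahead. I would first try handling the transient by shrinking $u_0$: a linear growth bound $e^{r_1T}\|u_0\|$ on $[0,T]$ is absorbed by the factor $\delta$.
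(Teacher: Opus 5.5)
Both halves of your proposal have a genuine gap. In both, what is missing is a deliberate choice of $\alpha_1,\alpha_2$ relative to each other.

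\emph{Extinction.} You fix arbitrary $\alpha_1,\alpha_2$, but then the conclusion can fail for every initial datum. Take $\alpha_2=\alpha_1(\cdot-M)$ with $M$ large. In the moving frame there is then a stretch of length about $M$ ahead of $st$ where $\alpha_1\approx1$ while $\alpha_2<0$, so $v$ is exponentially small there. Since $s<s_1^*$, $u$ grows to nearly $1$ on that stretch: it is positive everywhere for $t>0$, wherever $u_0$ was placed. The mechanism of the second bullet then makes $u$ invade at speed $s_{1,GL}^*>s$.

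The step that breaks in your argument is ``$v\ge1-\eta$ on $[st,(s_2^*-\varepsilon)t]$''. The scalar result (Theorem~\ref{thm:lbsf}) only gives $v\to1$ at some distance ahead of the shift. In the transition zone of $v$, $\alpha_1$ may already be close to $1$. There an exponential moving at speed $s'<s$, with rate tied to $r_1(1-a_2)$, is not a supersolution, and your factor $\phi$ only handles $\{\alpha_1<0\}$.

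The paper instead takes $\alpha_2=\alpha_1(\cdot+M)$, so that $\alpha_1<0$ on $(-\infty,0]$ while $\alpha_2>1-\delta$ on $[-L_1,\infty)$. It builds from $t=0$ a lower solution $\underline v$ with plateau $1-2(b_1+1)\delta$ on $[st,(s_2^*-\sqrt{\delta})t]$ and tails $\psi_1,\psi_2$. Then $\overline u$, equal to $\delta$ for $x\le st$ and to the nonlocal-pulling ansatz with $c=s$ beyond, is a genuine supersolution, with no transient to absorb.

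\emph{Spreading.} The interval $(s_{1,nlp}^*,s_{1,GL}^*)$ is nonempty only when $s_{1,GL}^*=s_{1,mtf}^*$ (Theorem~\ref{thm:gl_add}). So every speed in $(s,s_{1,GL}^*)$ is below the minimal front speed, and the fronts you want to truncate do not exist.

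What is available is the lower-upper pair of Proposition~\ref{prop:gl_subsol} for the perturbed system~\eqref{p+}, moving at $s_{1,mtf,\delta}^*-\varepsilon$. It is not compactly supported, since $\underline U_\delta$ stays near $1-\delta_1$ on a left half-line. It can therefore only be compared with $(u,v)$ on a moving half-line $\{x\ge st+x_\eta\}$, with $u$ near $1$ and $v$ small on its left boundary \emph{for all times}.

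That is the real content of~\eqref{eqn:x_delta}. The paper takes $\alpha_1\ge1-\eta/3$ on $[0,\infty)$ and a compactly supported subsolution moving at speed $s<s_1^*$ with value $1-\eta$ at $st$. It then positions $\alpha_2$ and $v_0$ so that $v\le\eta/(3a_2)$ on the support of that subsolution, i.e.\ on a stretch where $\alpha_1$ is favourable but $v$ is kept small by $\alpha_2$. Only afterwards does it bootstrap with smaller $\delta$. A tiny $v_0$ with no condition on $\alpha_2$ gives no such all-time control, since $v$ grows from any positive data.

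Finally, the upper bound does not come from comparison with the homogeneous system. Since $\alpha_2\le1$ weakens $v$, $(u,v)$ is not a lower-upper solution of~\eqref{Ph}; use Lemma~\ref{lem:lessthanGL} instead.
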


\subsection*{Plan of the paper} 
The rest of this paper is organized as follows. In \S~2, we shall provide some preliminaries on the spreading properties of solutions for scalar equations in shifting environments. From these known results for scalar equations, we will already be able to infer statements~$(i)$ and~$(ii)$ in both Theorems~\ref{thm:1_strongfast} and \ref{thm:2_strongslow},
some limits of $(iii)$ in Theorem~\ref{thm:1_strongfast} and some limits of $(iii)$, $(iv)$ in Theorem~\ref{thm:2_strongslow}.
Moreover, the classical nonlocal pulling phenomenon will be reviewed, and proved by a simple comparison argument different from that in~\cite{ly22}.

Then we provide in~\S~3 further preliminaries on the spreading dynamics for the homogeneous competition system~\eqref{Ph}, along with a comparison principle and a Liouville type result. We will go into more details in the results of~\cite{gl19}, the precise definition of the speed~$s_{1,GL}^*$, and some elements of their proofs. In particular we will produce some delicate upper-lower solutions, that will turn out to be useful to deal with the heterogeneous system~\eqref{P}. 

Finally, with all those tools in hand, the proofs of our main results shall be completed in~\S~4 and~\S~5. More precisely, \S~4 will deal with the case of a stronger and faster competitor, namely Theorem~\ref{thm:1_strongfast}. In~\S~5, we will turn to the more intricate case of a stronger but slower competitor, and prove both Theorem~\ref{thm:2_strongslow} and Proposition~\ref{prop:diff_outcomes}.



\section{Preliminaries : the scalar case}

Several preliminaries will be useful in our proofs, drawing from relevant references in the literature. In particular, we will present in more details the speed $s_{1,nlp}^*$, and later the speed~$s_{1,GL}^*$, that appeared in our main results.

In this section we will discuss known spreading properties of solutions of scalar equations with shifting heterogeneities. First, we will look at the effect of a climate change on a single species, drawing in particular from~\cite{lbsf14}. Putting this together with simple comparison arguments, this will allow us to already derive some estimates on the solutions of~\eqref{P}. Second, we will consider another type of moving heterogeneity, where the growth rate is everywhere positive, and explain the ``nonlocally pulled'' speed $s_{1,nlp}^*$ in the spirit of e.g.~\cite{hs14,gl19,fgh22,ly22}.

\subsection{The effect of climate change on a single species}\label{sec:prelim_single1}

In this subsection, we briefly discuss a first type of scalar equation with a shifting heterogeneity. This is simply the climate change problem with a single species, where the shifting heterogeneity changes sign with respect to the spatial variable. We will also discuss some consequences on our full system~\eqref{P}.

More precisely, let us denote by~$u$ the solution of
\be\label{U-eq}
\bss
u_t=d_1 u_{xx}+r_1 u[\alpha_1(x-st)- u],\;x\in\bR,\,t>0,\vspace{3pt} \\
u(x,0)=u_0(x),\; x\in\bR.
\ess
\ee
This kind of problem was studied in the context of climate change first in~\cite{bdnz09} when the favorable zone is bounded. Here the function $\alpha_1$ is positive on a right half-line and thus we instead invoke a result from~\cite{lbsf14}. We also recall that $s_1^* = 2 \sqrt{d_1 r_1}$, which is the spreading speed, or equivalently the minimal front speed, related to the homogeneous problem ahead of the climate change.
\begin{theorem}[\cite{lbsf14}, Theorems 2.1 and 2.2]\label{thm:lbsf}
	Let $u$ be the solution of the heterogeneous scalar equation~\eqref{U-eq} with continuous initial data $0 \leq \not \equiv u_0 \leq 1$. Then
	$$\lim_{t\to +\infty}\left\{\sup_{x\le (s-\ep)t} | u (x,t)| \right\}=0 ,$$
	for any~$\ep>0$, and the following two statements also hold true.
	\begin{enumerate}[$(i)$]
		\item If $s \geq s_1^*$ and the support of $u_0$ is bounded from above, then
		$$\lim_{t\to + \infty} u (x,t)= 0,$$
		uniformly with respect to~$x \in \mathbb{R}$.
		\item If $s < s_1^* $, then 
					$$\lim_{t\to +\infty}\left\{\sup_{(s+\ep) t \le x\le (s_1^*-\ep)t} | u (x,t) -1 |\right\}=0,$$
	for any $\ep \in \left(0, \frac{s_1^* - s}{2} \right)$, and if furthermore the support of $u_0$ is bounded from above, then
			$$\lim_{t\to +\infty}\left\{\sup_{x\ge (s_1^*+\ep)t} | u (x,t)| \right\}=0,$$
	for any $\ep >0$.
	\end{enumerate}
\end{theorem}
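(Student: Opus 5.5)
The plan is to work throughout in the moving frame $\xi=x-st$. There the equation reads
$$u_t=d_1u_{\xi\xi}+su_\xi+r_1u[\alpha_1(\xi)-u],$$
and every assertion will follow from the comparison principle for this scalar Fisher-KPP equation. By~\eqref{al}, fix $\delta>0$ and $L>0$ such that $\alpha_1\le-\delta$ on $(-\infty,-L]$, and note that $\alpha_1\le\|\alpha_1\|_\infty$ everywhere.

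\emph{Extinction behind the shift.} I will take as supersolution $\overline w=\min\{1,\,e^{\lambda(\xi+L)+\eta t}+e^{-r_1\delta t}\}$.
\begin{itemize}
\item On $\{\xi\le -L\}$ the reaction is bounded above by $-r_1\delta u$. The term $e^{-r_1\delta t}$ is then a supersolution, and $e^{\lambda(\xi+L)+\eta t}$ is one as soon as $\eta\ge d_1\lambda^2+s\lambda-r_1\delta$.
\item For $\lambda>0$ small the right-hand side of that inequality is negative, so we may take $\eta=\lambda\ep/2$.
\item On $\{\xi\ge -L\}$ we have $\overline w=1\ge u$.
\end{itemize}
Hence $\overline w$ is a generalized supersolution (a minimum of supersolutions), and $\overline w(\cdot,0)\ge 1\ge u_0$. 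For $\xi\le-\ep t$ this gives $u\le e^{\lambda L-\lambda\ep t/2}+e^{-r_1\delta t}\to0$, which is the first claim.

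\emph{Upper bound ahead and the lower bound in $(ii)$.} For the far right, with $u_0$ supported in $(-\infty,K]$, I will compare with the linearized problem. The solution of $\overline u_t=d_1\overline u_{xx}+r_1\overline u$ with data $\mathbf 1_{x\le K}$ (and $\alpha_1$ replaced by $1$ for $\xi$ large, plus a bounded correction where $\alpha_1>1$) gives $u\le Ce^{-\lambda^*(x-s_1^*t)}$ up to algebraic factors. This yields $\sup_{x\ge(s_1^*+\ep)t}u\to0$.

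For the lower bound when $s<s_1^*$, pick $c\in[s+\ep,s_1^*-\ep]$ and $\delta'>0$ with $c^2<4d_1r_1(1-\delta')$. For $R$ large, the principal Dirichlet eigenvalue of $d_1\phi''+c\phi'+r_1(1-\delta')\phi$ on $(-R,R)$ is positive. Once $\alpha_1\ge1-\delta'$ on the relevant region, $\epsilon\phi(x-ct-x_0)$ is then a compactly supported subsolution. Since $u(\cdot,1)>0$ by the strong maximum principle, $\epsilon\phi$ fits under $u$ at a positive time, and a uniform-in-$c$ positive lower bound follows. I will then upgrade this to convergence to $1$ by a standard argument. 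One option is a contradiction argument with a sequence of times and points: pass to the limit, obtain an entire solution of the homogeneous KPP equation bounded below by a positive constant, and conclude via the Liouville property that it equals $1$. Alternatively, one can iterate sliding subsolutions of $u_t=d_1u_{xx}+r_1u(1-\delta'-u)$ and let $\delta'\to0$.

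\emph{Case $(i)$: $s\ge s_1^*$.} This is where I expect the main obstacle, and it lies in the critical case $s=s_1^*$. For $s>s_1^*$, the right-side estimate above together with extinction for $\xi\le-\ep t$ (taking $\ep<s-s_1^*$) already covers all $x$. For $s=s_1^*$, linear bounds only give $u\lesssim1$ near $\xi=O(\sqrt t)$, so the logistic term must be used. I would first try the following. Since $u\le\overline U$ for the homogeneous Fisher-KPP solution $\overline U$ with data $\mathbf 1_{x\le K}$, Bramson's logarithmic delay gives $\overline U(x,t)\to0$ uniformly on $x\ge s_1^*t-\tfrac{3}{2\lambda^*}\log t+M(t)$ with $M(t)\to\infty$. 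On the complementary region $\xi\le -\tfrac{3}{2\lambda^*}\log t+M(t)$, the previous supersolution (its exponential part shifted by at most $o(t)$) makes $u$ small: $\alpha_1\le-\delta$ there once the logarithmic lag exceeds $L$. To avoid invoking Bramson, I would instead try an explicit supersolution in the frame, of the form $\min\{1,\,A(\xi+B)e^{-\lambda^*\xi}\}$ corrected by a slowly decaying factor. This uses that $\xi e^{-\lambda^*\xi}$ solves the critical linear equation exactly, so the negative sign of $\alpha_1-1$ on the left, or alternatively the term $-r_1u^2$, produces the needed uniform decay.
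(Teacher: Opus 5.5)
Your handling of the first limit, of the decay on $\{x\ge(s_1^*+\ep)t\}$ via the linear heat-kernel comparison, and of part $(ii)$ matches the paper. In part $(ii)$ you place a small compactly supported subsolution moving at speed $c\in(s,s_1^*)$ inside the region where $\alpha_1\ge1-\delta'$, then finish with a Liouville argument, as the paper does. You diverge only in the critical case $s=s_1^*$, and there your premise is wrong: the linear bound does not stall at $\xi=O(\sqrt t)$. Take $\mathrm{supp}\,u_0\subset(-\infty,X_0]$ and $\lambda^*=\sqrt{r_1/d_1}$. The Gaussian tail estimate gives, for each fixed $X>0$ and all large $t$,
$$u(x,t)\le \frac{C}{\sqrt t}\,e^{\lambda^*(X+X_0)}\qquad\text{for all } x\ge s_1^*t-X,$$
because $r_1t-\frac{(s_1^*t-X-X_0)^2}{4d_1t}\le\lambda^*(X+X_0)$. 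So the linear level sets already lag $s_1^*t$ by about $\frac{1}{2\lambda^*}\log t$.

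The paper pairs this with the supersolution $e^{-\delta t}+e^{\delta(x-st-K)}$ on $\{x\le st+K\}$. That supersolution is stationary in the moving frame, so it gives $\limsup_{t\to\infty}\sup_{x\le st-X}u\le e^{-\delta(X+K)}$. Since $s\ge s_1^*$, the regions $\{x\le st-X\}$ and $\{x\ge s_1^*t-X\}$ cover $\bR$, and letting $X\to\infty$ concludes. None of the logistic term, Bramson's $\frac{3}{2\lambda^*}\log t$ correction, or a $\xi e^{-\lambda^*\xi}$-type supersolution is needed.

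Your Bramson route can be made rigorous, but not with the supersolution as you set it up. With $\eta=\lambda\ep/2>0$, the term $e^{\lambda(\xi+L)+\eta t}=e^{\lambda(\xi+L+\ep t/2)}$ is shifted by $\ep t/2$, not $o(t)$. It therefore blows up at any bounded or logarithmic distance behind $st$, and gives nothing on the zone $-\ep t<\xi<-\frac{3}{2\lambda^*}\log t+M(t)$.

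Take $\eta=0$ instead, which your own computation permits since $d_1\lambda^2+s\lambda-r_1\delta<0$ for small $\lambda$. Then $u\le e^{\lambda(\xi+L)}+e^{-r_1\delta t}$ on $\{\xi\le-L\}$, and this tends to $0$ uniformly on $\{\xi\le-\frac{3}{2\lambda^*}\log t+\log\log t\}$. With that fix the argument closes, but it replaces an elementary computation with a deep theorem.

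Finally, drop the ``bounded correction where $\alpha_1>1$''. In case $(i)$, a large enough bump of $\alpha_1$ above $1$ can make the principal eigenvalue of $d_1\partial_\xi^2+s\partial_\xi+r_1\alpha_1(\xi)$ positive, and then the species persists. The statement implicitly requires $\alpha_1\le1$, as does your use of $u\le1$ behind the shift.
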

In particular, for compactly supported initial data, the population goes to extinction if $s \geq s_1^*$, and persists if $s < s_1^*$. That is, whatever the (compactly supported) initial distribution, $s_1^*$ is the critical climate change speed that the species can withstand. We insist on the fact that this is related to the Fisher-KPP type nonlinearity in~\eqref{U-eq}. For more general monostable nonlinearities, and more specifically when the minimal front speed is nonlinearly determined, the critical climate change speed depends on the initial data~\cite{bg19}.
\begin{proof}
	As a matter of fact, what we wrote here is a more general result than the one stated in~\cite{lbsf14}, where the case $s = s_1^*$ was not dealt with and moreover the climate change function~$\alpha$ was assumed to be nondecreasing. Yet this generalization does not raise any significant issue, thus we merely sketch a proof here.
	
	For the first limit in Theorem~\ref{thm:lbsf}, we introduce
	$$\overline{u} (x,t) = e^{-\delta t} + e^{\delta (x-st - K)} ,$$
	where $K \in \mathbb{R}$ is such that $\alpha_1 (\cdot) \leq \frac{\alpha_1 (-\infty)}{2} < 0$ on $(-\infty, K]$, and $\delta >0$ will be specified below. Then $\overline{u} \geq 1 \geq u$ on the parabolic boundary of the subdomain $\{ t \geq 0, \ x \leq st + K \}$. Moreover, in the same subdomain, we have that
\begin{eqnarray*}
	& & \partial_t \overline{u} - d_1  \overline{u}_{xx} - r_1 \overline{u} [ \alpha_1 - \overline{u}]\\
	& \geq & \partial_t \overline{u} - d_1 \overline{u}_{xx} -   \frac{r_1 \alpha_1 (-\infty)}{2} \overline{u}\\
	& \geq & e^{-\delta t} \left[ - \delta -  \frac{r_1 \alpha_1 (-\infty)}{2} \right] + e^{\delta (x-st- K)} \left[- s \delta - d_1 \delta^2 - \frac{r_1 \alpha_1 (-\infty)}{2} \right]\\
	& \geq & 0,
\end{eqnarray*}
where the last inequality holds provided that~$\delta$ is small enough. Thus, by the parabolic comparison principle, we infer that $u(x,t) \leq \overline{u} (x,t)$ for any $t \geq 0$ and $x \leq st +K$, and then that
\begin{equation}\label{eq:almost_Ueq}
	\forall \eta >0, \ \exists X >0, \quad \lim_{t \to +\infty} \left\{ \sup_{x \leq st - X } |u (x,t)| \right\} \leq \eta.
\end{equation}

Next, statement~$(i)$ (as well as the last limit in statement~$(ii)$) follows from the fact that
	$$u_t \leq d_1 u_{xx} +r_1 u,$$
	thus, by the comparison principle,
	$$u (x,t) \leq \frac{e^{r_1 t}}{\sqrt{4\pi d_1 t}} \int_{\mathbb{R}} e^{-\frac{(x-y)^2}{4d_1t}} u_0 (y) dy.$$
	Due to $u_0 \leq 1$, and if moreover the support of $u_0$ is included in the left half-line $(-\infty, X_0]$, we get that
	$$u (x,t) \leq \frac{e^{r_1 t}}{\sqrt{4\pi d_1 t}} \int_{-\infty}^{X_0} e^{-\frac{(x-y)^2}{4d_1t}} dy.$$
	Recalling that $s_1^* = 2 \sqrt{d_1 r_1} >0$, one may compute (using the standard Gaussian tail bound) that, for any $X >0$, there exist $C >0$ and $T>0$ large enough so that, for any $t \geq T$ and $x \geq s_1^* t  - X$, we have
	\begin{eqnarray*}
		u (x,t) 
		& \leq &  \frac{C e^{r_1 t}}{\sqrt{t}} e^{-\frac{(x-X_0)^2}{4d_1t}} \\
		& \leq &   \frac{C e^{r_1 t-\frac{(s_1^*t - X -X_0)^2}{4d_1t}}}{\sqrt{t}} \\
		& \leq &   \frac{C e^{ \sqrt{\frac{r_1}{d_1}} (X+X_0)}}{\sqrt{t}}, 
	\end{eqnarray*} 
	hence
	$$\lim_{t \to +\infty} \sup_{ x \geq s_1^* t - X} u (x,t)    = 0.$$
	When $s \geq s_1^*$, combining this with~\eqref{eq:almost_Ueq}, the uniform convergence to~$0$ follows.
	
	The other limit in statement~$(ii)$ is slightly more intricate. It relies on the fact that, for any $\delta \in (0,1)$, there exists $K>0$ large enough so that
	$$u_t \geq d_1 u_{xx} + r_1 u ( 1 - \delta - u),$$
	on the moving half-line $\{x \geq  st + K \}$. Moreover, this homogeneous Fisher-KPP equation admits compactly supported subsolutions with any speed (strictly) less than $s_1^*$; see~\cite{aw75}. Shifting such subsolution so that its support is included in $\{ x \geq st +K\}$, one may apply the comparison principle again on the same set to eventually conclude that, for any $s < s_1^*$ and $\varepsilon \in \left( 0, \frac{s_1^* - s}{2} \right)$, we have
	\begin{equation}\label{eq_last_detail}
		\liminf_{t\to +\infty}\left\{\inf_{(s+\ep) t \le x\le (s_1^*-\ep)t} u (x,t)  \right\} > 0 .
	\end{equation}
	Now, for any sequences $t_n \to +\infty$ and $(s+  \varepsilon) t_n \leq x_n \leq (s_1^*- \varepsilon)t_n$, then by standard parabolic estimates, up to extraction of a subsequence, the functions $u (x_n+ \cdot ,t_n+ \cdot )$ converge locally uniformly as $n \to +\infty$ to an entire in time solution~$u_\infty$ of the Fisher-KPP equation 
		$$\partial_t u_\infty = d_1 u_{\infty,xx} + r_1 u_\infty ( 1 - u_\infty).$$
	It follows from~\eqref{eq_last_detail} (where $\varepsilon$ can be made arbitrarily small) that $u_\infty \equiv 1$. Indeed, a Liouville type result (see, e.g., Proposition~1.14 in~\cite{bhn}) ensures that~$1$ is the only bounded entire in time solution with positive infimum. This completes the proof.
\end{proof}

Let us now go back to~\eqref{P}-\eqref{i-bd}. Due to $v \geq 0$, the $u$-component of the solution of~\eqref{P} is a subsolution to \eqref{U-eq} and we may apply the comparison principle. In particular, the extinction results on the solution of~\eqref{U-eq} immediately imply the extinction of~$u$, and a similar argument can be done on the $v$-component. This leads to the following corollary of Theorem~\ref{thm:lbsf}.
\begin{corollary}\label{thm:scalar_shifting1}
Let $(u,v)$ be the solution of \eqref{P} with initial data $(u_0,v_0)$ satisfying~\eqref{i-bd}. Then 
$$\lim_{t \to +\infty}\left\{ \sup_{x \leq (s-\varepsilon) t } |u(x,t)| + \sup_{x \leq (s-\ep )t } | v (x,t)|  \right\} = 0,$$
for any $\ep >0$, and also:
\begin{enumerate}[$(i)$]
	\item if the support of $u_0$, respectively $v_0$, is bounded from above, then
		$$\lim_{t\to +\infty}\left\{\sup_{x\ge (s_1^*+\ep)t} |u (x,t)|\right\}=0,$$
		respectively
	$$\lim_{t\to +\infty}\left\{\sup_{x\ge (s_2^*+\ep)t} |v (x,t)| \right\}=0,$$
	for any $\ep >0$;
	\item if $s \geq s_1^*$ and the support of $u_0$ is bounded from above, respectively $s \geq s_2^*$ and the support of $v_0$ is bounded from above, then 
	$$\lim_{t\to +\infty} \sup_{x \in \mathbb{R}} u (x,t) = 0,$$
	respectively 
	$$\lim_{t\to +\infty} \sup_{x \in \mathbb{R}} v (x,t) = 0.$$
\end{enumerate}
\end{corollary}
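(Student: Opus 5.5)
The corollary follows from Theorem~\ref{thm:lbsf} by a componentwise comparison argument, with no new construction needed.

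\textbf{Supersolution for $u$.} Since $0\le u,v\le 1$ and $a_2>0$, the $u$-component of the solution of~\eqref{P} satisfies
$$u_t - d_1 u_{xx} - r_1 u[\alpha_1(x-st)-u] = -r_1 a_2 uv \le 0 .$$
So $u$ is a subsolution of the scalar problem~\eqref{U-eq}. Let $U$ denote the solution of~\eqref{U-eq} with the same initial datum $u_0$, which satisfies $0\le\not\equiv u_0\le 1$ by~\eqref{i-bd}. The scalar parabolic comparison principle then gives
$$0\le u\le U \quad\text{in } \bR\times(0,\infty).$$
The reaction term is locally Lipschitz and all functions involved are bounded, so this comparison is justified.

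\textbf{Supersolution for $v$.} Symmetrically, $b_1 u\ge 0$ gives
$$v_t\le d_2 v_{xx}+r_2 v[\alpha_2(x-st)-v].$$
Hence $v\le V$, where $V$ solves~\eqref{U-eq} with $(d_1,r_1,\alpha_1,u_0)$ replaced by $(d_2,r_2,\alpha_2,v_0)$. Theorem~\ref{thm:lbsf} applies equally to $V$ with $s_1^*$ replaced by $s_2^*=2\sqrt{d_2r_2}$, since $\alpha_2$ satisfies~\eqref{al}.

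\textbf{Transferring the limits.} Each conclusion of the corollary is an upper bound tending to zero, so it transfers directly from $U$ and $V$ to $u$ and $v$.
\begin{itemize}
\item The first limit, $\sup_{x\le(s-\ep)t}$, follows from the first assertion of Theorem~\ref{thm:lbsf} applied to $U$ and to $V$. That assertion holds for any admissible initial data, with no support condition.
\item Statement~$(i)$ follows from the Gaussian/heat-kernel bound in the proof of Theorem~\ref{thm:lbsf}, which gives $\sup_{x\ge (s_i^*+\ep)t}\to0$ for data with support bounded from above, for any $s\ge0$. This bound was derived only from $U_t\le d_1U_{xx}+r_1U$, and it is stated in Theorem~\ref{thm:lbsf}$(ii)$ for $s<s_1^*$; the case $s\ge s_1^*$ is covered by $(i)$ of that theorem.
\item Statement~$(ii)$ is exactly Theorem~\ref{thm:lbsf}$(i)$ applied to $U$, respectively $V$.
\end{itemize}

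There is no genuine obstacle. The only point needing care is to check that the hypotheses of Theorem~\ref{thm:lbsf} hold for each scalar comparison problem: continuous data between $0$ and $1$, not identically zero, and $\alpha_i$ satisfying~\eqref{al}. All of these are provided by~\eqref{al} and~\eqref{i-bd}.
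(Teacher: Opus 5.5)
Your proposal is correct and follows the paper's own argument. Since $v\ge 0$ (resp.\ $u\ge 0$), each component is a subsolution of the scalar climate-change equation~\eqref{U-eq} with the corresponding parameters, so comparison with that solution transfers every vanishing limit of Theorem~\ref{thm:lbsf} to $u$ and $v$. Your remark that statement~$(i)$ for $s\ge s_1^*$ (resp.\ $s\ge s_2^*$) follows from the uniform extinction in Theorem~\ref{thm:lbsf}$(i)$ correctly settles the one case the paper leaves implicit.
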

Notice that~\eqref{eq:hypothesis} is not necessary here. In fact, Corollary~\ref{thm:scalar_shifting1} holds for any $a_2,b_1>0$. Biologically, this means that no matter how strong the competition, in order to survive, each species must at the very least keep pace on its own with the shifting speed of the environment. 
This already proves some of the estimates of our main results, namely in Theorem~\ref{thm:1_strongfast}:~$(i)$, the first and second limits of~$(ii)$,
$$\lim_{t \to +\infty} \left\{\sup_{x\le (s - \ep)t} |v(x,t)| + \sup_{x\ge (s_2^* +\ep)t} |v (x,t)| \right\} = 0,$$ 
and the second limit of~$(iii)$; and in Theorem~\ref{thm:2_strongslow}:~$(i)$, the first and second limits of~$(ii)$, and some of the limits of~$(iii)$ and~$(iv)$.

The next result also follows from~\cite{lbsf14}, though it requires an additional step.
\begin{corollary}\label{th:general}
	Assume that $s_2^* \leq s<s_1^*$, and let $(u,v)$ be the solution of~\eqref{P} with initial data $(u_0,v_0)$ satisfying \eqref{i-bd} and such that the support of $v_0$ is bounded above. Then
\begin{equation*}
	\lim_{t\to +\infty}\left\{\sup_{(s+\ep)t\le x\le(s_1^*-\ep)t} |u(x,t)-1| \right\}=0,
\end{equation*}
	for all $\ep\in \left(0, \frac{s_1^*-s}{2} \right)$.
\end{corollary}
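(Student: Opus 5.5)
Since $s \geq s_2^*$ and the support of $v_0$ is bounded from above, statement~$(ii)$ of Corollary~\ref{thm:scalar_shifting1} gives $\sup_{x\in\mathbb{R}} v(x,t) \to 0$ as $t \to +\infty$. The $u$-equation then looks asymptotically like the scalar climate change problem~\eqref{U-eq} with a slightly reduced growth rate. The plan is to reproduce the proof of statement~$(ii)$ of Theorem~\ref{thm:lbsf}, now with $v$ treated as a vanishing perturbation.

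\textbf{Step 1 (a lower differential inequality).} Fix $\delta \in (0,1)$. Choose $T>0$ such that $a_2 v(x,t) \leq \delta/2$ for all $x \in \mathbb{R}$ and $t \geq T$. Choose $K>0$ such that $\alpha_1 \geq 1 - \delta/2$ on $[K,+\infty)$. Then
$$u_t \geq d_1 u_{xx} + r_1 u (1-\delta - u)$$
on the moving half-line $\{ x \geq st + K,\ t \geq T\}$.

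\textbf{Step 2 (positivity in the intermediate range).} Since $s < s_1^*$, we may pick $\delta$ small enough that $s < 2\sqrt{d_1 r_1 (1-\delta)}$; this speed tends to $s_1^*$ as $\delta \to 0$. As in~\cite{aw75}, the Fisher-KPP equation with reaction $r_1 u(1-\delta-u)$ admits compactly supported subsolutions moving with any speed $c < 2\sqrt{d_1r_1(1-\delta)}$. By the strong maximum principle, $u(\cdot,T)>0$ everywhere. A small multiple of such a subsolution, placed at time $T$ inside $\{x \geq sT + K\}$, therefore lies below $u(\cdot,T)$.

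Comparing on the moving domain requires the subsolution to stay in $\{x \geq st+K\}$ and to vanish on its boundary. Using a family of such subsolutions with speeds ranging over $(s, 2\sqrt{d_1 r_1(1-\delta)})$, or equivalently a subsolution translated along rays, we get, for any small $\varepsilon$,
$$\liminf_{t \to +\infty} \inf_{(s+\varepsilon)t \leq x \leq (s_1^* - \varepsilon) t} u(x,t) > 0.$$
Here $\delta$ is taken small depending on $\varepsilon$. I expect this step to be the main technical point, exactly as in the proof of~\eqref{eq_last_detail}; the only new ingredient is the time shift $T$.

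\textbf{Step 3 (Liouville argument).} Take sequences $t_n \to +\infty$ and $x_n \in [(s+\varepsilon)t_n, (s_1^*-\varepsilon)t_n]$. By parabolic estimates, and extracting a subsequence, the functions $(u,v)(x_n+\cdot, t_n+\cdot)$ converge locally uniformly. The limit of $v$ is $0$, and $\alpha_1(x_n + x - s(t_n+t)) \to 1$ locally uniformly because $x_n - st_n \geq \varepsilon t_n \to +\infty$. Hence the limit $u_\infty$ is an entire solution of $\partial_t u_\infty = d_1 u_{\infty,xx} + r_1 u_\infty(1-u_\infty)$.

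Applying Step~2 with $\varepsilon/2$ in place of $\varepsilon$ shows that $u_\infty$ has positive infimum. The Liouville result (Proposition~1.14 in~\cite{bhn}) then gives $u_\infty \equiv 1$. Since the sequences were arbitrary, this yields the uniform convergence $u \to 1$ on $(s+\varepsilon)t \leq x \leq (s_1^*-\varepsilon)t$.
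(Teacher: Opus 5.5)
Your proof is correct and is essentially the paper's argument: both use Corollary~\ref{thm:scalar_shifting1}$(ii)$ to get $v\to 0$ uniformly, so that for $t\ge T$ the $u$-component satisfies $u_t\ge d_1u_{xx}+r_1u[\alpha_1(x-st)-\delta-u]$. The only difference is packaging. The paper compares $u$ with the solution of the renormalized scalar problem (with $\alpha_1-\delta$) and invokes Theorem~\ref{thm:lbsf}$(ii)$ as a black box to get $\liminf u\ge 1-\delta$ before letting $\delta\to 0$. You instead re-run that theorem's proof inline: compactly supported subsolutions, uniform in the speed $c$, give positivity, and the scalar Liouville result is applied to the limit where $v_\infty\equiv 0$ and $\alpha_1\to 1$. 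This avoids the renormalization, at the cost of redoing the positivity step yourself.
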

\begin{proof}
	Indeed, by~$(ii)$ of Corollary~\ref{thm:scalar_shifting1}, we know that in this case $v$ converges uniformly to~$0$. In particular, for any $\delta >0$, there exists $T>0$ such that
		$$u_t \geq d_1 u_{xx} + r_1 u [\alpha_1 (x-st) - \delta - u],$$
		for all $x \in \mathbb{R}$ and $t > T$. 
	
	If $\delta >0$ is small enough, then the shifting heterogeneity $\alpha_1 (\cdot ) -\delta$ still satisfies (up to some renormalization) assumption~\eqref{al}, so that Theorem~\ref{thm:lbsf} again applies. More specifically, by~$(ii)$ of~Theorem~\ref{thm:lbsf} and a comparison principle, we get that 
		$$\liminf_{t\to +\infty}\left\{\inf_{(s+\ep) t \le x\le ( 2\sqrt{d_1 r_1 (1-\delta)}-\ep)t}  u (x,t) - (1-\delta)  \right\} \geq 0,$$
		for any small $\ep >0$. Taking $\delta \to 0$, we reach the wanted conclusion.
\end{proof}
Putting this together with Corollary~\ref{thm:scalar_shifting1}, this completes the proof of~$(ii)$ in Theorem~\ref{thm:1_strongfast}. Moreover, notice that Corollary~\ref{th:general} again holds regardless of the type of the competition. In particular, one may invert the roles of $u$ and $v$ and get a similar result when $s_1^*\leq s < s_2^*$, which also ends the proof of~$(ii)$ in Theorem~\ref{thm:2_strongslow}. That is, when $a_2< 1 < b_1$ and $s_1^*\leq s<s_2^*$, the weak but faster competitor~$v$ benefits in some sense from the climate change, and spreads while the a priori stronger competitor does not.

\subsection{Shifting heterogeneities and nonlocal pulling}\label{sec:prelim_nlp}

In this subsection, we deal with another kind of shifting heterogeneity where basically the function~$\alpha$ is instead positive everywhere. As we explained in Section~\ref{sec:strongslow}, this may be relevant when~$u$ is slow and has to invade an environment which is partially populated by $v$. Due to $u$ being stronger, this means that $u$ spreads in an environment in which, putting aside the climate change, its growth rate is always positive but is higher far to the right where there is no competitor.

This led us to~\eqref{eq:shifting1}, that is
$$u_t = d_1 u_{xx} + r_1 u [1- u -a_2 H (s_2^* t -x)],$$
where $H$ is the Heaviside function. It arises when plugging $\alpha_1 =1$ everywhere, and $v = 1$ (respectively $v=0$) behind (respectively ahead of) the moving frame with speed $s_2^*$ in the $u$-equation of system~\eqref{P}. This is motivated by the eventuality that $v$ spreads with its optimal speed~$s_2^*$. In fact, since~$v$ may only behave in such a way asymptotically in time, and to deal with the climate change term, perturbations of~\eqref{eq:shifting1} will have to be considered, such as
\begin{equation*}\label{eq:shifting1delta}
	u_t = d_1 u_{xx} + r_1 u [ 1 - \delta - u - a_2 H ((s_2^* +\delta) t -x) -{a_2 \delta} H (x- (s_2^* +\delta) t) ],
\end{equation*}
for some small $\delta >0$. However, up to some renormalizations, both equations are equivalent so we will focus here on \eqref{eq:shifting1}.

Applying for instance the work~\cite{ly22}, but referring also to~\cite{fgh22} where a similar computation as below was done in the case when diffusion is heterogeneous instead, the following result is available.
\begin{theorem}[\cite{ly22}, Theorems~2 and~6]\label{thm:nlp}
	Let $u$ be the solution of the heterogeneous scalar equation~\eqref{eq:shifting1} with continuous initial data $0 \leq \not \equiv u_0 \leq 1$ whose support is bounded from above.
	
	Then there exists $s_{1,nlp}^*$ defined as follows:
	\begin{enumerate}[$(i)$]
        \item if $$s_2^* \leq s_1^*,$$
		then $$s_{1,nlp}^* = s_1^*;$$
		
		\item if $$ s_2^*  \in \left( s_1^* ,  2 \sqrt{d_1 r_1 (1- a_2)} + 2 \sqrt{d_1 r_1 a_2} \right) $$
		then 
$$s_{1,nlp}^{*}  = s_2^* - \frac{ \frac{(s_2^*)^2}{4} - d_1 r_1}{\frac{s_2^*}{2} -  \sqrt{d_1 r_1 a_2}} \in \left( 2 \sqrt{d_1 r_1 (1-a_2)}, s_1^* \right); $$
		\item if $$ s_2^* \geq 2 \sqrt{d_1 r_1 (1- a_2)} + 2 \sqrt{d_1 r_1 a_2}  $$
		then $$s_{1,nlp}^{*} = 2 \sqrt{d_1 r_1 (1-a_2)};$$
	\end{enumerate}
	such that $u$ spreads with speed $s_{1,nlp}^*$ in the following sense:
		$$\lim_{t \to +\infty} \sup_{x \geq (s_{1,nlp}^* + \ep ) t} u (x,t) = 0,$$
		$$\lim_{t \to +\infty} \sup_{ 0 \leq x \leq (s_{1,nlp}^* - \ep) t} |u (x,t) -1| = 0,$$
	for any $\ep \in \left(0, s_{1,nlp}^* \right)$.
\end{theorem}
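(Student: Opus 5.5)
The plan is to prove Theorem~\ref{thm:nlp} directly by comparison, without the machinery of~\cite{ly22}. Write $d=d_1$, $r=r_1$, $a=a_2$, $c=s_2^*$, and set $z=x-ct$. The linearization of~\eqref{eq:shifting1} around $0$ has growth rate $r(1-a)$ behind the interface $z=0$ and $r$ ahead of it. Case~$(i)$, $c\le s_1^*$, is immediate. The upper bound follows from Theorem~\ref{thm:lbsf}-type Gaussian estimates, since $u_t\le du_{xx}+ru$. The lower bound follows because, ahead of $x=ct$ (which is not faster than $s_1^*$), $u$ solves the homogeneous Fisher-KPP equation, whose compactly supported subsolutions of~\cite{aw75} travel at any speed below $s_1^*$. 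So the work is in $c>s_1^*$, where $u$ never reaches the region $z>0$ on its own but is pulled by its exponential tail there.

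\emph{Upper bound.} I will look for a supersolution of the form $\overline u=\min\{1,w\}$. The function $w$ is piecewise exponential, with $w=e^{-\lambda(x-\sigma t)}$ for $z\le 0$ and $w=e^{-\mu z-\lambda(c-\sigma)t}$ for $z\ge 0$; these agree on $z=0$. The left piece solves the linear equation with rate $r(1-a)$ exactly when $\sigma=d\lambda+r(1-a)/\lambda$. The right piece is a supersolution of the linear equation with rate $r$ when $\mu c-\lambda(c-\sigma)\ge d\mu^2+r$. At the kink, a minimum-type corner is admissible (giving a generalized supersolution) provided $-\lambda\ge-\mu$, i.e. $\mu\ge\lambda$. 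Minimizing $d\mu^2-c\mu$ suggests $\mu=c/2$ whenever $\lambda\le c/2$. This leaves the constraint $\lambda(c-\sigma(\lambda))\le c^2/4-r$, and we then minimize $\sigma(\lambda)$ over admissible $\lambda$. If the unconstrained minimizer $\lambda=\sqrt{r(1-a)/d}$ is admissible, this gives $\sigma=2\sqrt{dr(1-a)}$; this should correspond exactly to $c\ge 2\sqrt{dr(1-a)}+2\sqrt{dra}$. Otherwise the constraint is saturated, and solving $\lambda(c-\sigma)=c^2/4-dr$ together with $\sigma=\sigma(\lambda)$ should return the formula $s_{1,nlp}^*=c-\frac{c^2/4-dr}{c/2-\sqrt{dra}}$. (Here I should double check the normalization, with $r$ replaced by $dr$ in the quadratic.) Initial data with support bounded above lie below a shift of $\overline u$. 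The comparison principle then gives $u\to0$ uniformly on $x\ge(s_{1,nlp}^*+\ep)t$.

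\emph{Lower bound.} This is the main obstacle. I would construct compactly supported subsolutions moving at speed $\sigma=s_{1,nlp}^*-\ep$, of the same two-piece structure as above. The exponentials are perturbed into oscillating ones by giving $\lambda$ (and, ahead of the interface, $\mu$) a small imaginary part, so that they become $e^{-\lambda(x-\sigma t)}\cos(\cdot)$-type profiles. These are truncated to their first positive lobe and multiplied by a small constant, so that the nonlinearity $-u^2$ is absorbed. The delicate point is the matching at $z=0$: for a subsolution, a maximum-type kink is what is allowed. The piece living in $z>0$ has to grow in time at the rate dictated by the interface, which is where the nonlocal pulling appears. An alternative I would try if the matching fails is a principal eigenfunction of the linearized operator on a large bounded interval in the frame $x-\sigma t$ straddling the interface, shown to have positive principal eigenvalue precisely when $\sigma<s_{1,nlp}^*$.

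\emph{Convergence to $1$.} Once $\liminf u>0$ holds uniformly on $0\le x\le(s_{1,nlp}^*-\ep)t$, the limit $u\to1$ follows as in the proof of Theorem~\ref{thm:lbsf}. Any limit along $t_n\to\infty$ of shifted solutions is a bounded entire solution with positive infimum. It solves either the Fisher-KPP equation with rate $1-a$, or (near the interface) a limiting shifted equation. In both cases the Liouville result of~\cite{bhn} forces the limit to be the positive equilibrium, so the limit is $1$ wherever the full rate $1$ applies. The intended normalization of the statement is to be rechecked here, since behind $ct$ the natural equilibrium is $1-a$.
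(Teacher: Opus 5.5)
Your upper bound uses the same two-piece exponential ansatz as the paper. Fixing $\mu=s_2^*/(2d_1)$ (rather than $c/2$) and minimizing $\sigma(\lambda)$ under the resulting constraint $\lambda\le \frac{s_2^*}{2d_1}-\sqrt{r_1a_2/d_1}$ does recover both the threshold of case $(iii)$ and the formula of case $(ii)$. Your remark that the limit behind $s_2^*t$ is $1-a_2$ rather than $1$ is also correct; the statement and the paper's sketch share that slip.

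The genuine gap is the lower bound in case $(ii)$. You rightly call it the main obstacle, but the proposed mechanism cannot work. There $\sigma=s_{1,nlp}^*-\ep>2\sqrt{d_1r_1(1-a_2)}$, so $d_1\lambda^2-\sigma\lambda+r_1(1-a_2)$ has only real roots. For a lobe $e^{-\lambda_r z}\cos(\omega z)$ with $z=x-\sigma t$, the $\sin$-term forces $\lambda_r=\sigma/(2d_1)$, and the remaining coefficient $r_1(1-a_2)-\frac{\sigma^2}{4d_1}-d_1\omega^2$ is negative. So no compactly supported profile moving at speed $\sigma$ behind the interface can be a subsolution; the Dirichlet principal eigenvalue has the wrong sign on every interval. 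Moreover, in the $\sigma$-frame the interface recedes at speed $s_2^*-\sigma$, so a truncated left lobe loses contact with any piece ahead of the interface, and there is no kink left to match. Your eigenfunction fallback is ill-posed for the same reason. In the frame $x-\sigma t$ no fixed interval straddles the interface, while in the interface frame the principal eigenvalue does not depend on $\sigma$ and cannot detect the threshold. Your case $(i)$ argument has the same hole when $s_2^*=s_1^*$, since the window of speeds $(s_2^*,s_1^*)$ is then empty.

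The missing idea is that the left piece must keep a real exponential tail all the way to the interface, with compactness produced only ahead of it. The paper takes $\underline u_1=\max\{0,\eta_1e^{-\lambda z}-\eta_2e^{-(\lambda+\delta)z}\}$, with $\lambda$ the smaller root and $\lambda+\delta$ below the larger one. This is a subsolution of $u_t=d_1u_{xx}+r_1u(1-a_2-u)$, but not a compactly supported one. Ahead of the interface it uses $\underline u_2=\eta e^{-\lambda(s_2^*-\sigma)t}e^{-\frac{s_2^*}{2d_1}y}\sin(\omega y)$ for $y=x-s_2^*t\in(0,\pi/\omega)$. Its amplitude \emph{decays}, at exactly the rate of the left tail at $x=s_2^*t$. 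It is a subsolution of the rate-$r_1$ equation as soon as $\lambda(s_2^*-\sigma)+r_1-\frac{(s_2^*)^2}{4d_1}-d_1\omega^2>0$, up to the small nonlinear term. For small $\omega$ this is exactly $g(\sigma)<0$, i.e. $\sigma<s_{1,nlp}^*$ --- your supersolution constraint reversed. The two pieces are glued by $\max\{\underline u_1,\underline u_2\}$ on the strip $0\le y\le\pi/(2\omega)$, where $\underline u_1$ is still a subsolution because the growth rate is larger there. One takes $\eta_1\ll\eta$ so that $\underline u_2$ dominates at $y=\pi/(2\omega)$. No derivative matching at the interface is needed, the support is compact at each time, and comparison yields $\liminf_{t\to\infty}u(\sigma t+z_1,t)>0$.
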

Case~$(ii)$ is what is referred to as the nonlocal pulling phenomenon. Indeed, in that case the spreading speed is strictly less than $s_2^*$, i.e. the front remains in the zone where $H (s_2^* t -x) = 0$; yet the spreading speed is also strictly larger than $2 \sqrt{d_1 r_1 (1-a_2)}$. That is, the front moves faster than expected from its surrounding conditions. We point out that, for the sake of convenience, this spreading speed is denoted $s_{1,nlp}^*$ regardless of whether we are in the nonlocally pulled case.

The proof in~\cite{ly22} relies on a Hopf-Cole transform and a Hamilton-Jacobi method. Since it is more in line with our comparison arguments, let us give some key elements of an alternative approach using sub- and super-solutions in the spirit of~\cite{fgh22,GGM,gl19}. Those sub- and super-solutions will indeed play a role in the proofs of our main results. 

\begin{proof} For the sake of brevity, we omit the proof of case~$(i)$, where the solution spreads faster than the heterogeneity and basically the same sub- and supersolutions as in the homogeneous case (see e.g.~\cite{aw75}) can be used. Thus, we will assume below that
	$$s_2^* > s_1^* ,$$
which in any case is the only parameter range where the nonlocal pulling phenomenon occurs and that is relevant to our analysis.
	
	First of all, let us present the formal computation from which $s_{1,nlp}^*$ arises. As in the classical homogeneous Fisher-KPP case, it consists in looking for exponential ansatzes solving the linearized problem. Due to the heterogeneity, we will look for it in the form
	$$U_{ansatz} = e^{-\lambda(x-c t)},$$
	for $x \leq s_2^* t$, but
	$$U_{ansatz} = e^{-\lambda (s_2^* - c) t} \times e^{-\mu (x-s_2^* t)},$$
	for $x \geq s_2^* t$ (the first factor insures continuity as $x =s_2^* t$), where $c, \lambda,\mu > 0$. The minimal admissible $c$ will be the spreading speed $s^*_{1,nlp}$ we are looking for. Noticing that $H\geq 0$, a comparison principle directly entails that the spreading speed, if any, is necessarily less than~$s_1^*$. Keeping in mind that we only aim for a formal computation at this stage, we gain some time here and immediately accept that $c \leq 2 \sqrt{d_1 r_1} = s_1^*$ in the above ansatz.
	
	Plugging the ansatz into the linearization of~\eqref{eq:shifting1}, we get both
	$$d_1 \lambda^2 - c \lambda + r_1 (1-a_2)=0,$$
	$$d_1 \mu^2 - s_2^* \mu + r_1  + \lambda (s_2^* - c)= 0.$$
	For $\lambda$, we will pick the smallest positive root, that is
	$$\lambda= \frac{c - \sqrt{c^2 - 4 d_1 r_1 (1-a_2)}}{2d_1}.$$
	Its existence already entails that $c$ must be larger than $2 \sqrt{d_1 r_1 (1-a_2)}$.
	 
	Then for the existence of $\mu$, we find the condition
	$$ g(c):= (s_2^*)^2 - 4 d_1 \left( r_1 + \frac{c - \sqrt{c^2 - 4 d_1 r_1 (1-a_2)}}{2d_1} (s_2^* - c) \right) \geq 0 .$$
	It is straightforward to check that $g'(c) >0 $ at least in the relevant interval 
    $$c \in (2 \sqrt{d_1 r_1 (1-a_2)} , 2 \sqrt{d_1 r_1} ).$$ 
Thus, either 
			$$g(2 \sqrt{d_1 r_1 (1-a_2)}) \geq 0,$$
	in which case the minimal admissible speed for the ansatz is $s_{1,nlp}^{*} = 2 \sqrt{d_1 r_1 (1-a_2)}$, or
		$$ g(2 \sqrt{d_1 r_1 (1-a_2)}) < 0,$$
	in which case $s_{1,nlp}^{*}$ is the unique $c \in (2 \sqrt{d_1 r_1 (1-a_2)} , 2 \sqrt{d_1 r_1})$ such that
		$$ g (s_{1,nlp}^{*}) = 0.$$
	The condition $g(2 \sqrt{d_1 r_1 (1-a_2)}) = 0$ may be rewritten, but we omit the computation, as 
		$$s_2^* = 2 \sqrt{d_1 r_1 (1- a_2)} + 2 \sqrt{d_1 r_1 a_2} =:  \hat{s}.$$
Note that $g(2 \sqrt{d_1 r_1 (1-a_2)}) > 0$ if $s_2^* > \hat{s}$. Hence $s_{1,nlp}^{*}=2 \sqrt{d_1 r_1 (1-a_2)}$ if $s_2^*>\hat{s}$.
	
When $s_2^*<\hat{s}$, 
then $g(2 \sqrt{d_1 r_1 (1-a_2)}) < 0$ and another lengthy computation eventually leads one to $g (s_{1,nlp}^{*}) = 0$ with 
\be\label{gg1}
s_{1,nlp}^* = s_2^* - \frac{ \frac{(s_2^*)^2}{4} - d_1 r_1}{ \frac{s_2^*}{2} -  \sqrt{d_1 r_1 a_2}}.
\ee
Indeed, writing $c=s_2^*-\Gamma$, it follows from $g(c)=0$ that $\Gamma$ satisfies
\beaa
\left[\left(\frac{s_2^*}{2}\right)^2-d_1r_1a_2\right]\Gamma^2-s_2^*\left[\left(\frac{s_2^*}{2}\right)^2-d_1r_1\right]\Gamma
+\left[\left(\frac{s_2^*}{2}\right)^2-d_1r_1\right]^2=0.
\eeaa
Then the quadratic formula gives
\beaa
\Gamma_\pm=\frac{\left[\left(\frac{s_2^*}{2}\right)^2-d_1r_1\right]\left[\left(\frac{s_2^*}{2}\right)\pm \sqrt{d_1r_1a_2}\right]}{\left(\frac{s_2^*}{2}\right)^2-d_1r_1a_2}.
\eeaa
This implies that $s_{1,nlp}^*$ given in \eqref{gg1} is indeed the unique $c \in (2 \sqrt{d_1 r_1 (1-a_2)} , 2 \sqrt{d_1 r_1})$ such that
		$g (c) = 0$.

	In other words, $s_{1,nlp}^*$ as defined in Theorem~\ref{thm:nlp} is the minimal admissible speed for the ansatz~$U_{ansatz}$. Still, it remains to check that $s_{1,nlp}^*$ indeed predicts correctly the asymptotic spreading speed of the solution of~\eqref{eq:shifting1}.\medskip	
	
On the one hand, consider the case when $s_2^* \geq 2 \sqrt{d_1 r_1 (1-a_2)} + 2 \sqrt{d_1 r_1 a_2}$. Then one may take
			$$ c = 2 \sqrt{d_1 r_1 (1-a_2)}, \quad \lambda = \frac{c}{2d_1} , \quad  \mu = \frac{s_2^* + \sqrt{g(c)}}{2 d_1},$$
		and check that the resulting~$U_{ansatz}$ is a supersolution of~\eqref{eq:shifting1}. Indeed, the wanted differential inequality is satisfied both when $x < s_2^* t$ and $x > s_2^* t$. In order for $U_{ansatz}$ to be a generalized supersolution, it remains to verify that the left spatial derivative at $x =s_2^* t$ is larger than the right spatial derivative. This is equivalent to~$\mu \geq \lambda$, which is eventually confirmed after an elementary computation. 
		Up to a spatial shift, we may assume that $U_{ansatz}\geq u_0$, and infer by comparison that~$u$ spreads at most with speed~$2 \sqrt{d_1 r_1 (1-a_2)}$. 
		
Moreover, for any $c < 2 \sqrt{d_1 r_1 (1-a_2)}$, one may check that there exists $\omega, \eta_0>0$ such that, for any $ \eta \in (0,\eta_0)$,
			$$\underline{u} (x,t) = 
			\left\{
			\begin{array}{ll}
				\eta e^{-\frac{c}{2 d_1} (x-ct))} \sin ( \omega (x-ct)) & \mbox{ if } 0 < x -ct < \frac{\pi}{\omega}, \vspace{3pt}\\
				0 & \mbox{ otherwise},
			\end{array}
			\right.$$
	is a subsolution of \eqref{eq:shifting1}. As a matter of fact, this is simply a subsolution of a homogeneous Fisher-KPP equation, i.e.
	$$\underline{u}_t \leq d_1 \underline{u}_{xx} + r_1 \underline{u} (1-a_2 - \underline{u} ) .
	$$
	Next, by the strong maximum principle, $u (\cdot, t=1) >0$ and we can pick $\eta >0$ small enough so that $\underline{u} (\cdot, 1)\leq u (\cdot, 1)$. This proves that
				$$\forall c< 2 \sqrt{d_1 r_1 (1-a_2)} , \ \exists x \in \mathbb{R}, \quad \liminf_{t \to \infty} u (x+ct,t) > 0.$$
	One may further find that 
				$$\liminf_{t \to +\infty}  \inf_{ 0 \leq x \leq (2 \sqrt{d_1 r_1 (1-a_2)} - \varepsilon ) t} u (x,t)  >0 , $$
	and then that $u$ converges to~$1$ in the same sets as $t \to +\infty$, due to the fact that $1$ is the only entire in time solution of the Fisher-KPP equation which is bounded from both above and below by positive constants. We omit the details but refer to the second step of the proof of Proposition~\ref{prop:u_strongfast1} below for a similar argument.
	
On the other hand, when $s_1^* < s_2^*<2 \sqrt{d_1 r_1 (1-a_2)} + 2 \sqrt{d_1 r_1 a_2}$, one may take 
		$$ c= s_{1,nlp}^*, \quad \lambda= \frac{c - \sqrt{c^2 - 4 d_1 r_1 (1-a_2)}}{2d_1} \leq  \mu = \frac{s_2^*}{2d_1},$$
and again check that $U_{ansatz}$ is a supersolution.

	The subsolution is a little bit more intricate. We define, for $c \in \left( 2 \sqrt{d_1 r_1 (1-a_2)} , s_{1,nlp}^* \right)$,
	$$ \underline{u}_1 (x,t) :=  \max \{ 0, \eta_1 e^{-\lambda (x-ct) } -  \eta_2 e^{- (\lambda + \delta)  (x- ct)} \},$$
	where $\lambda$ is the smallest positive root of 
	$$d_1 \lambda^2 - c\lambda_1+ r_1 (1-a_2)=0,$$
	and $\delta >0$ is small so that $\lambda+ \delta$ lies below the other positive root. One may check that, with an appropriate choice of positive but arbitrarily small $\eta_1,\eta_2$, the resulting function satisfies
	$$\underline{u}_{1,t} \leq d_1 \underline{u}_{1,xx} + r_1 \underline{u}_1 (1 - a_2 - \underline{u}_1),$$
	thus it is a generalized subsolution of~\eqref{eq:shifting1}. Its exponential decay as $x \to +\infty$ prevents us, however, at this stage to compare it with the solution~$u$ of~\eqref{eq:shifting1}.
	
	This leads us to also define 
	$$\underline{u}_2 (x,t) :=  \left\{
			\begin{array}{ll}
		 \eta e^{- \lambda (s_2^* - c)t} \times e^{-\frac{s_2^*}{2d_1} (x-s_2^* t)} \sin \left(\omega (x-s_2^*t) \right) & \mbox{ if } 0 < x - s_2^* t < \frac{\pi}{\omega}, \vspace{3pt}\\
		0 & \mbox{ otherwise}.
	\end{array}
	\right.$$
	For a well-chosen $\omega$, and any $\eta >0$ small enough,
	this satisfies
	$$\underline{u}_{2,t} \leq d_1 \underline{u}_{2,xx} + r_1 \underline{u}_2 (1 - \underline{u}_2),$$
	and since its support is included in the subdomain~~$\{ x \geq s_2^* t\}$, it is also a subsolution of~\eqref{eq:shifting1}.
	
	Finally, an appropriate choice of $\eta$, $\eta_1$ and $\eta_2$ allows us to define a continuous function 
	$$\underline{u} (x,t) = \left\{ 
	\begin{array}{ll}
		\underline{u}_1 (x,t) & \mbox{ if } x - s_2^* t < 0 , \vspace{3pt}\\ 
		\max  \{ \underline{u}_1 (x,t), \underline{u}_2 (x,t) \} & \mbox{ if } 0 \leq x -s_2^* t \leq \frac{\pi}{2 \omega}, \vspace{3pt}\\
		\underline{u}_2 (x,t) & \mbox{ if } x-s_2^* t > \frac{\pi}{2\omega}.
\end{array}\right.$$	
 	By construction, it is a (generalized) subsolution of~\eqref{eq:shifting1}, whose spatial support at any given time is compactly supported. Thus, up to decreasing $\eta, \eta_1, \eta_2$, we may apply the comparison principle and conclude that the infimum limit of $u$ in the moving frame with speed $c$ is positive. As before, the convergence to~$1$ eventually follows. This ends this (sketch of a) proof of Theorem~\ref{thm:nlp}.
\end{proof}

\section{Preliminaries : the homogeneous competition system}

This section gathers further preliminaries, related to the study of the homogeneous two-species competition system. In particular we will go more in depth into the results of~\cite{gl19}, the spreading speed~$s_{1,GL}^*$ emerging in the competition system~\eqref{Ph}, and even some elements of their proof which we will make use of later. It turns out that system~\eqref{Ph} shares common features with the scalar equation with a shifting heterogeneity, and that $s_{1,GL}^*$ is defined in a similar way as~$s_{1,nlp}^*$ in the previous section. Finally, we will end this section with some useful Liouville type result on positive entire in time solutions of~\eqref{Ph}.

\subsection{A comparison principle for the competition system}\label{sec:comparison}

Let us first briefly recall that two-species competition systems satisfy a comparison principle. The results from the literature which we will state below make an extensive use of this principle, and so will we. For the sake of convenience, we state it here in the homogeneous case of~\eqref{Ph}, but it applies to the heterogeneous case of~\eqref{P}. 
\begin{proposition}
	Let $(\underline{u}, \overline{v})$ and $(\overline{u},\underline{v})$ be respectively a lower-upper and an upper-lower solutions of~\eqref{Ph}, that is,
$$
	\bss
	\underline{u}_t \leq d_1 \underline{u}_{xx}+r_1 \underline{u} [1 - \underline{u} -a_2 \overline{v} ],\quad x\in\bR,\, t>0,\vspace{3pt}\\
	\overline{v}_t \geq d_2 \overline{v}_{xx}+r_2 \overline{v} [1-b_1 \underline{u}- \overline{v} ],\quad x\in\bR,\, t>0,\\
	\ess
$$
and
$$
\bss
\overline{u}_t \leq d_1 \overline{u}_{xx}+r_1 \overline{u} [1- \overline{u} -a_2 \underline{v} ],\quad x\in\bR,\, t>0,\vspace{3pt}\\
\underline{v}_t \geq d_2 \underline{v}_{xx}+r_2 \underline{v} [1 -b_1 \overline{u}- \underline{v} ],\quad x\in\bR,\, t>0.\\
\ess
$$
If $ \underline{u} (t=0) \leq \overline{u} (t=0)$ and $\overline{v} (t=0) \geq \underline{v} (t=0)$, then the same inequalities hold true for any positive times.
\end{proposition}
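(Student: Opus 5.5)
The plan is to reduce the statement to a maximum principle for a linear cooperative parabolic system on $\bR$. The second pair of inequalities should be read with the directions reversed: $\overline{u}_t \geq \dots$ and $\underline{v}_t \leq \dots$, which is the meaning of an upper-lower solution. Throughout I will assume that all four functions are nonnegative, bounded and continuous, and smooth enough (or generalized sub/supersolutions in the usual sense, i.e. maxima/minima of classical ones) for the comparison argument. This is the setting in which the principle is used later in the paper.

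\textbf{Linearized system for the differences.} Set $U := \overline{u} - \underline{u}$ and $V := \overline{v} - \underline{v}$. Subtracting the inequalities and using the algebraic identity
$$\overline{u}(1-\overline{u}-a_2\underline{v}) - \underline{u}(1-\underline{u}-a_2\overline{v}) = (1-\overline{u}-\underline{u}-a_2\underline{v})\,U + a_2\underline{u}\,V,$$
together with its analogue for the $v$-equation, gives
$$U_t - d_1 U_{xx} \geq c_{11} U + c_{12} V, \qquad V_t - d_2 V_{xx} \geq c_{21} U + c_{22} V .$$
The coefficients are
$$c_{11} = r_1(1-\overline{u}-\underline{u}-a_2\underline{v}), \quad c_{12} = r_1 a_2 \underline{u}, \quad c_{21} = r_2 b_1 \underline{v}, \quad c_{22} = r_2(1-\overline{v}-\underline{v}-b_1\underline{u}).$$
All four coefficients are bounded. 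The off-diagonal ones are nonnegative because $\underline{u},\underline{v}\geq 0$, so the system is cooperative. In the heterogeneous case~\eqref{P}, the constant $1$ is replaced by $\alpha_i(x-st)$, which is bounded, and the structure is unchanged. The initial data satisfy $U(\cdot,0),V(\cdot,0)\geq 0$.

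\textbf{Maximum principle on the unbounded domain.} Let $L$ exceed the sup-norms of all the $c_{ij}$. For small $\epsilon>0$ I will consider
$$\tilde U := U + \epsilon (1+x^2)e^{Kt}, \qquad \tilde V := V + \epsilon(1+x^2)e^{Kt},$$
with $K$ large compared to $L$, $d_1$ and $d_2$. For $K$ large, $\tilde U$ and $\tilde V$ are strict supersolutions of the same cooperative inequalities, since $(1+x^2)$ absorbs the Laplacian up to a constant factor. Because $U,V$ are bounded, $\tilde U,\tilde V>0$ for $|x|$ large on any finite time interval $[0,T]$.

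I will then argue by contradiction at the first time $t_0$ where $\min\{\tilde U,\tilde V\}$ hits zero, say $\tilde U(x_0,t_0)=0$. At that point $\tilde U_t\leq 0$, $\tilde U_{xx}\geq 0$ and $\tilde V(x_0,t_0)\geq 0$. The cooperative structure then gives $c_{12}\tilde V\geq 0$, which contradicts the strict inequality. Letting $\epsilon\to 0$ yields $U,V\geq 0$ on $[0,T]$ for every $T$.

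For generalized sub/supersolutions, I will apply the touching argument to the smooth piece that realizes the max or min at the contact point. This is the standard device, since such functions cannot be touched from the wrong side at a kink.

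\textbf{Main obstacle.} The only real difficulty I expect is bookkeeping. Choosing $K$ to make the perturbation a strict supersolution requires the bound $|\partial_{xx}(1+x^2)|\leq 2 \leq 2(1+x^2)$ together with the boundedness of the $c_{ij}$. The unbounded domain is handled entirely by the quadratic weight.
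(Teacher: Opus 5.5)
Your proof is correct and is essentially the paper's argument. The paper substitutes $w=1-v$ to make the system cooperative and cites the comparison principle for cooperative systems; your difference system for $(\overline u-\underline u,\,\overline v-\underline v)$ is exactly the linearization of that cooperative system, since $\overline v-\underline v=(1-\underline v)-(1-\overline v)$, and your weighted maximum principle on $\bR$ supplies the details the paper omits (you are also right that the inequalities for the upper-lower pair must be reversed).

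One minor correction: the lower solution $\underline U$ of Proposition~\ref{prop:gl_subsol} becomes negative on a right half-line, so assuming all four functions nonnegative does not match how the principle is used later. It is better to split $\underline u\,\overline v-\overline u\,\underline v=\overline u\,V-\overline v\,U$ (and analogously in the $v$-equation), which gives off-diagonal coefficients $r_1a_2\overline u$ and $r_2b_1\overline v$ and only requires $\overline u,\overline v\ge 0$.
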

This can be immediately derived from the comparison principle for cooperative systems by letting $w = 1 -v$ and rewriting the system in terms of~$u$ and~$w$. We omit the details.

\subsection{Spreading of solutions of the homogeneous competition system}\label{sec:prelim_gl}

Now we elaborate on the spreading Theorem~\ref{thm:gl}, not only by characterizing the speed $s_{1,GL}^*$ (see Theorem~\ref{thm:gl_add} below) but also by giving some key elements of the proof from \cite{gl19}. Indeed, because we cannot directly compare solutions of~\eqref{P} and~\eqref{Ph}, these elements will turn out to be necessary for the sake of our own arguments. \medskip 

First, as we mentioned in the introduction, when $s_1^* < s_2^*$ and both species have initially compact support, then the nonlocal pulling phenomenon may occur in the competition system. As a matter of fact, the heuristics is exactly the same as at the start of Section~\ref{sec:prelim_nlp}, where we merely replaced $v$ in the $u$-equation by a rough approximation of an invading front with speed $s_2^*$.
	
	In particular, when nonlocally pulled, the spreading speed $s_{1,GL}^*$ of the $u$-component of solutions of system~\eqref{Ph} turns out to follow the same formula as $s_{1,nlp}^*$. However, the nonlocally pulled case in~\eqref{Ph} occurs in a smaller parameter range. In short, the definition of $s_{1,nlp}^*$ in Theorem~\ref{thm:nlp} involved $2 \sqrt{d_1 r_1 (1-a_2)}$, which is the minimal speed of traveling fronts of the Fisher-KPP equation
		$$u_t = d_1 u_{xx} + r_1 u (1 -u - a_2),$$
	that is of the homogeneous equation behind the shifting heterogeneity in \eqref{eq:shifting1}. By analogy, to define $s_{1,GL}^*$ one should roughly replace $2 \sqrt{d_1 r_1 (1-a_2)}$ by the minimal speed of traveling front solutions of \eqref{Ph} connecting~$(1,0)$ and~$(0,1)$. The latter speed, whose existence was established by Kan-on in~\cite{k97}, will be denoted here by~$s_{1,mtf}^*$. It is sometimes equal to~$2 \sqrt{d_1 r_1 (1-a_2)}$, in which case $s_{1,mtf}^*$ is said to be linearly determined, and the corresponding traveling front is said to be pulled; in such a situation, then $s_{1,GL}^* = s_{1,nlp}^*$. However, it may be that $s_{1,mtf}^* > 2 \sqrt{d_1 r_1 (1- a_2)}$, in which case $s_{1,mtf}^*$ is said to be nonlinearly determined, and the corresponding traveling front is said to be pushed; in such a situation, then $s_{1,GL}^*$ may or may not differ from $s_{1,nlp}^*$, depending on whether nonlocal pulling also occurs (recall that $s_{1,nlp}^*$ denotes the spreading speed of~\eqref{eq:shifting1} regardless of whether nonlocal pulling actually happens). Let us highlight that one should distinguish, on the one hand, the minimal traveling front connecting $(1,0)$ and $(0,1)$, and on the other hand, the spreading front of~$u$ emerging from compactly supported initial data. For instance, it may be that the former is pushed while the latter is nonlocally pulled.

We refer again to~\cite{llw02} (and other references mentioned in the introduction section) for details on the linear determinacy or not of system \eqref{Ph}. Ultimately, one may rewrite the formulae in \cite{gl19} for the spreading speed $s_{1,GL}^*$ as follows.
	\begin{theorem}[\cite{gl19}, Theorem~1.1]\label{thm:gl_add}
		Assume that $s_2^* > s_1^*$ and let $s_{1,GL}^*$ be as in Theorem~\ref{thm:gl} above. Then
							$$s_{1,GL}^* = \max  \{ s_{1,nlp}^*, s_{1,mtf}^* \},$$
			where $s_{1,nlp}^*$ was defined in Theorem~\ref{thm:nlp} above, and $s_{1,mtf}^* \geq 2 \sqrt{d_1 r_1 (1-a_2)}$ is the minimal speed of traveling fronts of \eqref{Ph} connecting $(1,0)$ and $(0,1)$, i.e. the minimal $c$ such that there exists an entire in time solution $(u,v)(x,t)= (U,V) (x- ct)$ with
			$$U(-\infty) =1 > U (\cdot) > U (+\infty) = 0, \quad V(-\infty) = 0 < V (\cdot) < V (+\infty) = 1.$$
	\end{theorem}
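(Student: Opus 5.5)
Since this statement is a reformulation of Theorem~1.1 of~\cite{gl19}, the most economical route is to take the explicit case-by-case formula for $s_{1,GL}^*$ given there and check that it coincides with $\max\{s_{1,nlp}^*, s_{1,mtf}^*\}$. In~\cite{gl19}, the speed is expressed through the minimal front speed $s_{1,mtf}^*$ of~\cite{k97} and the quantity
$$s_2^* - \frac{\frac{(s_2^*)^2}{4} - d_1 r_1}{\frac{s_2^*}{2} - \sqrt{d_1 r_1 a_2}} .$$
I would split into the three regimes of Theorem~\ref{thm:nlp}: $s_2^* \le s_1^*$ (excluded here), $s_2^* \in (s_1^*, \hat s)$ and $s_2^* \ge \hat s$, with $\hat s = 2\sqrt{d_1r_1(1-a_2)}+2\sqrt{d_1r_1a_2}$.

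In each regime I would compare $s_{1,mtf}^*$ with the root of $g(c)=0$ from the proof of Theorem~\ref{thm:nlp}. The key fact is the monotonicity $g'>0$ on $(2\sqrt{d_1r_1(1-a_2)}, s_1^*)$. It implies that the admissibility condition $g(c)\ge 0$ holds at a speed $c$ exactly when $c \ge s_{1,nlp}^*$. Hence the condition in~\cite{gl19} reading ``the formula applies if $s_{1,mtf}^*$ is smaller than the root, and otherwise the speed is $s_{1,mtf}^*$'' becomes exactly a maximum. The quadratic computation for $\Gamma_\pm$ already given identifies the root with~\eqref{gg1}.

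To make the argument self-contained rather than purely a citation, I would also sketch the two inequalities directly by comparison.

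\textbf{Lower bound.} By Theorem~\ref{thm:gl} and Corollary~\ref{thm:scalar_shifting1}, for any $\delta>0$ and large $t$ we have $v \le 1$ everywhere and $v \le \delta$ for $x \ge (s_2^*+\delta)t$. Hence $u$ is a supersolution of the perturbed equation~\eqref{eq:shifting1} with the $\delta$-shifted Heaviside term. Theorem~\ref{thm:nlp}, together with continuity of $s_{1,nlp}^*$ in the parameters, then gives $s_{1,GL}^* \ge s_{1,nlp}^*$. The bound $s_{1,GL}^* \ge s_{1,mtf}^*$ follows from the classical fact that, for the cooperative system in $(u,1-v)$, the spreading speed from compactly supported data equals the minimal front speed of the invasion $(0,1)\to(1,0)$ (Lui/Weinberger theory). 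One compares $u$, once it is close to $(1,0)$ on a large interval behind $s_{1,GL}^*t$, with compactly supported lower-upper solutions. Equivalently, if $u$ spread at a speed below $s_{1,mtf}^*$, a front with a smaller speed would exist.

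\textbf{Upper bound.} For $c>\max\{s_{1,nlp}^*, s_{1,mtf}^*\}$, I would build an upper-lower solution $(\overline u,\underline v)$. For $\overline u$, take the minimum of $1$ and a glued function: the front profile $U(x-ct)$ (of speed $c>s_{1,mtf}^*$, hence with slow exponential decay rate $\lambda$) behind $s_2^*t$, and the second exponential $e^{-\lambda(s_2^*-c)t}e^{-\mu(x-s_2^*t)}$ ahead of $s_2^*t$. For $\underline v$, take a subsolution of the $v$-equation spreading at speed $s_2^*-\delta$, for instance $V(x-ct)$ capped by a compactly supported KPP subsolution.

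The main obstacle is this gluing. One must check the derivative jump condition $\mu \ge \lambda$ at $x=s_2^*t$, which is handled exactly as in the proof of Theorem~\ref{thm:nlp}. More delicately, one must verify the $\overline u$ inequality in the transition zone where $\underline v$ is neither near $0$ nor near $1$. There I would try first to use the tail $\lambda$-decay of $U$ and absorb errors by slightly increasing $c$. Since this is precisely the technical core of~\cite{gl19}, in the paper I would ultimately defer to that reference for this step.
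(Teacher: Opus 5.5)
Your proposal follows the paper's route: the statement is quoted from \cite{gl19} and reduces to identifying Girardin--Lam's formula with $\max\{s_{1,nlp}^*,s_{1,mtf}^*\}$. The bound $s_{1,GL}^*\ge s_{1,nlp}^*$ is obtained exactly as you describe, by comparison with the $\delta$-perturbed version of \eqref{eq:shifting1}. The upper bound is taken from Girardin--Lam's upper-lower solution (Proposition~\ref{prop:gl_15}), and you rightly defer to it: your illustrative $\underline v$, namely $V(x-ct)$ ``capped'' by a KPP subsolution, would not itself be a subsolution, since a minimum of subsolutions generally is not one.

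The one difference is the bound $s_{1,GL}^*\ge s_{1,mtf}^*$. You invoke the abstract Lewis--Li--Weinberger argument, which the paper names as the original source. The paper instead builds, in Proposition~\ref{prop:gl_subsol}, an explicit Rothe-type lower-upper solution from the pushed minimal front. It does so because, unlike the abstract approach, this construction can be reused on moving half-lines in the shifting environment.
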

	To summarize, the following cases may happen:
		\begin{itemize}
			\item $s_{1,GL}^* = s_{1,mtf}^* = s_{1,nlp}^* = 2 \sqrt{d_1 r_1 (1-a_2)}$, in which case the spreading front of $u$ may be said to be locally pulled;
			\item $s_{1,GL}^* = s_{1,mtf}^* > 2 \sqrt{d_1 r_1 (1-a_2)}$, in which case the spreading front of $u$ may be said to be pushed;
			\item $s_{1,GL}^* = s_{1,nlp}^* > s_{1,mtf}^*$, in which case the spreading front of $u$ may be said to be nonlocally pulled.
		\end{itemize}
	
	We hope that this theorem, together with Section~\ref{sec:prelim_nlp} and the above heuristics, already shed enough light on~$s_{1,GL}^*$ for the sake of understanding our results. However, as we mentioned above, our arguments will require us delving further into the proofs of Theorems~\ref{thm:gl} and~\ref{thm:gl_add}.
	
	Those proofs, while they share some similarities with that of Theorem~\ref{thm:nlp}, are a lot more intricate due to dealing with the two components of the system, and the possibility of nonlinear speeds, on top of the nonlocal pulling phenomenon. Thus we will only focus on some specific steps of relevance to us. Simplifying it to the extreme, the general strategy still involves the construction of sub- and supersolutions by gluing two ansatzes: on the one hand, a traveling wave solution of system~\eqref{P} with speed $s \geq s_{1,mtf}^*$; on the other hand, an exponential solution of the linearized problem around $(0,0)$ in the moving frame with speed~$s_2^*$. The characterization of $s_{1,GL}^*$ arises from a continuity matching condition.
	
	First, the fact that the $u$-component of solutions of~\eqref{Ph} spreads with speed at least~$s_{1,nlp}^*$ can be proved in the same way as Theorem~\ref{thm:nlp}, or even as its corollary. Indeed, by a relatively straightforward comparison argument, one may notice first that
		$$v (\cdot, \cdot) \leq 1, \qquad \limsup_{t \to +\infty} \sup_{|x| \geq (s_2^* + \ep) t} |v(x,t)| = 0,$$
	and plugging this information in the $u$-equation, that 
		$$u_t \geq d_1 u_{xx} + r_1 u [ 1-\delta - u - a_2 H ((s_2^* + \delta)t-x) - a_2 \delta H (x-( s_2^* + \delta) t)] ,$$
	for some arbitrarily small~$\delta >0$ and up to some shift. This equation is of the same type as~\eqref{eq:shifting1}, and applying Theorem~\ref{thm:nlp}, one gets a lower bound for the spreading speed of~$u$, which letting $\delta \to 0$ approaches $s_{1,nlp}^*$. 
	
	Next, when $s_{1,mtf}^* > s_{1,nlp}^*$, the fact that the $u$-component spreads at least with speed~$s_{1,mtf}^*$ actually traces back to~\cite{llw02,wll02}. Their proof relies on an abstract dynamical system approach, which as far as we know does not fit our need to adapt it to the spatially heterogeneous framework of~\eqref{P}. Thus, we suggest a different approach by a lower-upper solution, in the spirit of~\cite{rothe} for the scalar monostable case.
\begin{proposition}\label{prop:gl_subsol}
		{Assume that $s_{1,mtf}^* >s_{1,nlp}^*$.} 
Let $(U,V) (x- s_{1,mtf}^*t)$ be a traveling front with minimal speed~$s_{1,mtf}^*$ of \eqref{Ph}, connecting $(1,0)$ and $(0,1)$. Then:
		\begin{enumerate}[$(i)$]
			\item the functions $U,V$ are respectively decreasing and increasing in space;
			\item up to some shift, the function $U$ satisfies
                     $$U (z) \sim_{z \to + \infty} e^{-\lambda^* z},$$
                  	where $\lambda^*$ is the larger positive root of
						$$ d_1 (\lambda^*)^2 - s_{1,mtf}^* \lambda^* + r_1 (1-a_2)=0;$$
			\item define $\chi$ a smooth non-increasing function so that
					$$\forall z \leq 0, \ \chi (z) =1, \qquad \forall z \geq 1 , \ \chi (z) = 0,$$
					and 
					$$\underline{U} (z) = U (z ) - \delta_1 \chi ( z+M+1) - \delta_1 \chi ( M+1 -z ) e^{-\lambda z} ,$$
					$$\overline{V} (z) = V (z ) + \delta_1 \chi ( z+M+1) + \delta_2 \chi ( M+1 - z),$$
				then for well-chosen $M,\delta_1, \delta_2 >0$, $\lambda \in(0, \lambda^*)$, and $\ep >0$ arbitrarily small, the pair
					$$ \left(\underline{U} (x- (s_{1,mtf}^* - \ep) t), \overline{V} (x - (s_{1,mtf}^*-\ep)t ) \right)$$
				is a lower-upper solution of \eqref{Ph}.
		\end{enumerate}
	\end{proposition}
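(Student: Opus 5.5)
The plan is to take $(i)$ and $(ii)$ as qualitative properties of the minimal front, mostly available in the literature, and to spend the real effort on the verification in $(iii)$. That verification is done region by region in the moving variable $z = x-(s_{1,mtf}^*-\ep)t$.

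\emph{Steps $(i)$ and $(ii)$.} For $(i)$, monotonicity of fronts connecting $(1,0)$ and $(0,1)$ in the strong-weak case is classical (Kan-on~\cite{k97}). It can also be recovered by a sliding argument in the cooperative variables $(U,1-V)$, using the stability of both end states for the ODE restricted to the invaded and invading sides.

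For $(ii)$, linearizing the $U$-equation around $(0,1)$ shows that, as $z \to +\infty$, $U$ decays like $e^{-\lambda_- z}$ or $e^{-\lambda^* z}$, where $\lambda_- < \lambda^*$ are the two roots of $d_1\lambda^2 - s_{1,mtf}^*\lambda + r_1(1-a_2)=0$. The roots are real and distinct because
$$s_{1,mtf}^* > s_{1,nlp}^* \geq 2\sqrt{d_1 r_1(1-a_2)}.$$
One possible degeneracy is the $V$-equation forcing a different rate; this is handled by noting that $1-V$ decays at least like $\min\{e^{-\lambda^* z},\, e^{-\mu z}\}$, where $\mu$ is the fast root associated with the $V$-linearization, so it does not affect the leading order of $U$.

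The key point is that the minimal front cannot have the slow rate $e^{-\lambda_- z}$. If it did, a standard perturbation argument would produce fronts with slightly smaller speeds, because slow-decaying fronts come in an open family; the usual reference is the pushed/pulled dichotomy in~\cite{llw02,h10}. I would quote this rather than reprove it. Up to a shift we then get $U \sim e^{-\lambda^* z}$.

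\emph{Step $(iii)$: the differential inequalities.} Write $c=s_{1,mtf}^*-\ep$ and use the traveling wave equations $-s_{1,mtf}^*U' = d_1U'' + r_1U(1-U-a_2V)$ (and similarly for $V$). Then the operator applied to $\underline U$ equals
$$\ep U' + \big[r_1U(1-U-a_2V) - r_1\underline U(1-\underline U-a_2\overline V)\big] + (\text{terms coming from the }\delta_i\text{ corrections}).$$
The term $\ep U'$ is favourable everywhere since $U'<0$. The same computation for $\overline V$ produces $-\ep V' \ge 0$, which is again favourable because $V'>0$.

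I would split the real line into three regions.

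\emph{Region $z \le -M-1$.} Here $(U,V)$ is close to $(1,0)$, which is linearly stable because $b_1>1$. The corrections $-\delta_1$ in $U$ and $+\delta_1$ in $V$ give leading terms of order
$$-r_1\delta_1(1-a_2)\quad\text{in the $U$-inequality}, \qquad -r_2\delta_1(b_1-1)\quad\text{in the $V$-inequality}.$$
Both signs are right once $M$ is large, so that $U$ is close to $1$ and $V$ is close to $0$.

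\emph{Region $z \ge M+1$.} The correction in $U$ is $-\delta_1 e^{-\lambda z}$ with $\lambda_- < \lambda < \lambda^*$. Its linear contribution is
$$-\delta_1 e^{-\lambda z}\big(-d_1\lambda^2 + c\lambda - r_1(1-a_2)\big) < 0,$$
and the sign is robust for $\ep$ small. Because $U \sim e^{-\lambda^* z}$ with $\lambda<\lambda^*$, the function $\underline U$ becomes negative for large $z$, so I take $\max\{\underline U,0\}$ as a generalized subsolution. The quadratic terms are $O(\delta_1 e^{-\lambda z}U)$, which is negligible since $U$ is small there. For $\overline V$, the $+\delta_2$ correction near $V\approx 1$ gives $-r_2\delta_2\overline V<0$. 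The extra competition term $-a_2\delta_2\underline U$ in the $U$-equation only helps.

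\emph{Middle region $|z| \le M+1$.} This compact region, including the cut-off zones where the $\chi'$ and $\chi''$ terms appear, is the step I expect to be the main obstacle. Here nothing has a sign except $\ep U' \le -\ep\kappa$, where $\kappa = \min_{|z|\le M+2}|U'| > 0$ (and similarly for $V'$). All correction terms are $O(\delta_1+\delta_2)$ with constants depending on $M$ and $\chi$.

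This dictates the order of choices: fix $\lambda$ first, then $M$, then $\ep$ small. Finally choose $\delta_1,\delta_2 \ll \ep\kappa$, with $\delta_2$ tuned against $\delta_1 e^{-\lambda(M+1)}$ in the right cut-off zone so that the mixed terms there keep the right sign. Checking that these constraints are compatible, in particular that the right-zone requirement on $\delta_2$ does not conflict with $\delta_2 \ll \ep\kappa$, is where the actual computation lies. If it fails, I would replace the constant $\delta_2$ by $\delta_2 e^{-\lambda' z}$ for a suitable $\lambda'$.
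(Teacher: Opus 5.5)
The overall architecture matches the paper's. You quote (i)--(ii) and verify (iii) region by region. The traveling-wave identity gives the good terms $-\varepsilon U'\ge 0$ and $-\varepsilon V'\le 0$. All corrections on $|z|\le M+1$ are absorbed by $\varepsilon\eta$, with $\eta=\min_{|z|\le M+1}\min(-U',V')$, and on the right tail you take $\lambda$ strictly between the two roots.

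The gap is on the right tail $z\ge M+1$, where you say the competition term $-a_2\delta_2\underline U$ in the $U$-equation ``only helps''. It hurts wherever $\underline U>0$. In $d_1\underline U''+(s_{1,mtf}^*-\varepsilon)\underline U'+r_1\underline U(1-\underline U-a_2\overline V)\ge 0$, raising $\overline V$ by $\delta_2$ subtracts $r_1a_2\delta_2\underline U$. Moreover $\underline U=U-\delta_1e^{-\lambda z}$ is positive on some $[M+1,Z_{\delta_1}]$ with $Z_{\delta_1}\to+\infty$ as $\delta_1\to 0$, because $\delta_1$ is chosen after $M$.

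The paper controls this term by the linear gain $\gamma\delta_1e^{-\lambda z}$, where $\gamma:=-(d_1\lambda^2-(s_{1,mtf}^*-\varepsilon)\lambda+r_1(1-a_2))>0$. It uses $U\le e^{-\lambda z}$ and $1-V\le \gamma/(2r_1a_2)$ on $[M,\infty)$, which forces an upper bound $\delta_2/\delta_1<\gamma/(2r_1a_2)$. Alternatively you could absorb the term into $-\varepsilon U'$, but that needs $\inf_{z\ge M}(-U'/U)>0$, i.e. asymptotics of $U'$ and not just of $U$. Your plan does neither.

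This also moves the real difficulty. The opposite constraint comes from the $V$-inequality on the whole half-line $z\ge M+1$, not just the cut-off zone. Lowering $\underline U$ by $\delta_1e^{-\lambda z}$ adds $r_2b_1\delta_1e^{-\lambda z}\overline V$. Since $V'\to 0$, there is no $\varepsilon V'$ available to absorb it, so it must be beaten by $r_2\delta_2(1-2V)\le -r_2\delta_2/2$. This gives $\delta_2/\delta_1>4b_1e^{-\lambda M}$. In $(M,M+1)$, by contrast, everything is absorbed by $\varepsilon\eta$ and no tuning is needed.

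So $\delta_2/\delta_1$ must lie in the window $(4b_1e^{-\lambda M},\,\gamma/(2r_1a_2))$. This window is nonempty only if $M$ is taken large in terms of $\lambda$, namely $e^{-\lambda M}<\gamma/(8r_1a_2b_1)$; this is the paper's extra condition on $M$. The compatibility you flagged, a lower bound on $\delta_2$ versus $\delta_2\ll\varepsilon\kappa$, is trivial by shrinking $\delta_1$. The one that actually constrains the construction is missing from your plan.

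Once you add the two-sided bound on $\delta_2/\delta_1$ and this condition on $M$, your order of choices works. Note that $\gamma$ drops by only $\varepsilon\lambda$ from its value at $\varepsilon=0$, so fixing $M$ against half that value suffices for all small $\varepsilon$.
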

	Notice that in particular, on the one hand $\underline{U} \leq 1 -\delta_1$ and $V \geq \delta_1$ on a left half-line, and on the other hand $\underline{U}\leq 0$ and $\overline{V} \geq 1$ on a right half-line. Therefore, provided that a solution~$(u,v)$ tends to $(1,0)$, say locally uniformly, one may then apply a comparison principle after some potentially large time and on a right half-line, and find that the solution spreads with speed at least $s_{1,mtf}^* - \ep$, hence~$s_{1,mtf}^*$ by letting $\ep \to 0$. While we do not detail that part of the proof of Theorems~\ref{thm:gl} and~\ref{thm:gl_add} here, we will in fact proceed in this exact way for the spreading of solutions of the heterogeneous competition system~\eqref{P}.
	\begin{proof} The first two items are proved respectively in~\cite[Proposition~A.4]{gl19} and~\cite[Theorem~2.16]{XZ}. The fact that the function~$U$ selects the faster exponential decay comes from the assumption that $s_{1,mtf}^* > s_{1,nlp}^*$, which together with $s_{1,nlp}^* \geq 2 \sqrt{d_1 r_1 (1-a_2)}$ ensures we are here in the nonlinearly determined/pushed case.
		
	Thus we only prove the third item. For conciseness, in this proof we simply denote $c = s_{1,mtf}^*$. Recalling again that $s_{1,mtf}^* > s_{1,nlp}^* \geq 2 \sqrt{d_1 r_1 (1-a_2)}$, we may fix $\ep >0$ so that $$c- \ep  > 2 \sqrt{d_1 r_1 (1- a_2)}.$$ 
	Then we may fix $\lambda< \lambda^*$ such that
		$$d_1 \lambda^2 - (c- \ep ) \lambda + r_1  (1-a_2)  < 0 .$$
	Due to $a_2 < 1 < b_1$, we may pick $\delta_0\in(0,1)$ such that
			$$ \frac{1 +\delta_0 }{2 - a_2}, \frac{b_1  + 1 + \delta_0 b_1}{2 b_1} < 1, \quad \frac{1}{2} - \frac{b_1 \delta_0}{2 b_1 - 2}  >0.$$
	Then we pick $ M >1$ such that 
\be\label{M-}
U (z) \geq  \max \left\{ \frac{1+ \delta_0}{2- a_2} , \frac{b_1  + 1 + \delta_0 b_1 }{2 b_1}   \right\},\;
			V (z) \leq \frac{1}{2} - \frac{b_1 \delta_0}{2 b_1 - 2},\quad\mbox{ for $z \leq -M$,}
\ee
and
\be\label{M+}
U (z) \leq e^{-\lambda z},\;
			V (z) \geq  \max \left\{ \frac{3}{4},  1  + \frac{ d_1 \lambda^2 - (c - \ep) \lambda + r_1 (1-a_2) }{2 r_1 a_2} \right\},\quad
\mbox{ for $z \geq M$.}
\ee
Furthermore, up to increasing $M$, we will assume that
				$$ e^{-\lambda M}\leq \frac{- (d_1 \lambda^2 - (c - \ep ) \lambda + r_1 (1-a_2))}{8 r_1 a_2 b_1} .$$
	By a strong maximum principle, one may check that $U' < 0$ and $V'>0$, hence we may also let
			$$\eta := \inf \left\{ \inf_{|z| \leq M+1} V'(z) , \inf_{|z| \leq M+1} [-U'(z)] \right\} >0 .$$
	Finally, we let $\delta_1, \delta_2 \in (0,\delta_0)$ be such that
\be\label{delta12}
\bss
4 b_1 e^{-\lambda M} < \frac{\delta_2}{\delta_1} < \frac{ - (d_1 \lambda^2 - (c-\ep)\lambda + r_1 (1-a_2))}{2 r_1 a_2},\\
\delta_1,\delta_2 \leq \min\left\{
\frac{\eta \ep}{ d_1 \| \chi ''\|_\infty + 2 \lambda d_1 \| \chi '\|_\infty + d_1 \lambda^2  + 2r_1},
     \frac{\eta \ep}{ d_2 \| \chi ''\|_\infty  +  r_2 (2b_1+1)}\right\}.
\ess
\ee
	
We now check the two differential inequalities
			$$d_1 \underline{U} '' + (c- \ep) \underline{U} ' + r_1 \underline{U} (1 -\underline{U} - a_2 \overline{V} ) \geq 0,$$
			$$d_2 \overline{V} '' + (c- \ep) \overline{V} ' + r_2 \overline{V} (1 -\overline{V} - b_1 \underline{U} ) \leq 0.$$
	First consider $z \leq - M -1$. Then $\chi ( z + M + 1) =1$, $\chi(M+1-z)=0$ and
			\begin{eqnarray*}
				&& d_1 \underline{U} '' + (c-\ep) \underline{U} ' + r_1 \underline{U} (1 -\underline{U} - a_2 \overline{V} )\\
			& = & d_1 U'' + c U' + r_1 U (1- U - a_2 V) - \ep U' \\
			&& + r_1 \underline{U} (1 -\underline{U} - a_2 \overline{V} ) - r_1 U (1-U - a_2 V)\\
				& \geq &  r_1 \underline{U} (1 -\underline{U} - a_2 \overline{V} ) - r_1 U (1-U - a_2 V),
			\end{eqnarray*} 
			where we used the fact that $U' < 0$. Going further,
				\begin{eqnarray*}
				&& d_1 \underline{U} '' + (c-\ep) \underline{U} ' + r_1 \underline{U} (1 -\underline{U} - a_2 \overline{V} )\\
				& \geq &  r_1 ( U - \delta_1 )  (1 - U + \delta_1 - a_2 V - a_2 \delta_1)- r_1 U (1-U - a_2 V) \\
				& = & r_1 U (\delta_1 - a_2 \delta_1) - r_1 \delta_1 (1-U + \delta_1 - a_2 V - a_2 \delta_1) \\	
				& \geq & r_1 \delta_1\left[ (2 - a_2) U  - (1 + \delta_1) \right] \\
				& \geq & 0,
			\end{eqnarray*} 
			where the last inequality comes from our choice of $0 < \delta_1 \leq \delta_0 $ and~\eqref{M-}. 
Similarly, we have that, for $z \leq -M -1$, 
				\begin{eqnarray*}
				&& d_2 \overline{V} '' + (c-\ep) \overline{V} ' + r_2 \overline{V} (1 -\overline{V} - b_1 \underline{U} )\\
				& = & d_2 V'' + c V' + r_2 V (1- V - b_1 U) - \ep V' \\
				&& + r_2 \overline{V} (1 -\overline{V} - b_1 \underline{U} ) - r_2 V (1-V - b_1 U)\\
				& \leq &  r_2 ( V + \delta_1 )  (1 - V - \delta_1 - b_1 U + b_1 \delta_1)- r_2 V (1-V - b_1 U) \\
				& = & r_2 V (-\delta_1 + b_1 \delta_1)  + r_2 \delta_1  (1-V -\delta_1 - b_1 U + b_1 \delta_1) \\
				& \leq & r_2 \delta_1 \left[ (1 - b_1 + b_1 \delta_1 ) + b_1 (1-U)    + V (b_1 - 1) \right] \\
				& \leq & 0,
			\end{eqnarray*}
using \eqref{M-}, $0<\delta_1\le\delta_0$, $V\ge 0$ and $b_1>1$.

			Next, we consider $z \geq M+1$. Then $\chi (M+1 - z) = 1$, $\chi(z+M+1)=0$ and
			\begin{eqnarray*}
				&& d_1 \underline{U} '' + (c-\ep) \underline{U} ' + r_1 \underline{U} (1 -\underline{U} - a_2 \overline{V} )\\
				& = & d_1 U'' + c U' + r_1 U (1- U - a_2 V) - \ep U' \\
				&& - \delta_1 e^{-\lambda z} (d_1 \lambda^2 - (c-\ep) \lambda)\\
				&& + r_1 \underline{U} (1 - \underline{U}- a_2 \overline{V}) - r_1 U (1- U - a_2 V)\\
				& \geq  & - \delta_1 e^{-\lambda z} (d_1 \lambda^2 - (c-\ep) \lambda + r_1 (1- a_2)  )\\
				&& - r_1 \delta_1 e^{-\lambda z} ( a_2  - a_2 \overline{V} - \underline{U} ) +  r_1 U (  \delta_1 e^{-\lambda z} - a_2 \delta_2 )\\
				& \geq & - \delta_1 e^{-\lambda z} (d_1 \lambda^2 - (c-\ep) \lambda + r_1 (1- a_2)  )\\
				&& - r_1  a_2 \delta_1 e^{-\lambda z} (1 - V)  -  r_1 a_2 \delta_2 e^{-\lambda z} \\
				& \geq  & 0 ,
			\end{eqnarray*}
where we used~\eqref{M+} and~\eqref{delta12}. 
            As far as the other component is concerned, we compute for $z \geq M+1$ that
				\begin{eqnarray*}
				&& d_2 \overline{V} '' + (c-\ep) \overline{V} ' + r_2 \overline{V} (1 -\overline{V} - b_1 \underline{U} )\\
				& = & d_2 V'' + c V' + r_2 V (1- V - b_1 U) - \ep V' \\
				&& + r_2 \overline{V} (1 -\overline{V} - b_1 \underline{U} ) - r_2 V (1-V - b_1 U)\\
				& \leq &  r_2 ( V + \delta_2 )  (1 - V - \delta_2 - b_1 U + b_1 \delta_1 e^{-\lambda z})- r_2 V (1-V -b_1 U) \\
				& = & r_2 V (-\delta_2 + b_1 \delta_1 e^{-\lambda z})  + r_2 \delta_2 (1-V -\delta_2 - b_1 U + b_1 \delta_1 e^{-\lambda z}) \\
				& \leq & r_2 \delta_2 ( 1 - 2 V   - \delta_2  )  + r_2 V b_1 \delta_1 e^{-\lambda z}  + r_2 b_1 \delta_1 \delta_2 e^{-\lambda z}\\
				& \leq & r_2 \delta_2 ( 1 - 2 V    )  + 2 r_2 b_1 \delta_1 e^{-\lambda M}\\
				& \leq & - \frac{r_2 \delta_2}{2} + 2 r_2 b_1 \delta_1 e^{-\lambda M} \\
				& \leq & 0,
			\end{eqnarray*}
due to \eqref{M+} and the first inequality in \eqref{delta12}.
			
Finally, since $\underline{U}=U$ and $\overline{V}=V$ in $[-M,M]$, it remains to consider the interval 
$$(-M-1,-M)\cup(M,M+1).$$
For $-M-1< z < -M$, we have 
\beaa
\underline{U}(z)=U(z)-\delta_1\chi(M+1+z),\; \overline{V}(z)=V(z)+\delta_1\chi(M+1+z).
\eeaa
Hence
\begin{eqnarray*}
				&& d_1 \underline{U}'' + (c-\ep) \underline{U} ' + r_1 \underline{U} (1 - \underline{U} - a_2 \overline{V})\\
				& \geq & - \ep U' -   \delta_1 d_1  | \chi ''|   +  r_1 \underline{U} (1 - \underline{U} - a_2 \overline{V}) - r_1 U (1- U - a_2 V) \\
&=&- \ep U' -   \delta_1 d_1  | \chi ''|+r_1U\delta_1(1-a_2)\chi-r_1\delta_1\chi(1-U-a_2V+\delta_1\chi-a_2\delta_1\chi)\\
				& \geq &  \ep \eta  -  \delta_1 d_1  | \chi ''|     - r_1 \delta_1 (1 + \delta_1)\\
				& \geq & 0,
\end{eqnarray*}
using $\chi'\le 0$, $a_2<1$, $\chi\le 1$ and \eqref{delta12}.
Similarly, for $M < z < M+1$, we have 
\beaa
\underline{U}(z)=U(z)-\delta_1\chi(M+1-z)e^{-\lambda z},\; \overline{V}(z)=V(z)+\delta_2\chi(M+1-z),
\eeaa
and so
			\begin{eqnarray*}
				&& d_1 \underline{U}'' + (c-\ep) \underline{U} ' + r_1 \underline{U} (1 - \underline{U} - a_2 \overline{V})\\
				& \geq &  - \ep {U}' -   \delta_1 d_1 ( | \chi ''| + 2 \lambda |\chi '| + \lambda^2 ) 
+  r_1 \underline{U} (1 - \underline{U} - a_2 \overline{V}) - r_1 U (1- U - a_2 V) \\
				& \geq &  \ep \eta  -  \delta_1 d_1 ( | \chi ''| + 2 \lambda |\chi '| + \lambda^2 )     - r_1 \delta_1 (1 +  \delta_1)\\
				& \geq & 0.
			\end{eqnarray*}
One can also check that 
$$d_2 \overline{V} '' + (c-\ep) \overline{V} ' + r_2 \overline{V} (1 -\overline{V} - b_1 \underline{U} ) \leq 0$$
holds for $z\in(-M-1,-M)\cup(M,M+1)$.
This concludes the proof.
\end{proof}
Finally, the upper estimate on the spreading of the $u$-component of the homogeneous system~\eqref{Ph} follows from the next proposition. Since we will be able to use this upper-solution as is in the context of the homogeneous system~\eqref{Ph}, we will not go into the details of its construction.
	\begin{proposition}[\cite{gl19}, Proposition~1.5]\label{prop:gl_15}
		Let $s_1 \in (s_{1,GL}^*, 2 \sqrt{d_1 r_1})$. There exist $\delta^* >0$ and~$(s_1^\delta,s_2^\delta)_{\delta \in (0,\delta^*)}$ such that
				$$s_2^\delta \nearrow s_2^*, \qquad s_1^\delta \searrow s_1,$$ 
			as $\delta \to 0$, and system~\eqref{Ph} admits an upper-lower solution  $(\overline{u},\underline{v})$ satisfying the following properties.
		\begin{itemize}
			\item For all $t \geq 0$, the function $\overline{u} (\cdot,t)$ is equal to $1$ on the left of the moving point $s_1^\delta t$. Also, for all $x \in \mathbb{R}$,
					$$\overline{u} (x ,0) \geq \min \{ 1 ,  e^{-\mu x}\},$$
					where $\mu$ is the smaller root of
					$$d_1 \mu^2 - s_2^* \mu + r_1 + \lambda (s_2^* - s_1) = 0,$$
					in which $\lambda$ is the smaller root of
					$$d_1 \lambda^2 - s_1  \lambda + r_1 (1 -a_2)=0 .$$
			\item For all $t \geq 0$, the function $\underline{v} (\cdot,t)$ is compactly supported, its supremum is less than $1 -\delta$, and its support is including in the moving half-line $[s_1^\delta t, +\infty)$.
			\item The pair $(\overline{u},\underline{v})$ spreads in the following sense,
			$$\lim_{t \to +\infty} \left\{ \sup_{(s_1^\delta + \ep )t \leq x \leq (s_2^\delta - \ep) t} \Big[ | \overline{u} (x,t) | + |\underline{v} (x,t) - (1-2\delta)| \Big] \right\} = 0 ,$$
			for any $\ep \in \left(0, \frac{s_2^\delta - s_1^\delta}{2} \right)$.
		\end{itemize}
	\end{proposition}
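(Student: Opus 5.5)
The plan is to build $(\overline{u},\underline{v})$ by gluing three pieces in three moving zones, in the spirit of the ansatz computation in the proof of Theorem~\ref{thm:nlp} and of the lower-upper solution of Proposition~\ref{prop:gl_subsol}, with inequalities reversed.

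\textbf{Left and middle zones.} In the left zone, near the moving point $s_1^\delta t$, we use a traveling front $(U,V)(x-s_1^\delta t)$ of~\eqref{Ph} connecting $(1,0)$ to $(0,1)$, with speed $s_1^\delta$. It exists because $s_1^\delta > s_1 > s_{1,GL}^* \geq s_{1,mtf}^*$, by Theorem~\ref{thm:gl_add}. We set $\overline{u} = \min\{1, U+\text{correction}\}$, so that $\overline{u}\equiv 1$ for $x\le s_1^\delta t$ after a shift, and $\underline{v} = \max\{0, V - \text{correction}\}$, with support starting at $s_1^\delta t$. Since the speed is non-minimal, $U$ decays like $e^{-\lambda z}$ with $\lambda$ the smaller root of $d_1\lambda^2 - s_1\lambda + r_1(1-a_2)=0$ (up to $\delta$-perturbations). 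In the middle zone $s_1^\delta t + M \le x \le s_2^\delta t$, where $\underline{v}\approx 1-2\delta$, we take the exponential $\overline{u} = e^{-\lambda_\delta (x-s_1^\delta t)}$. Here $\lambda_\delta$ is the smaller root of $d_1\lambda^2 - s_1^\delta\lambda + r_1(1-a_2(1-2\delta))=0$, which is a supersolution once $\underline v\geq 1-2\delta$ there.

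\textbf{Right zone.} For $x\ge s_2^\delta t$ we take $\overline{u}=e^{-\lambda_\delta(s_2^\delta-s_1^\delta)t}e^{-\mu_\delta(x-s_2^\delta t)}$, where $\mu_\delta$ is the smaller root of $d_1\mu^2 - s_2^\delta\mu + r_1 + \lambda_\delta(s_2^\delta-s_1^\delta)=0$. This solves the linearization with $\underline v=0$. The root $\mu_\delta$ exists because $s_1>s_{1,nlp}^*$ gives $g(s_1)>0$ strictly, in the notation of the proof of Theorem~\ref{thm:nlp}. That strict inequality survives small perturbations, which is how $(s_1^\delta,s_2^\delta)\to(s_1,s_2^*)$ is chosen. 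The corner at $x=s_2^\delta t$ is admissible for a generalized supersolution iff $\mu_\delta\ge\lambda_\delta$, which is checked as in Theorem~\ref{thm:nlp}. Letting $\delta\to0$ gives $\overline u(x,0)\ge\min\{1,e^{-\mu x}\}$ after a shift.

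\textbf{The component $\underline v$.} It is cut off on the right by a compactly supported Fisher-KPP subsolution. Concretely, we use a plateau at height $1-2\delta$ whose right edge is a sine-type bump moving at speed $s_2^\delta<s_2^*$, as in \cite{aw75}. It is a subsolution of the $v$-equation because $b_1\overline u$ is exponentially small beyond $s_1^\delta t+M$, say $\le \delta/2$ after taking $M$ large. Hence the rate is at least $r_2(1-\delta/2-\underline v)$, which leaves room for speeds below $s_2^*$ and for the level $1-2\delta$. This gives the support in $[s_1^\delta t,+\infty)$, compactness, the bound $\sup\underline v<1-\delta$, and the spreading limit $\underline v\to 1-2\delta$, $\overline u\to0$ on $[(s_1^\delta+\ep)t,(s_2^\delta-\ep)t]$. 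The last limit holds because $\lambda_\delta$-decay over a linearly growing distance kills $\overline u$.

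\textbf{Main obstacle.} The hardest step is the junction between the traveling-front piece and the exponential piece, together with the verification of the $v$-inequality for $\underline v$ near $s_1^\delta t$, where $\underline v$ goes from $0$ to $1-2\delta$ while $\overline u$ drops from $1$. I would handle this exactly as in the proof of Proposition~\ref{prop:gl_subsol}. That means cutoff functions $\chi$, small corrections $\delta_1 e^{-\lambda z}$ and $\delta_2$, and the slack $-\ep U'\ge\ep\eta$ on the compact transition interval, now exploited with the opposite sign because we run slightly faster than the front rather than slower. I expect the delicate point to be the choice of the ratio $\delta_2/\delta_1$ compatible with both components. Since this construction is exactly \cite[Proposition~1.5]{gl19}, one may ultimately simply invoke it.
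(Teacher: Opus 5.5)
Your proposal follows essentially the paper's route. The paper gives no construction of $(\overline{u},\underline{v})$; it simply invokes \cite[Proposition~1.5]{gl19}, remarking only that the properties stated there at $t=0$ persist for all $t\geq 0$ because everything is expressed in moving variables. The gluing you outline, with a traveling front of speed above $s_{1,mtf}^*$ joined to exponential solutions of the linearized problem and $\mu$ existing because $g(s_1)>0$, is exactly the strategy the paper attributes to \cite{gl19}, and you likewise end up citing it.

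One small correction, should you write it out. For fixed $\delta$, the bound $\overline{u}(x,0)\geq\min\{1,e^{-\mu x}\}$ does not follow by letting $\delta\to 0$, since it fails for large $x$ whenever $\mu_\delta>\mu$. Both $s_2^\delta<s_2^*$ and the $2\delta$-shift in $\lambda_\delta$ push $\mu_\delta$ upward, so $s_1^\delta$ has to be taken far enough above $s_1$ to guarantee $\mu_\delta\leq\mu$.
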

	One may recognize that $\lambda$ and $\mu$ solve the same quadratic equations as in our proof of Theorem~\ref{thm:nlp}. This is no coincidence as nonlocal pulling may also happen here.  
	
	In fact, in~\cite[Proposition 1.5]{gl19}, some parts of the first two items of this proposition are only stated at time $t=0$. This slight change is necessary because, later on, we will compare solutions of~\eqref{P} with this upper-lower solution $(\overline{u}, \underline{v})$, and due to climate change, we will do this on a moving half-line. Anyway, one may check that the pair $(\overline{u}, \underline{v})$ does satisfy those properties for all nonnegative times, due to the fact that they only depend on some moving variables. If one would like to avoid the technical difficulties, the Figure~4.3 in~\cite{gl19} should be convincing enough (notice that, in their notation, $s_2^\delta$ and $s_1^\delta$ respectively become~$c_1^\delta$ and~$c_2^\delta$).\medskip

\subsection{The homogeneous two-species competition system: a Liouville type result}

An important ingredient of our arguments will be a Liouville type result, which is that~\eqref{Ph} admits a unique bounded entire in time solution whose first component has positive infimum. We state it in the strong-weak competition case where \eqref{eq:hypothesis} holds. Still, similar results hold in other cases, in particular with the weak-weak competition, up to replacing $(1,0)$ by the unique stable coexistence steady state, provided it exists.

\begin{proposition}\label{prop:entire}
 	Assume that $a_2<1<b_1$. Suppose that $(u,v)$ is an entire in time solution of \eqref{Ph} such that $\delta\le u\le 1$ and $0\le v\le 1$ for some constant $\delta\in(0,1)$.
	Then $(u,v)\equiv (1,0)$.
\end{proposition}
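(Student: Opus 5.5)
The plan is to squeeze the entire solution between spatially homogeneous ODE solutions started at arbitrarily remote initial times, and then iterate the resulting bounds. Because $(u,v)$ is defined for all $t\in\bR$, any bound obtained by comparison with an ODE solution launched at time $-T$ can be evaluated at a fixed time $t$ and passed to the limit $T\to+\infty$. This turns asymptotic ODE limits into pointwise bounds valid for all $(x,t)$.

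\emph{Elementary step.} Suppose we know $u\ge \underline{m}$ everywhere, with $\underline{m}\in(0,1]$. Then
$$v_t\le d_2v_{xx}+r_2v(1-b_1\underline{m}-v).$$
Let $V_T$ be the solution of $V'=r_2V(1-b_1\underline{m}-V)$ with $V_T(-T)=1\ge v$. By the comparison principle for the scalar equation on $[-T,t]$, we get $v(x,t)\le V_T(t)$. Letting $T\to\infty$ gives $v\le \overline{M}:=\max\{0,1-b_1\underline{m}\}$ everywhere. Indeed, when $1-b_1\underline{m}\le 0$, $V_T$ still decays to $0$, at a rate like $1/(t+T)$ in the borderline case.

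Symmetrically, suppose $v\le \overline{M}$ with $a_2\overline{M}<1$. Then
$$u_t\ge d_1u_{xx}+r_1u(1-a_2\overline{M}-u).$$
Comparing with the logistic ODE solution started at level $\delta>0$ at time $-T$, and letting $T\to\infty$, gives $u\ge 1-a_2\overline{M}$. Here we use that the lower bound $u\ge\delta$ is what makes the ODE start strictly positive. Note that $a_2\overline{M}\le a_2<1$ always holds, since $\overline{M}\le 1$.

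\emph{Iteration.} Set $m_0=\delta$, and define $M_{n}=\max\{0,1-b_1m_n\}$ and $m_{n+1}=1-a_2M_{n}$. By induction, $u\ge m_n$ and $v\le M_n$ for all $n$. Consider the map $f(m)=1-a_2\max\{0,1-b_1m\}$. As long as $m<1/b_1$, the increment $f(m)-m=(1-a_2)-m(1-a_2b_1)$ is affine in $m$. On $[0,1/b_1]$ it is bounded below by its endpoint values, namely $1-a_2>0$ at $m=0$ and $1-1/b_1>0$ at $m=1/b_1$. Both are positive thanks to $a_2<1<b_1$. Hence $m_n$ increases by at least $\min\{1-a_2,1-1/b_1\}$ at each step, so after finitely many steps $m_n\ge 1/b_1$. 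At that point $M_n=0$, so $v\equiv 0$.

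\emph{Conclusion.} With $v\equiv0$, $u$ is an entire solution of $u_t=d_1u_{xx}+r_1u(1-u)$ satisfying $\delta\le u\le 1$. The same ODE comparison from level $\delta$ at time $-T$ yields $u\ge1$, hence $u\equiv1$ (alternatively, one may invoke the Liouville result of~\cite{bhn} already used in the proof of Theorem~\ref{thm:lbsf}).

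The only delicate point, which I expect to be the main obstacle, is justifying that ODE limits yield exact bounds rather than bounds up to $\ep$. This rests on the entire-in-time property: for each fixed $t$, the ODE value at $t$ from data at $-T$ converges as $T\to\infty$ to the attracting equilibrium. This includes the borderline case $1=b_1\underline{m}$, where convergence to $0$ is only algebraic but still holds. Should one prefer to avoid that borderline case, it suffices to run the iteration with $m_n$ replaced by $m_n-\ep$. The uniform increment bound is unaffected for small $\ep$, and one reaches $m_n-\ep>1/b_1$ strictly.
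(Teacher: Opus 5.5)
Your proof is correct, and it takes a genuinely different route from the paper's.

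\textbf{The paper's route.} The paper builds the Lyapunov function $F(u,v)=(u-1-\ln u)+\sigma v$ with $\sigma=(2-a_2)r_1/r_2$. It shows that $\nabla F\cdot(f,g)\le-\kappa F$ on $[\delta,1]\times[0,1]$. Here $u\ge\delta$ enters through the term $-\sigma r_2(b_1-1)\delta v$ and through the bound $(1-u)^2\ge\beta(u-1-\ln u)$. The passage from this ODE inequality to the entire PDE solution is then deferred to an argument in the literature and omitted. That step is not a one-line comparison: when $d_1\neq d_2$, $F(u,v)$ does not satisfy a closed parabolic inequality.

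\textbf{Your route.} You decouple the system by freezing one component at its current bound. You then compare with a spatially homogeneous logistic ODE started at time $-T$. Because the solution is entire and the ODE is autonomous, $V_T(t)=V_0(t+T)$, so the ODE limits become exact global bounds. The strong--weak assumption then makes $m\mapsto 1-a_2\max\{0,1-b_1 m\}$ gain at least $\min\{1-a_2,\,1-1/b_1\}$ per step. Hence the bound on $v$ reaches exactly $0$ after finitely many steps, including the borderline case where decay is only algebraic.

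\textbf{What each approach buys.}
\begin{itemize}
\item Your argument is fully self-contained and uses only the scalar comparison principle for bounded solutions. It avoids the PDE step the paper leaves to a reference.
\item The paper's Lyapunov approach does not rely on the monotone (competitive) structure. That is why it is the natural tool in the cited work for non-monotone systems.
\item It also adapts directly to the weak--weak case mentioned after the statement. Your iteration would adapt there too, but it would need a lower bound on $v$ as well, and it would only converge in the limit rather than terminate.
\end{itemize}
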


\begin{proof}
	The proof consists in finding a Lyapunov function. Consider
	\be\label{F}
	F(u,v):=(u-1-\ln u)+\sigma v,
	\ee
	where $\sigma$ is a positive constant to be determined.
	The directional derivative of $F$ along the vector field
	\beaa
	(f,g):=(r_1u(1-u-a_2v),r_2v(1-b_1u-v))
	\eeaa
	is computed as
	\beaa
	\mathcal{X}F(u,v):=\nabla F\cdot(f,g)=-r_1(1-u)^2+(a_2r_1+\sigma r_2)v(1-u)-\sigma r_2v^2-\sigma r_2(b_1-1)uv.
	\eeaa
	Then we have
	\beaa
	\mathcal{X}F(u,v)&=&-\left\{r_1-\frac{\sigma r_2}{4}\left(1+\frac{a_2r_1}{\sigma r_2}\right)^2\right\}(1-u)^2-\sigma r_2(b_1-1)uv\\
	&&-\sigma r_2\left\{v-\frac{1}{2}\left(1+\frac{a_2r_1}{\sigma r_2}\right)(1-u)\right\}^2\\
	&\le&-\left\{r_1-\frac{\sigma r_2}{4}\left(1+\frac{a_2r_1}{\sigma r_2}\right)^2\right\}(1-u)^2-\sigma r_2(b_1-1)\delta v,
	\eeaa
	using $u\ge\delta$.
	
	Next, we write
	\beaa
	r_1-\frac{\sigma r_2}{4}\left(1+\frac{a_2r_1}{\sigma r_2}\right)^2=\frac{1}{4\sigma r_2}h(\sigma ),\; \mbox{ where } h(\sigma ):=[4\sigma r_1r_2-(a_2r_1+\sigma r_2)^2].
	\eeaa
	We compute
	\beaa
	h(\sigma )=-r_2^2\sigma ^2+(4-2a_2)r_1r_2\sigma -a_2^2r_1^2,\quad h'(\sigma )=-2r_2^2\sigma +(4-2a_2)r_1r_2.
 	\eeaa
	Using $a_2<1$, the function~$h$ has a unique maximal point at $\sigma _0:=(2-a_2)r_1/r_2>0$ with
	\beaa
	h(\sigma _0)=r_1^2[(2-a_2)^2-a_2^2]>0.
	\eeaa
	By choosing the constant $\sigma =\sigma _0$ in \eqref{F}, we find that
	$$\mathcal{X} F(u,v) \leq - \frac{h(\sigma_0)}{4 \sigma_0 r_2} (1-u)^2 - \sigma_0 r_2 (b_1 - 1 ) \delta v.$$
	On the other hand, for the given $\delta>0$ there is a positive constant $\beta$ such that
	\beaa
	(1-u)^2\ge \beta(u-1-\ln u),\;\forall\, u\in[\delta,1].
	\eeaa
	We conclude that
\begin{equation*}
	\mathcal{X}F(u,v)\le -\kappa F(u,v),\; \mbox{ with } \kappa:=\min\{ r_2(b_1-1)\delta,\beta h(\sigma _0)/(4\sigma _0r_2)\}>0 .
\end{equation*}
	This shows that $F$ is a Lyapunov function of the ODE system given by~\eqref{Ph} without diffusion. Moreover, $F(u,v)=0$ if and only if $(u,v)=(1,0)$.
	
	The proposition now follows by a similar proof to that of \cite[Theorem 1.1]{gs21}. We omit the details here.
\end{proof}

\section{The two species competition system facing climate change : stronger and faster competitor} 
\setcounter{equation}{0}

In this section and the next, we finally deal with the two species competition problem with climate change. Here we start with the proof of Theorem~\ref{thm:1_strongfast}, that is the situation when 
$$a_2 < 1 < b_1, \qquad s_1^* > s_2^*,$$
which means that $u$ is the strong competitor, $v$ is weak, and also $u$ is the faster of the two species.

We recall that, in Section~\ref{sec:prelim_single1}, we have already proved statements~$(i)$ and~$(ii)$, as well as the second limit of~$(iii)$. It also followed from Corollary~\ref{thm:scalar_shifting1} that
	$$\lim_{t \to +\infty} \left\{\sup_{x\le (s - \ep)t} |v(x,t)| + \sup_{x\ge (s_2^* +\ep)t} |v (x,t)| \right\} = 0,$$ 
for any $\varepsilon \in (0,s)$. Therefore, the remainder of Theorem~\ref{thm:1_strongfast} follows from the next proposition.
\begin{proposition}\label{prop:u_strongfast1}
	Assume that $a_2 < 1 < b_1$ and $0< s < s_2^* < s_1^*$. Let $(u,v)$ be the solution of~\eqref{P} with initial data $(u_0,v_0)$ satisfying \eqref{i-bd}, and such that the support of $v_0$ is bounded from above. 
Then
$$\lim_{t \to +\infty}\left\{ \sup_{(s + \varepsilon)t \leq x \leq (s_1^* -\varepsilon)t} \Big[ |u(x,t)-1| + |v(x,t)| \Big] \right\} =0,$$
for any $\varepsilon \in \left(0, \frac{s_1^*-s}{2} \right)$.
\end{proposition}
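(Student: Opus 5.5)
We first show that $u$ stays uniformly positive in the expanding cone $\{(s+\varepsilon)t \leq x \leq (s_1^*-\varepsilon)t\}$, and then upgrade this to convergence to $(1,0)$ through the Liouville type result, Proposition~\ref{prop:entire}.

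\emph{Step 1: positive lower bound on $u$.} Fix a small $\delta>0$. Since $v\le 1$ and $\alpha_1(+\infty)=1$, there is $K>0$ such that
$$u_t \geq d_1 u_{xx} + r_1 u [1-\delta - u - a_2 v] \quad \mbox{on } \{x \geq st+K\}.$$
The difficulty is that $v$ is not small everywhere. Corollary~\ref{thm:scalar_shifting1}$(i)$ only gives $v\to 0$ uniformly on $\{x\ge (s_2^*+\varepsilon')t\}$, while $s_2^*<s_1^*$. So the plan is to run the argument of the proof of Theorem~\ref{thm:nlp}, case~$(i)$ (the case $s_2^*\le s_1^*$, where the answer is $s_1^*$), in the moving region $\{x\ge st+K\}$. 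Comparison with the scalar equation
$$w_t=d_1w_{xx}+r_1w[1-\delta-w-a_2H((s_2^*+\delta)t-x)-a_2\delta]$$
should give $\liminf u>0$ uniformly on $\{(s_2^*+2\delta)t\le x\le (s_1^*-\varepsilon)t\}$.

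Concretely, the subsolution is a compactly supported Aronson--Weinberger bump of the Fisher-KPP equation with rate $r_1(1-\delta-a_2\delta)$ and speed $c\in(s_2^*+\delta,\,2\sqrt{d_1r_1(1-\delta-a_2\delta)})$. It is placed at a large time $T$ at a position $x_0\ge (s_2^*+\delta)T$, where $v\le\delta$ for all later times in its support, and where $u(\cdot,T)>0$ by the strong maximum principle. It therefore travels ahead of $v$ and stays a subsolution.

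\emph{Step 2: $u$ takes over the region behind.} We then propagate the lower bound backward to the region $(s+\varepsilon)t\le x\le (s_2^*+2\delta)t$. The strategy is to use the lower-upper pair $(\underline u,\overline v)$ built from planar bumps. Since $u$ is now positive and $v$ is a priori $\le 1$, we argue as in the homogeneous strong-weak case. Once $u\ge\eta$ on a large interval, we use the ODE Lyapunov structure: the spatially homogeneous solution of the kinetic system started from $(\eta,1)$ converges to $(1,0)$. We localize it using compactly supported subsolutions for $u$ of the form $\eta\,\phi_R$, with $\phi_R$ the principal Dirichlet eigenfunction on a large interval, whose Dirichlet eigenvalue is close to $r_1(1-a_2)>0$. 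This shows that a positive amount of $u$ survives against $v\le 1$, because $a_2<1$: indeed $u_t\ge d_1u_{xx}+r_1u(1-\delta-a_2-u)$ on $\{x\ge st+K\}$. A Fisher-KPP subsolution with any speed below $2\sqrt{d_1r_1(1-a_2-\delta)}$, moving \emph{left or right}, can then be used. Pick bumps with speed $c'\in(s,s_1^*)$ starting from the region already controlled, as well as a stationary-in-the-moving-frame bump travelling at speed exactly $s+\varepsilon$. For $\delta$ small one has $2\sqrt{d_1r_1(1-a_2-\delta)}>0$, so bumps with speed in $[s+\varepsilon,\ (s_1^*-\varepsilon)]$ relative to the lab frame can be obtained by Galilean shifting. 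The subtlety is that the KPP speed $2\sqrt{d_1r_1(1-a_2)}$ bounds only the \emph{relative} drift of a subsolution, and the bumps must remain in $\{x\ge st+K\}$. We therefore use a family of bumps indexed by speed $c\in[s+\varepsilon,s_1^*-\varepsilon]$, all launched at time $T$ from the region of Step~1 or from the point $x_0$. Each bump with speed $c$ has $c\ge s+\varepsilon$, so it stays in $\{x\ge st+K\}$. Since they all start at $x_0$ and spread linearly, this yields $\liminf_{t}\inf_{(s+\varepsilon)t\le x\le (s_1^*-\varepsilon)t}u>0$.

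\emph{Step 3: Liouville.} Take any sequences $t_n\to\infty$ and $x_n$ in the cone. Parabolic estimates give, along a subsequence, a limit $(u_\infty,v_\infty)$, an entire solution of~\eqref{Ph}, because $\alpha_i(x_n-st_n+\cdot)\to 1$ as $x_n-st_n\ge \varepsilon t_n\to\infty$. By Steps~1--2 we have $u_\infty\ge\eta>0$ (with $\varepsilon$ replaced by $\varepsilon/2$), so Proposition~\ref{prop:entire} gives $(u_\infty,v_\infty)\equiv(1,0)$, and uniform convergence follows by contradiction.

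The main obstacle is Step~2. The naive relative speed $2\sqrt{d_1r_1(1-a_2)}$ may be smaller than $s_1^*-s$, and the delicate point is ensuring the bumps start where $u$ is already bounded below. I expect the single launch point plus the family of bumps with speeds in $[s+\varepsilon,s_1^*-\varepsilon]$ to suffice. If it does not, the fallback is to first obtain $v\to0$ on $\{x\ge(s_2^*+\delta)t\}$ together with $u\to1$ there via Liouville, and then push the bound backward step by step.
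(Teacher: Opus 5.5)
Your Steps 1 and 3 are sound and match the paper: bumps in frames with speed $c\in(s_2^*,s_1^*)$, where $v\to0$, followed by the Liouville result, Proposition~\ref{prop:entire}. The gap is in Step~2.

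Behind $(s_2^*+\delta)t$ you only know $v\le 1$. Any bump travelling there must therefore be a subsolution of $w_t=d_1w_{xx}+r_1w(1-a_2-\delta-w)$. A compactly supported profile $\phi(x-ct)$ can be such a subsolution only if $c^2<4d_1r_1(1-a_2-\delta)$. In the moving frame the drift $c\phi'$ adds $c^2/(4d_1)$ to the principal Dirichlet eigenvalue, and there is no Galilean invariance that removes it. Since $2\sqrt{d_1r_1(1-a_2)}$ can be far below $s+\varepsilon$ (take $a_2$ close to $1$), your bumps with lab-frame speeds $c\in[s+\varepsilon,s_2^*+\delta]$ launched from $x_0$ are in general not subsolutions once they fall behind the front of $v$. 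So the lower bound on the slow part of the cone does not follow. Your fallback does not say how the bound is pushed backward; done with moving bumps, it meets the same obstruction.

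The missing idea is that $u$ never needs to travel into that region. Every point of the cone is swept by the fast frame before the climate change reaches it, so it suffices to hold $u$ in place with a speed-zero bump. Concretely, let $x_n\in[(s+\varepsilon)t_n,(s_1^*-\varepsilon)t_n]$. Pick $c\in(\max\{s_2^*,s_1^*-\varepsilon\},s_1^*)$ and set $t_n'=x_n/c<t_n$. At time $t_n'$, Step~1 gives $u\ge\eta>0$ on a large interval around $x_n$. The stationary bump $\eta_2\cos\bigl(\pi(x-x_n)/(2L_2)\bigr)$ on $(x_n-L_2,x_n+L_2)$ is a subsolution of the $u$-equation for $t\in[t_n',t_n]$, and verifying this uses only three facts:
\begin{itemize}
\item $v\le 1$;
\item $a_2<1$ (this is where $2\sqrt{d_1r_1(1-a_2-\delta)}>0$ is genuinely used, with speed $0$);
\item $\alpha_1(x-st)\ge 1-\delta$, which holds because $x-st\ge\varepsilon t_n-L_2$ there.
\end{itemize}
Hence $u(x_n,t_n)\ge\eta_2$, which is exactly the paper's Step~2. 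Your Step~3 then concludes as written.
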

\begin{proof}
	The proof proceeds in three steps. In the first one, we will show that the species $u$ persists in any moving frame with speed $c \in (s_2^*,s_1^*)$, due to $v$ going to extinction in such a moving frame. In the second step, using a nonmoving subsolution, we find that $u$ still persists in slower moving frames. Lastly, the convergence to the steady state $(1,0)$ then follows from the Liouville Proposition~\ref{prop:entire}.\medskip
	
	\paragraph{\textit{Step 1.}} Let us place ourselves in any moving frame with speed
$$c \in (s_2^* , s_1^*) .$$
Since~$v$ satisfies
$$v_t \leq d_2 v_{xx} + r_2 v ,$$
we infer by a parabolic comparison principle and the explicit solution for this linear equation that
$$\limsup_{t \to +\infty} \sup_{x \geq s_2^* t } v(x,t) =0.$$
We point out to the proof of~Theorem~\ref{thm:lbsf} above, where a similar argument was made.

Recalling that $c < s_1^* = 2 \sqrt{d_1 r_1}$, we may fix $\delta >0$ such that
$$r_1 (1 - 2\delta)  > \frac{c^2}{4 d_1}.$$
Then, take any $L_1$ large enough such that
\begin{equation}\label{eq:L1}
	 \frac{d_1 \pi^2}{4 L_1^2} < r_1 ( 1 - 2 \delta) - \frac{c^2}{4 d_1 } , 
\end{equation}
and, thanks to $c > s_2^* > s$, any $T$ large enough such that
$$\inf_{t \geq T, x \in (ct- L_1,ct+L_1) } \alpha_1(x-st) \geq 1 - \frac{\delta}{2} ,$$ 
$$\sup_{ t\geq T, x \in (ct -L_1, ct+L_1)} v (x,t) \leq \frac{\delta}{2 a_2}.$$
Then, on the one hand, we have, for all $t \geq T$ and $x \in (ct-L_1,ct+L_1)$, that
$$u_t \geq  d_1 u_{xx} + r_1 u (1- \delta - u). $$
On the other hand, on the same subdomain, the function
$$\underline{u}_1 (x,t) :=  \left\{ 
\begin{array}{ll}
	\eta_1 e^{-\frac{c}{2 d_1} (x-ct)} \cos \left( \frac{ \pi (x-ct) }{2L_1} \right) & \text{ if } x \in (ct - L_1 , ct + L_1), \vspace{3pt}\\
	0 & \text{ otherwise},
	\end{array}
	\right.$$
satisfies
\begin{eqnarray*}
	& & (\underline{u}_1)_t - d_1 (\underline{u}_1)_{xx} - r_1 \underline{u}_1 (1 -\delta - \underline{u}_1)\\
	& = & \left( \frac{c^2}{4d_1} - r_1 (1 - \delta) + d_1 \frac{\pi^2}{4 L_1^2} \right) \times \underline{u}_1 + r_1 \underline{u}_1^2 \\
	& \leq &  r_1 \underline{u}_1  \left(\underline{u}_1  - \delta \right) \\
	& \leq & 0 ,
\end{eqnarray*}
where the last inequality holds provided that $\eta_1$ is small enough. 

Due to $u$ being positive and $\underline{u}_1$ having compact support, we can decrease $\eta_1$ again if necessary so that $\underline{u}_1 (\cdot,T) \leq u(\cdot,T)$ and apply the comparison principle to conclude that
\begin{equation}\label{u_strongfast_step1}
\liminf_{t \to +\infty} \inf_{x \in (ct -L_1/2 ,ct +L_1/2)} u(x,t) > 0  .
\end{equation}
For later use, we highlight that according to~\eqref{eq:L1},~$L_1$ may be arbitrarily large here.\medskip

\paragraph{\textit{Step 2.}} Here we let $0 < \ep < \frac{s_1^* -s}{2}$ and prove that \begin{equation}\label{u_strongfast_step2}
	\liminf_{t \to +\infty}\inf_{(s+\varepsilon) t \leq x \leq (s_1^* - \varepsilon) t}  u(x,t) >0.
\end{equation}
We proceed by contradiction and assume that there exist sequences $t_n \to +\infty$ and $x_n \in ((s+ \varepsilon) t_n, (s_1^*-\varepsilon) t_n )$ such that
$$u (x_n , t_n) \to 0,$$
as $n \to +\infty$. 

First, we pick $c \in (s_1^* - \varepsilon, s_1^*)$ and define
$$t'_n = \frac{x_n}{c}.$$
According to the first step and more precisely \eqref{u_strongfast_step1}, we have, for any $L_1 >0$, that
\begin{equation}\label{u_strongfast_step1_bis}
	\liminf_{n \to +\infty} \inf_{x \in (-L_1/2, L_1 /2)} u(x+ x_n , t'_n ) > 0 .
	\end{equation}
Next, let us define
$$\underline{u}_2(x) :=  \left\{ 
\begin{array}{ll}
	\eta_2  \cos \left( \frac{ \pi x }{2L_2 } \right) & \text{ if } x \in (- L_2 , L_2), \vspace{3pt}\\
	0 & \text{ otherwise}.
\end{array}
\right.$$
It follows from~\eqref{u_strongfast_step1_bis} that, provided $\eta_2$ is small enough and $L_1 > L_2$, we have
$$u (x_n + \cdot , t'_n) \geq \underline{u}_2 (\cdot),$$
for all $n$ large enough.

Moreover, we will show that
$$ \underline{u}_2 (x - x_n )$$
is a subsolution for the $u$-equation on an appropriate subdomain. By similar computations as in the previous step, thanks to $a_2 <1$, one may first check that 
$$(\underline{u}_2)_t - d_1 (\underline{u}_2)_{xx} -r_1 \underline{u}_2  (1 - a_2 -  \delta - \underline{u}_2) \leq 0, $$
for some $\delta >0$, provided that $L_2$ and $\eta_2$ are respectively large and small enough. Then, notice that $t'_n < t_n$, and for any $t \in (t'_n , t_n)$ and $x \in (x_n - L_2, x_n + L_2)$, we have 
$$x - st \geq x_n - L_2 - s t_n  \geq \varepsilon t_n - L_2.$$
In particular, up to increasing $n$, we have that
$$\alpha_1(x-st) \geq 1 - \delta .$$
It follows from the above that, for all $t \in (t'_n, t_n)$ and $x \in (x_n - L_2, x_n+ L_2)$, we have
$$(\underline{u}_2)_t (x - x_n)  - d_1 (\underline{u}_2)_{xx} (x-x_n) -r_1 \underline{u}_2  (x-x_n) (\alpha_1 (x- s t)  - a_2 v(x,t) - \underline{u}_2 (x-x_n)) \leq 0.$$
We can now apply the comparison principle to find that
$$u (x_n, t_n ) \geq \underline{u}_2 (0) =\eta_2 >0 ,$$
for any $n$ large enough. We have reached a contradiction. \medskip
	
\paragraph{\textit{Step 3. Conclusion}} We are now in a position to end the proof of Proposition~\ref{prop:u_strongfast1}. This also follows from a contradiction argument, as in the proof of \cite[Theorem 1.1]{w22}, thanks here to Proposition~\ref{prop:entire}
(instead of \cite[Proposition 2.1]{w22}).

For the reader's convenience, we provide the details as follows.
Given $\ep\in(0,(s_1^{*}-s)/2)$. Suppose that there is a sequence $\{(x_n,t_n)\}$ with $t_n\to +\infty$ as $n\to +\infty$ and
\beaa
(s+\ep)t_n\le x_n\le (s_1^{*}-\ep)t_n,\;\forall\, n,
\eeaa
such that
\be\label{theta}
|u(x_n,t_n)-1|+v(x_n,t_n)\ge\theta,\;\forall\, n,
\ee
for some positive constant $\theta$.
Let $(u_n,v_n)(x,t):=(u,v)(x+x_n,t+t_n)$ for $x\in\bR$, $t>-t_n$.
Then, by the regularity theory of parabolic equations, up to a subsequence, the sequence $\{(u_n,v_n)\}$ converges to $(u_\infty,v_\infty)$
locally uniformly for $(x,t)\in\bR^2$ such that $(u_\infty,v_\infty)$ is an entire solution of~\eqref{Ph}.

Now, due to \eqref{u_strongfast_step2} in the previous step, one can infer that $$u_\infty (x,t) \ge  \delta ,$$ for some $\delta >0$, for all $(x,t)\in\bR^2$.
Putting this together with $u_\infty\le 1$ and $0\le v_\infty\le 1$, we can apply Proposition~\ref{prop:entire} to get that $(u_\infty,v_\infty)\equiv (1,0)$. However, this contradicts~\eqref{theta}, and concludes the proof of Proposition~\ref{prop:u_strongfast1}.\end{proof}


\section{The two species competition system facing climate change : stronger but slower competitor}

Next, we turn to the proofs of Theorem~\ref{thm:2_strongslow} and Proposition~\ref{prop:diff_outcomes}. Hence, we again assume that $a_2 < 1 < b_1$, but now 
$$s_1^* < s_2^* ,$$
so that $u$ is the stronger but slower competitor. 

Thanks to Corollaries~\ref{thm:scalar_shifting1} and~\ref{th:general} (where we recall that the roles of~$u$ and~$v$ can be inverted without loss of generality), we are already done with the case when $s \geq s_1^*$. Thus, we will focus here on the case when 
$$s < s_1^* .$$
We will start with some general lemmas and then divide this case into further subcases depending on whether $s > s_{1,GL}^*$, $s < s_{1,nlp}^*$ or $s \in (s_{1,nlp}^*, s_{1,GL}^*)$. For later use, we also recall here that we always have $s_{1,nlp}^* \leq s_1^*$; see Theorem~\ref{thm:nlp} above.

\subsection{A few lemmas}

We start by exhibiting some subdomain where the solution $(u,v)$ converges to the steady state $(0,1)$, i.e. the faster species $v$ thrives away from both its competitor $u$ and the climate change.

\begin{lemma}\label{subcase1_lemma1}
	Assume that $a_2 < 1 < b_1$ and $s < s_1^* < s_2^*$. Let $(u,v)$ be the solution of \eqref{P} with initial data $(u_0,v_0)$ satisfying \eqref{i-bd} such that the support of $u_0$ is bounded from above. Then
	$$\limsup_{t \to +\infty} \left\{ \sup_{x \geq (s_1^* + \ep)t} |u(x,t)| + \sup_{(s_1^* +\varepsilon)t \leq x \leq (s_2^{*}-\varepsilon)t } |v(x,t) -1|  \right\} = 0, $$
	for any $\ep \in \left( 0, \frac{s_2^* - s_1^*}{2} \right)$.
\end{lemma}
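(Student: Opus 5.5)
The first limit is immediate: it is exactly statement $(i)$ of Corollary~\ref{thm:scalar_shifting1} applied to $u$, since $u_0$ has support bounded from above. For $v$, the upper bound is free because $v \le 1$. So the whole task is to show that $v$ converges to $1$ uniformly in the moving range $(s_1^*+\ep)t \le x \le (s_2^*-\ep)t$. I would follow the three-step scheme of the proof of Proposition~\ref{prop:u_strongfast1}, with the roles of $u$ and $v$ swapped. Since $v$ is the weak competitor, the Liouville step is replaced by a scalar Liouville result for $v$.

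\emph{Step 1: persistence of $v$ in frames of speed $c \in (s_1^*, s_2^*)$.} Fix such a $c$ and $\delta>0$ with $r_2(1-2\delta) > c^2/(4d_2)$. Then choose $L_1$ large, as in \eqref{eq:L1} with $d_1,r_1$ replaced by $d_2,r_2$. Since $c > s_1^*$, the first limit gives $b_1 u \le \delta/2$ on $(ct-L_1, ct+L_1)$ for $t \ge T$. Since $c > s$, we also have $\alpha_2(x-st) \ge 1-\delta/2$ there. Hence $v_t \ge d_2 v_{xx} + r_2 v(1-\delta - v)$ on this moving interval. The compactly supported function $\eta_1 e^{-\frac{c}{2d_2}(x-ct)}\cos(\pi(x-ct)/(2L_1))$ is a subsolution on it. Since $v(\cdot,T)>0$ by the strong maximum principle, comparison yields $\liminf_{t\to\infty}\inf_{|x-ct|\le L_1/2} v > 0$ for arbitrarily large $L_1$.

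\emph{Step 2: uniform positivity on the whole range.} I would argue by contradiction. Suppose $t_n\to\infty$, $x_n \in [(s_1^*+\ep)t_n,(s_2^*-\ep)t_n]$ and $v(x_n,t_n)\to 0$. Pick $c \in (s_2^*-\ep, s_2^*)$ and set $t_n' = x_n/c < t_n$. By Step 1, $v(x_n+\cdot, t_n')$ lies above a small stationary bump $\eta_2\cos(\pi x/(2L_2))$ on $(-L_2,L_2)$.

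For $t\in(t_n',t_n)$ and $|x-x_n|<L_2$ we need two facts. First, $x - st \ge \ep t_n - L_2$, so $\alpha_2 \ge 1-\delta$. Second, $x \ge (s_1^*+\ep)t_n - L_2 \ge (s_1^*+\ep/2)t$ for $n$ large, since $t\le t_n$, so $b_1 u \le \delta$ for large $n$ by the first limit.

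The stationary bump is then a subsolution of $v_t = d_2v_{xx}+r_2v(1-2\delta-v)$. This is where the setting is easier than in Proposition~\ref{prop:u_strongfast1}: no $1-a_2$ loss appears, since $u$ is small here. Comparison then gives $v(x_n,t_n)\ge\eta_2$, a contradiction. The main technical point is checking that $u$ stays small on the whole time interval $(t_n',t_n)$ near $x_n$, which the inequality above handles.

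\emph{Step 3: convergence to $1$.} Take any sequence $(x_n,t_n)$ in the range and shift $(u,v)$ by it. By parabolic estimates, a subsequence converges locally uniformly. The limit has $u_\infty \equiv 0$, using the first limit with $\ep/2$, and $\alpha_2 \to 1$ locally. So $v_\infty$ is an entire solution of $v_t = d_2 v_{xx} + r_2 v(1-v)$ with $\inf v_\infty>0$ by Step 2. By the Liouville result for Fisher-KPP (Proposition~1.14 in~\cite{bhn}) already used in the proof of Theorem~\ref{thm:lbsf}, $v_\infty\equiv 1$. A contradiction argument as in Step~3 of Proposition~\ref{prop:u_strongfast1} then gives uniform convergence of $v$ to $1$ on the range.
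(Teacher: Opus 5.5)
Your proof is correct and follows essentially the same route as the paper. The paper uses a moving exponential--cosine bump subsolution in frames of speed $c\in(s_1^*,s_2^*)$, then a stationary cosine bump transported as in Step~2 of Proposition~\ref{prop:u_strongfast1} to get uniform positivity on the whole range, and finally the scalar Fisher--KPP Liouville result from~\cite{bhn}. You add detail that the paper's sketch omits, notably the check that $b_1 u$ stays small near $x_n$ over the whole time interval $(t_n',t_n)$.
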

\begin{proof}
	The supremum limit for $u$ is already known from Corollary~\ref{thm:scalar_shifting1}, more precisely from statement~$(i)$. As far as the convergence of $v$ to $1$ is concerned, we only sketch the argument which proceeds similarly as the proof of Proposition~\ref{prop:u_strongfast1}. 
	
	First, for any $c \in (0, s_2^*)$, one defines
	$$\underline{v}_1 (x,t) := \left\{ 
	\begin{array}{ll}
		\eta_1 e^{-\frac{c}{2 d_2} (x-ct)} \cos \left( \frac{ \pi (x-ct) }{2L_1} \right) & \text{ if } x \in (ct - L_1 , ct + L_1), \vspace{3pt}\\
		0 & \text{ otherwise},
	\end{array}
	\right.$$
	which for small $\eta_1 >0$ and large $L_1 >0$ satisfies
	\begin{equation}\label{subcase1_subsol1}
		(\underline{v}_1)_t - d_2 (\underline{v}_1)_{xx} - r_2 \underline{v}_1 (1 - \delta -\underline{v}_1 ) \leq 0,
	\end{equation}
	for some $\delta>0$. If furthermore $c > s_1^*$, due to $u$ going to $0$ and $\alpha_2$ going to $1$ in the moving frame with speed~$c$, we get that $\underline{v}_1$ is also a subsolution of the $v$-equation in~\eqref{P}. By the comparison principle, eventually we find that
	$$\lim_{t \to +\infty} \inf_{x \in (ct - L_1/2, ct+ L_1/2)} v(x,t) >0,$$
	for any $c \in (s_1^*, s_2^*)$.
	
	Second, we find another subsolution of~\eqref{subcase1_subsol1} of the type
	$$\underline{v}_2 (x,t) := \left\{ 
	\begin{array}{ll}
		\eta_2  \cos \left( \frac{ \pi x }{2L_2} \right) & \text{ if } x \in ( - L_2 ,  L_2), \vspace{3pt}\\
		0 & \text{ otherwise},
	\end{array}
	\right.$$
	with $\eta_2,L_2 >0$. Unlike $\underline{v}_1$, and as in Step~2 of the proof of Proposition~\ref{prop:u_strongfast1}, this only gives a subsolution of the $v$-equation in~\eqref{P} when shifted in space and until $u$ and the climate change eventually catch up. Still, this is enough to eventually get that
	$$\lim_{t \to +\infty} \inf_{(s_1^* + \ep ) t \leq x \leq (s_2^* -\ep) t} v(x,t) >0,$$
	for any $\ep \in \left( 0, \frac{s_2^* -s_1^*}{2} \right)$. Applying a Liouville type result, according to which~$1$ is the only bounded entire in time solution with positive infimum of the scalar Fisher-KPP equation~\cite{bhn}, one may then conclude the proof of Lemma~\ref{subcase1_lemma1}.
\end{proof}

We immediately improve this, by pointing out that the presence of $v$ should in fact slow down the speed at which~$u$ spreads and replaces~$v$.
\begin{lemma}\label{lem:lessthanGL}
	Assume that $a_2 < 1 < b_1$ and $s < s_1^* < s_2^*$. Let $(u,v)$ be the solution of \eqref{P} with initial data $(u_0,v_0)$ satisfying \eqref{i-bd} such that the support of $u_0$ and $v_0$ are both bounded from above. Then
\begin{equation}\label{eq:claim_smtf_upper}
	\lim_{t \to +\infty} \left\{  \sup_{ x \geq ( \overline{s} + \ep ) t  }  |u(x,t)  |  +  \sup_{ ( \overline{s} + \ep ) t \le  x \le (s_2^*  - \ep)t} | v (x,t) -1|  \right\} = 0,
\end{equation}
	for any $\ep \in \left( 0, \frac{s_2^* - \overline{s}}{2} \right)$, where
	$$\overline{s} = \max\{ s_{1,GL}^*,s\}.$$
\end{lemma}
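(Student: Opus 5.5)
Our plan is to compare $(u,v)$ with the upper-lower solution $(\overline{u},\underline{v})$ of Proposition~\ref{prop:gl_15}. Since that pair solves the homogeneous system~\eqref{Ph}, we will only use it on a moving half-line where the climate change is harmless. The direction of harm is favourable for $\overline{u}$: because $\alpha_1\le 1$, the $u$-equation of~\eqref{P} is dominated by that of~\eqref{Ph}. For $\underline{v}$ we need $\alpha_2$ close to $1$, and we handle this with a slight perturbation. First, if $s_{1,GL}^*<s$, then $\overline{s}=s$ and the first limit follows by monotonicity from the case of a speed $s_1\in(s_{1,GL}^*,s)$. So it suffices to prove the following: for every $s_1\in(\max\{s_{1,GL}^*,s\},s_1^*)$, $u\to 0$ uniformly on $x\ge (s_1+\ep)t$ and $v\to 1$ on $(s_1+\ep)t\le x\le(s_2^*-\ep)t$. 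Then we let $s_1\downarrow\overline{s}$. If $s_{1,GL}^*=s_1^*$, the statement reduces to Lemma~\ref{subcase1_lemma1}, so we may assume $s_{1,GL}^*<s_1^*$.

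Fix such an $s_1$. Take $\delta$ small and build $(\overline{u},\underline{v})$ from Proposition~\ref{prop:gl_15} for a slightly perturbed homogeneous system, with $1$ replaced by $1-\delta'$ in the $v$-equation (equivalently after renormalisation), so that $s_1^\delta>s$ still holds. Then pick $K$ with $\alpha_2(z)\ge 1-\delta'$ for $z\ge K$. For large $T_0$, the domain $\{x\ge s_1^\delta (t-T_0)+X_0\}$ with $t\ge T_0$ lies in $\{x-st\ge K\}$ for all later times, because $s_1^\delta>s$. On this domain $(\overline{u},\underline{v})$ is an upper-lower solution of~\eqref{P}. On its left boundary $x=s_1^\delta(t-T_0)+X_0$ we have $\overline{u}=1\ge u$, and $\underline{v}=0\le v$ since the support of $\underline v$ lies in $[s_1^\delta t,\infty)$ in shifted coordinates. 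At the initial time $T_0$ we need two things. First, $\overline{u}(\cdot,T_0)\ge\min\{1,e^{-\mu (x-X_0)}\}\ge u(\cdot,T_0)$, which holds after a large shift because the support of $u_0$ is bounded above and, by a Gaussian heat-kernel bound, $u(\cdot,T_0)$ decays faster than any exponential as $x\to+\infty$. Second, $\underline{v}(\cdot,T_0)\le v(\cdot,T_0)$. For this we shift $\underline v$ far to the right into the zone $((s_1^*+\ep)T_0,(s_2^*-\ep)T_0)$, where Lemma~\ref{subcase1_lemma1} gives $v\ge 1-\delta$; this is compatible because $\underline{v}$ is compactly supported with supremum below $1-\delta$. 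The comparison principle then yields $u\le\overline{u}$ and $v\ge\underline{v}$ for $t\ge T_0$, and the spreading property of $(\overline{u},\underline{v})$ gives $\limsup u=0$ and $\liminf v\ge 1-2\delta$ on $(s_1^\delta+\ep)t\le x\le(s_2^\delta-\ep)t$. Since $\overline{u}$ is nonincreasing in $x$ and small beyond $s_1^\delta t$, the bound on $u$ extends to all $x\ge (s_1^\delta+\ep)t$; together with Corollary~\ref{thm:scalar_shifting1}, this covers $x\ge (s_2^*+\ep)t$ too. Letting $\delta\to 0$ gives $u\to0$ for $x\ge(s_1+\ep)t$.

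For the convergence of $v$ to exactly $1$, we argue by contradiction as in Step 3 of Proposition~\ref{prop:u_strongfast1}. Take sequences in the region with $|v-1|\ge\theta$ and pass to a limit entire solution of~\eqref{Ph}. The limit has $u_\infty\equiv0$ and $v_\infty\ge 1-2\delta>0$, so the scalar Fisher-KPP Liouville result forces $v_\infty\equiv1$, a contradiction.

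The main obstacle is the comparison at the initial time and on the boundary. Proposition~\ref{prop:gl_15} only guarantees that $\overline{u}(\cdot,0)$ dominates $\min\{1,e^{-\mu x}\}$, so we must check that $u(\cdot,T_0)$ lies below this profile after a shift. We must also make sure that the perturbation needed to absorb $\alpha_2<1$ does not destroy the properties $s_1^\delta\searrow s_1$ and $s_2^\delta\nearrow s_2^*$. We would handle this by applying Proposition~\ref{prop:gl_15} to the renormalised system, whose speeds depend continuously on the coefficients.
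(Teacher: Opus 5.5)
Your strategy matches the paper's. You compare $(u,v)$, on a half-line moving ahead of the climate change, with the upper-lower solution of Proposition~\ref{prop:gl_15} built for the perturbed system~\eqref{p-}, and use Lemma~\ref{subcase1_lemma1} to place $\underline v$ below $v$ at the initial time. Letting the left boundary move with speed $s_1^\delta$ instead of $(s+\ep)t$ is a harmless variant.

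\textbf{The gap.} The step you yourself call the main obstacle is never settled, and the reason you give does not address it.
\begin{itemize}
\item The pair $(\overline u,\underline v)$ is shifted by one common amount $X_0$, and the $v$-condition caps that shift from above. You need $(X_0,X_0+L)\subset((s_1^*+\ep)T_0,(s_2^*-\ep)T_0)$, so $X_0<(s_2^*-\ep)T_0-L$, with $T_0$ large.
\item Super-exponential decay of $u(\cdot,T_0)$ at a \emph{fixed} time only shows that \emph{some} shift works. What matters is how the required shift grows with $T_0$.
\item From $u_t\le d_1u_{xx}+r_1u$ and the bounded support of $u_0$ (heat kernel, or equivalently the exponential supersolution $e^{-\mu(x-x_0-ct)}$), you get $u(x,T_0)\le e^{-\mu(x-x_0-cT_0)}$ with $c=(d_1\mu^2+r_1)/\mu\ge s_1^*$. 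So the $u$-condition forces $X_0\ge x_0+cT_0$.
\item The two conditions are compatible for large $T_0$ only if $c<s_2^*-\ep$, i.e.\ $d_1\mu^2-s_2^*\mu+r_1<0$. This is not true for an arbitrary decay rate $\mu$.
\item It does hold here, because $\mu$ in Proposition~\ref{prop:gl_15} is the smaller root of $d_1\mu^2-s_2^*\mu+r_1+\lambda(s_2^*-s_1)=0$. Hence $c=s_2^*-\lambda(s_2^*-s_1)/\mu<s_2^*$. After renormalising the perturbed system, $s_2^*$ is replaced by $2\sqrt{d_2r_2(1-\delta')}<s_2^*$, so this survives the perturbation.
\end{itemize}
This is exactly the paper's exponential bound on $u$ at time $T$ together with the compatibility check \eqref{eq:X_cond1}--\eqref{eq:X_cond3}. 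It is where the precise initial profile $\min\{1,e^{-\mu x}\}$ of $\overline u$, and the specific value of $\mu$, genuinely matter. You need to add it: take $\ep<\lambda(s_2^*-s_1)/\mu$, and then $T_0$ large.

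\textbf{Smaller points.}
\begin{itemize}
\item Proposition~\ref{prop:gl_15} does not say that $\overline u$ is nonincreasing in $x$. So you cannot use monotonicity to control $u$ between $(s_2^\delta-\ep)t$ and $(s_2^*+\ep)t$. Use Corollary~\ref{thm:scalar_shifting1}$(i)$ instead: it gives $u\to0$ uniformly on $x\ge(s_1^*+\ep)t$, which overlaps $[(s_1^\delta+\ep)t,(s_2^\delta-\ep)t]$ once $\delta$ and $\ep$ are small.
\item Your opening reduction through a speed $s_1\in(s_{1,GL}^*,s)$ cannot work with this method, since the comparison domain must outrun the climate change. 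Your next sentence, restricting to $s_1>\max\{s_{1,GL}^*,s\}$, is the correct reduction.
\item The final Liouville argument for $v$ is valid but unnecessary. Since $\delta$ is chosen independently of the data, $\liminf v\ge 1-2\delta$ on the fixed region for every small $\delta$, together with $v\le1$, already gives $v\to1$.
\end{itemize}
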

Notice that, together with the results of Section~\ref{sec:prelim_single1}, this ends the proof of statement~$(iii)$ of Theorem~\ref{thm:2_strongslow} in the case when $s > s_{1,GL}^*$.

\begin{proof}
Recall that $s_{1,GL}^*$ was characterized in Section~\ref{sec:prelim_gl}, and in particular in Theorem~\ref{thm:gl_add}. The proof of Lemma~\ref{lem:lessthanGL} is the same regardless of whether $s_{1,GL}^* = s_{1,mtf}^*$ or $s_{1,GL}^* = s_{1,nlp}^*$, or more accurately, we do not need to make the distinction here as we rely on the general upper-lower solution from Proposition~\ref{prop:gl_15}.

We start the proof of~\eqref{eq:claim_smtf_upper} by considering another perturbed homogeneous system, i.e.
\be\label{p-}
\bss
u_t=d_1u_{xx}+r_1 u(1   - u- a_2 v),\;x\in\bR,\, t>0, \vspace{3pt}\\
v_t=d_2v_{xx}+r_2v(1 -  \delta -  b_1 u-v),\;x\in\bR,\, t>0 ,
\ess
\ee 
with $\delta >0$, whose solution we will denote by $(u_{-\delta}, v_{-\delta})$. Again, this system is equivalent to~\eqref{Ph} up to some renormalization and all of Section~\ref{sec:prelim_gl} applies up to some change of notation. In particular, the component $u_{-\delta}$ spreads with some speed $s_{1,GL,-\delta}^*$ in a way analogous to Theorem~\ref{thm:gl},  and $s_{1,GL,-\delta}^* \to s_{1,GL}^*$ as $\delta \to 0$. Furthermore, by Proposition~\ref{prop:gl_15}, system~\eqref{p-} admits an upper-lower solution $(\overline{u}_\delta, \underline{v}_\delta)$, whose first component, among other properties, spreads with some speed $s_1^\delta$ which up to reducing $\delta >0$ can be made strictly larger but arbitrarily close to $\overline{s} = \max \{ s_{1,GL}^* , s \}$.

Due to $\alpha_2 (+\infty) = 1$, there exists $T>0$ large enough so that the solution~$(u,v)$ of \eqref{P} is a lower-upper solution of \eqref{p-} on the parabolic subdomain 
$$Q_T := \{ t \geq T, \ x \geq (s+ \ep) t \}.$$
Our goal is now to find some spatial shift $X$ such that the inequalities
\begin{equation}\label{eq:almost}
	u (x, t) \leq \overline{u}_\delta (x - X, t -T), \quad v(x,t) \geq \underline{v}_\delta (x-X, t-T),
\end{equation}
hold on the parabolic boundary of~$Q_T$, that is on $\{ t = T, \ x \geq (s+ \ep ) T\}$ and $\{ t \geq T, \ x = (s+ \ep) t\}$, and then by the comparison principle they will hold on the whole $Q_T$.

On the one hand, recall from Proposition~\ref{prop:gl_15} that 
$$\overline{u}_\delta = 1, \quad \underline{v}_\delta = 0$$
for $x \leq s_1^\delta t$. Then, provided $\delta$ and $\ep$ are small enough so that $s_1^\delta > s+ \ep$, we have that
\begin{equation}\label{eq:X_cond1}
	X > s_1^\delta T
\end{equation}  
implies that
$$\overline{u}_\delta ((s+\ep) t - X, t - T) = 1 \geq u ((s+\ep)t,t),$$
$$\underline{v}_\delta ((s+\ep)  t - X, t-T) = 0 \leq v ((s+\ep)t,t),$$
and in particular \eqref{eq:almost} holds for $t \geq T$ and $x = (s+ \varepsilon) t$.

On the other hand, it is more intricate to check~\eqref{eq:almost} at time~$T$. First, we have from Proposition~\ref{prop:gl_15} that
	$$\overline{u}_\delta (x ,0) \geq \min \{ 1 ,  e^{-\mu_\delta x}\},$$
	where $\mu_\delta$ can be made arbitrarily close to $\mu$ the smaller (possibly double) root of
	$$d_1 \mu^2 - s_2^* \mu + r_1 + \lambda (s_2^* - \overline{s}) = 0,$$
and $\lambda$ is the smaller (possibly double) root of
$$d_1 \lambda^2 -  \overline{s} \lambda + r_1 (1 -a_2)=0 .$$
Notice that both $\lambda$ and $\mu$ are well-defined due to $\overline{s} \geq s_{1,GL}^* \geq s_{1,nlp}^*$; see our proof of Theorem~\ref{thm:nlp}.

Then, for all $t \geq 0$ and~$x \in \mathbb{R}$, one has (possibly up to some shift but without loss of generality) that
	\begin{equation}\label{eq:last?_super}
		u (x,t) \leq \min \{1 , e^{-\lambda (s_2^* -s_1)t} \times e^{-\mu_\delta ( x - s_2^* t)} \},
	\end{equation}
	where $s_1 \in [ \overline{s} ,s_2^*) $ is such that
	$$d_1 \mu_\delta^2 - s_2^* \mu_\delta + r_1 + \lambda (s_2^* - s_1) \leq 0.$$
 Inequality~\eqref{eq:last?_super} comes from the fact that its right-hand term is a supersolution of the $u$-equation of~\eqref{P} with $\alpha_1\equiv 1$ and $v \equiv 0$.
 
\begin{eqnarray*}
	u (\cdot, T) &  \leq & \min \{1, e^{-\lambda (s_2^* -s_1) T} \times e^{-\mu_\delta (x-s_2^* T)} \} \\
	& \leq & \min \{ 1, e^{-\mu_\delta (x - X)} \}\\
	& \leq & \overline{u}_\delta (\cdot - X, 0),
\end{eqnarray*}
provided that
\begin{equation}\label{eq:X_cond2}
	\mu_\delta X \geq \mu_\delta s_2^* T - \lambda (s_2^* - s_1) T.
\end{equation} 
However, if we take $X$ too large, then as $\underline{v}$ is shifted too much to the right it cannot be true that $ v(\cdot, T) \geq \underline{v} (\cdot -  X, 0)$. Still, let us recall by Lemma~\ref{subcase1_lemma1} that, up to increasing $T$,
$$\inf_{(s_1^* + \ep) T \leq x \leq (s_2^* - \ep) T } v (x,T) \geq 1 -\delta,$$
for any $\ep \in \left( 0 , \frac{s_2^* - s_1^*}{2} \right)$. Besides, the function $\underline{v}_\delta (\cdot,0)$ has a supremum less than $1-\delta$, and a compact support, say included in $(0,L)$ for some large enough $L>0$. It follows that the second inequality in~\eqref{eq:almost} holds for $t= T$ provided that
\begin{equation}\label{eq:X_cond3}
	( X, L  +X  )  \subset ( (s_1^* + \ep) T , ( s_2^* - \ep)T ) .
\end{equation}
Finally, one finds that $X$ can satisfy~\eqref{eq:X_cond1}, \eqref{eq:X_cond2} and~\eqref{eq:X_cond3} simultaneously, provided that~$T$ is large enough. Then one applies the comparison principle and conclude that	
$$\lim_{t \to +\infty} \sup_{x \geq (s_1^\delta + \ep )t }  |u (x,t) | = 0,$$
$$\liminf_{t \to +\infty} \inf_{(s_1^\delta + \ep )t \leq x \leq (s_2^\delta - \ep) t}   v(x,t)  \geq 1 - 2 \delta ,$$
for any $\ep \in \left(0, \frac{s_2^\delta - s_1^\delta}{2} \right)$. Since $s_1^\delta, s_2^\delta$ go respectively to $\overline{s}$, $s_2^*$ as $\delta \to 0$, this ends the proof.
\end{proof}

\subsection{Subcase 1: $s < s_{1,nlp}^*$}\label{sub:subcase1}

Now we turn to the (sub)case when $s < s_{1,nlp}^*$, i.e. the climate change is slower than the speed of~$u$ in the sense of some appropriate linearization around~$0$ taking count of the presence of $v$. In this subcase, we expect some hair-trigger effect to occur and for $u$ to spread regardless of the specifics of the initial data.

Recall that, thanks again to Corollaries~\ref{thm:scalar_shifting1} and~\ref{th:general}, we already know that
$$\lim_{t \to +\infty} \left\{\sup_{x\le (s - \ep)t} | v(x,t)| + \sup_{x\le (s -\ep)t}  | u(x,t)| + \sup_{x\ge (s_2^* +\ep)t} |v (x,t)| \right\} = 0,$$ 
for any $\ep >0$, and thanks to Lemma~\ref{lem:lessthanGL} together with $s < s_{1,nlp}^* \leq s_{1,GL}^*$, that
$$\lim_{t \to +\infty} \left\{ \sup_{x \geq (s_{1,GL}^* + \ep)t } |u(x,t)| + \sup_{(s_{1,GL}^* + \ep)t \le x \le ( s_2^* - \ep ) t} |v(x,t) -1| \right\} = 0,$$ 
for any $\ep \in \left(0 , \frac{s_2^* - s_{1,GL}^* }{2} \right)$.

It remains to prove that
$$\lim_{t \to +\infty} \left\{ \sup_{(s + \ep)t \le x \le ( s_{1,GL}^* - \ep ) t}  \Big[ |u(x,t)-1| + |v(x,t)| \Big]   \right\} = 0,$$ 
for any $\ep \in \left(0 , \frac{s_{1,GL}^* - s}{2} \right)$.\medskip

We start with the following lemma which shows that~$u$ at least spreads with the aforementioned linear speed $s_{1,nlp}^*$. A future step will be dedicated to showing that, once $u$ has grown enough behind the moving frame with speed $s_{1,nlp}^*$, it will be able to reach its full (nonlinear) speed $s_{1,GL}^*$.

\begin{lemma}\label{subcase1_lemma2}
	Assume that $a_2 < 1 < b_1$ and $s < s_{1,nlp}^* \leq s_1^* < s_2^*$. Let $(u,v)$ be the solution of \eqref{P} with initial data $(u_0,v_0)$ satisfying \eqref{i-bd} such that the support of $v_0$ is bounded from above. Then
$$\limsup_{t \to +\infty} \left\{  \sup_{(s+\varepsilon)t \leq x \leq (s_{1,nlp}^{*}-\varepsilon)t } \Big[ | u(x,t) -1| + |v(x,t)| \Big] \right\} = 0, $$
for any $\ep \in \left(0, \frac{s_{1,nlp}^* - s}{2}\right)$. 
\end{lemma}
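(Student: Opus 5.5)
We follow the three-step scheme of the proof of Proposition~\ref{prop:u_strongfast1}. The new feature is that $v$ no longer vanishes where $u$ must grow. We handle this by using $v\le 1$ behind the $v$-front and $v\to 0$ ahead of it, so that $u$ is a supersolution of a perturbation of the nonlocally pulled equation~\eqref{eq:shifting1}. We then use the compactly supported subsolution built in the proof of Theorem~\ref{thm:nlp}.

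\emph{Step 1: persistence in moving frames with speed $c\in(s,s_{1,nlp}^*)$.} Fix a small $\delta>0$. As in Section~\ref{sec:prelim_gl}, the comparison $v_t\le d_2v_{xx}+r_2v$ together with $v_0$ having support bounded above gives $v\le \delta$ on $\{x\ge (s_2^*+\delta)t\}$ for $t\ge T$, after a shift. On the half-line $\{x\ge st+K\}$ with $K$ large we also have $\alpha_1(x-st)\ge 1-\delta$. Hence on $Q:=\{t\ge T,\ x\ge st+K\}$ the function $u$ satisfies
\[
u_t\ge d_1u_{xx}+r_1u\big[1-\delta-u-a_2H((s_2^*+\delta)t-x)-a_2\delta H(x-(s_2^*+\delta)t)\big],
\]
which is the perturbed version of~\eqref{eq:shifting1}. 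Its nonlocally pulled speed $s_{1,nlp,\delta}^*$ tends to $s_{1,nlp}^*$ as $\delta\to0$. Take $c\in(s,s_{1,nlp,\delta}^*)$ and $c'\in(s,c)$. The subsolution $\underline u$ from the proof of Theorem~\ref{thm:nlp} (the glued $\underline u_1$, $\underline u_2$, adapted to the perturbed equation) is built with speed $c'$. At every time its support lies in $[c't, (s_2^*+\delta)t+\pi/\omega]$. Since $c'>s$, after a time shift this support stays inside $Q$, so $\underline u$ vanishes on the lateral boundary $x=st+K$. Shrinking $\eta,\eta_1,\eta_2$ gives $\underline u(\cdot,T)\le u(\cdot,T)$, using $u(\cdot,T)>0$. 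The comparison principle on $Q$ then yields $\liminf_{t\to\infty}\inf_{|x-ct|\le L}u(x,t)>0$. The window size $L$ can be made large by choosing $c$ inside the positivity region of $\underline u_1$. The \textbf{main obstacle} is precisely this step. We must check that the glued subsolution, which in Theorem~\ref{thm:nlp} is only positive on a half-line starting at $c't$, really stays compactly supported inside $Q$. We also need the moving frame of the left edge of $\underline u_1$, which is $c't$, to stay ahead of the climate front. If the construction fails to give a left endpoint moving at speed $>s$, we would first try multiplying $\underline u_1$ by a compactly supported cosine cutoff, as in Step~1 of Proposition~\ref{prop:u_strongfast1}, and absorbing the error using the strict inequality $c'<s_{1,nlp,\delta}^*$.

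\emph{Step 2: uniform positivity on $(s+\ep)t\le x\le(s_{1,nlp}^*-\ep)t$.} This proceeds exactly as Step~2 of Proposition~\ref{prop:u_strongfast1}. Suppose by contradiction that $u(x_n,t_n)\to0$. Set $t_n'=x_n/c$ with $c$ close to $s_{1,nlp}^*$, so that $u(\cdot+x_n,t_n')$ is bounded below on a large interval by Step~1. Then compare with the stationary cosine subsolution $\eta_2\cos(\pi x/2L_2)$ of $u_t=d_1u_{xx}+r_1u(1-a_2-\delta-u)$. This is a subsolution since $v\le1$, $a_2<1$, and $\alpha_1\ge1-\delta$ on the relevant region, because $x_n-st\ge\ep t_n-L_2$. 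The comparison gives $u(x_n,t_n)\ge\eta_2$, a contradiction.

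\emph{Step 3: conclusion.} Suppose the limit fails along a sequence $(x_n,t_n)$ in the given range. Translate the solution to $(x_n,t_n)$ and extract a limit by parabolic estimates. Since $x_n-st_n\to\infty$, the limit $(u_\infty,v_\infty)$ is an entire solution of~\eqref{Ph}. By Step~2 it satisfies $u_\infty\ge\delta>0$, and also $0\le u_\infty,v_\infty\le 1$. Proposition~\ref{prop:entire} then gives $(u_\infty,v_\infty)\equiv(1,0)$, which is the desired contradiction.
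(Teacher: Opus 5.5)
Your proposal is correct and follows essentially the paper's proof. The paper also bounds $u$ from below on $\{x\ge st+K\}$ by the compactly supported subsolution of the perturbed shifting equation~\eqref{eq:shifting1_delta} from the proof of Theorem~\ref{thm:nlp}, and then finishes exactly as in Steps~2--3 of Proposition~\ref{prop:u_strongfast1} via Proposition~\ref{prop:entire}.

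Two small remarks. First, the obstacle you flag does not arise. The glued subsolution is supported in $\big[ct+\ln(\eta_2/\eta_1)/\delta',\,(s_2^*+\delta)t+\pi/\omega\big]$, where $\delta'$ is the exponent gap in $\underline{u}_1$. It is therefore compactly supported, and its left edge moves at speed $c>s$. Second, in the locally pulled case $s_{1,nlp}^*=2\sqrt{d_1r_1(1-a_2)}$ the glued construction is unavailable, since $\lambda$ is not real for such $c$. There you must use the plain moving sine subsolution of $u_t=d_1u_{xx}+r_1u(1-\delta-a_2-u)$, which needs only $v\le 1$; this is exactly what the paper does in a separate case.
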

\begin{proof}Recall Theorem~\ref{thm:nlp} and that either $s_{1,nlp}^* = 2 \sqrt{d_1 r_1 (1-a_2)}$ or $s_{1,nlp}^* > 2 \sqrt{d_1 r_1 (1-a_2)}$. Let us deal with those two situations separately. \smallskip
	
	Assume first that $s_{1,nlp}^* = 2 \sqrt{d_1 r_1 (1-a_2)}$, and let $\delta$ be an arbitrarily small positive constant. Due to $\alpha_1 (+\infty) =1$ and $v \leq 1$, up to some spatial shift and without loss of generality, we have for all $x \geq (s+\ep) t$ that
	$$u_t \geq d_1 u_{xx} + r_1 u (1 - \delta -a_2 -u).$$
	The spreading speed of solutions of this homogeneous scalar equation is $2 \sqrt{d_1 r_1 (1-\delta -a_2)}$, thus we can find a subsolution moving with any speed $c \in \left( s, 2 \sqrt{d_1 r_1 (1-\delta - a_2)} \right)$, of the form
	$$\underline{u}_1 (x,t) :=  \left\{ 
	\begin{array}{ll}
		\eta_1 e^{-\frac{c}{2 d_1} (x-ct)} \sin \left( \frac{ \pi (x-ct) }{L_1} \right) & \text{ if } x \in (ct , ct + L_1), \vspace{3pt}\\
		0 & \text{ otherwise},
	\end{array}
	\right.$$
	i.e. such that 
	$$(\underline{u}_1)_t \leq d_1 (\underline{u}_1)_{xx} + r_1 \underline{u}_1 (1 - \delta - a_2 - \underline{u}_1).$$
	By comparison, we find that
	$$\liminf_{t \to +\infty} \inf_{ x \in (ct + L_1/4  , ct + 3L_1/4)} u (x, t) >0.$$
	Similarly as the proof of Proposition~\ref{prop:u_strongfast1}, this can be improved to 
		$$\liminf_{t \to +\infty} \inf_{ (s+\ep) t \leq x \leq (2 \sqrt{d_1 r_1 (1-\delta - a_2)} - \ep) t} u (x, t) >0,$$
	for any $\ep \in \left(0, \frac{2 \sqrt{d_1 r_1 (1-\delta-a_2)}  - s}{2}\right)$, and then 
	$$\limsup_{t \to +\infty} \left\{  \sup_{(s+\varepsilon)t \leq x \leq (s_{1,nlp}^{*}-\varepsilon)t } \big[ | u(x,t) -1| + |v(x,t)| \big] \right\} = 0, $$
	by Proposition~\ref{prop:entire} and letting $\delta \to 0$.\smallskip
	
	Next, we turn to the case when $s_{1,nlp}^* > 2 \sqrt{d_1 r_1 (1-a_2)}$. Recalling that $s_2^* > s_1^*$, this means we are in case~$(ii)$ of Theorem~\ref{thm:nlp}, in which the construction of subsolutions was slightly more intricate.
	
	Due to $\alpha_1 (+\infty) =1$, $v \leq 1$ and 
	$$\lim_{t \to +\infty} \sup_{x \geq (s_2^* + \ep)t} | v (x,t)| = 0,$$
	for any $\delta , \ep >0$, there is $T >0$ such that, for all $x \geq (s+ \ep) t$, it holds
	$$u_t \geq d_1 u_{xx} + r_1 u (1 -\delta - u -a_2 H ((s_2^* +\delta) t -x) - a_2 {\delta} H (x- (s_2^* + \delta)t)  ).$$
	This leads us to consider
	\begin{equation}\label{eq:shifting1_delta}
	U_t = d_1 U_{xx} + r_1 U (1 -\delta - U -a_2 H ((s_2^* +\delta) t -x) - a_2 {\delta} H (x- (s_2^* + \delta)t)  ),
	\end{equation}
	which is of the same type as~\eqref{eq:shifting1}, so that Theorem~\ref{thm:nlp} applies up to some change of notation. We also refer to~\cite[Theorem 6]{ly22} for the formula in the general case of a step-like growth rate function, that one may also apply here.
	
	In particular, for the part that is of relevance to us here, if 
		\begin{equation}\label{eq:delta1}
			s_2^* + \delta \in \left( 2 \sqrt{d_1 r_1 (1-\delta - a_2 \delta)}, 2 \sqrt{d_1 r_1 (1 -\delta - a_2)}
+ 2\sqrt{d_1 r_1 a_2 (1-\delta)} \right),
		\end{equation}
	then the solutions of \eqref{eq:shifting1_delta} with compactly supported initial data spread with speed
	$$s_{1,nlp,\delta}^{*}  := s_2^* + \delta - \frac{ \frac{(s_2^* + \delta )^2}{4} - d_1 r_1 (1-\delta - a_2 \delta)}{\frac{s_2^* + \delta}{2} -  \sqrt{d_1 r_1 a_2 (1 -\delta)}}.$$
	{Notice that everything depends continuously on $\delta$ here. In particular, since we are in case~$(ii)$ of Theorem~\ref{thm:nlp} now, we may assume that $\delta>0$ is small enough so that~\eqref{eq:delta1} holds.}
	
{Then, according to the proof of Theorem~\ref{thm:nlp}, whose sketch we included in Section~\ref{sec:prelim_nlp}}, for any $c \in (s+ \ep, s_{1,nlp,\delta}^*)$, there exists a nonnegative subsolution~$\underline{U}$ of~\eqref{eq:shifting1_delta} whose support is included in 
	$$(s+ \ep) t \leq x \leq  (s_2^* + 2 \ep) t,$$
	whose $L^\infty$-norm can be made arbitrarily small, and such that
	$$\liminf_{t \to +\infty} \underline{U} (ct, t) > 0. $$
	We can then apply the comparison principle and letting $\delta \to 0$, we get that
	$$\liminf_{ t\to +\infty} u (ct,t) > 0,$$
	for any $c \in \left( s + \ep , s_{1,nlp}^* \right)$.
	The end of the argument proceeds as the proof of Proposition~\ref{prop:u_strongfast1}, so we omit the details.
\end{proof}
With Lemmas~\ref{subcase1_lemma1} and~\ref{subcase1_lemma2} in hand, we now know that there are two zones ahead of the climate change, in each of which one species persists. The remaining question is: where is the separation between those two zones, i.e. where does  the transition between~$(1,0)$ and~$(0,1)$ occur? 

We already know from Lemma~\ref{lem:lessthanGL} that this transition must occur behind the moving frame with speed $s^*_{1,GL}$ corresponding to the homogeneous system. We claim in Theorem~\ref{thm:2_strongslow} that the transition actually moves (asymptotically in time) at speed~$s^*_{1,GL}$. The heuristics here is that, due to $s_{1,GL}^* \geq s_{1,nlp}^* > s$, the parameter functions~$\alpha_i$ are close to constants in the moving frame of the invading front of~$u$ and beyond. One may then expect that solutions of~\eqref{P} and~\eqref{Ph} behave and spread similarly. While this is not so simple to check, Section~\ref{sec:prelim_gl} provides us with the necessary toolbox to reach the wanted conclusion.  \smallskip

Let us now address the remaining wanted property that $u$ spreads at least with speed $s_{1,GL}^*$. Recall that, by Theorem~\ref{thm:gl_add}, we have $s_{1,GL}^* = \max \{ s_{1,mtf}^*, s_{1,nlp}^* \}$, where $s_{1,mtf}^*$ denotes the minimal traveling front speed of~\eqref{Ph} connecting $(1,0)$ and $(0,1)$. Moreover, by Lemma~\ref{subcase1_lemma2}, we already know that $u$ spreads at least with speed $s_{1,nlp}^*$. Therefore, it only remains to consider the case when $s_{1,GL}^* = s_{1,mtf}^* > s_{1,nlp}^*$, and to prove that
\begin{equation}\label{eq:claim_smtf}
\lim_{t \to +\infty} \left\{  \sup_{ (s + \ep ) t \le  x\le (s_{1,mtf}^* - \ep)t} \Big[ |u(x,t) - 1 | +  | v (x,t)| \Big] \right\} = 0,
\end{equation}
for any $\ep\in (0, (s_{1,mtf}^* - s)/2 )$. To do this, we consider the perturbed homogeneous problem
\be\label{p+}
\bss
u_t=d_1u_{xx}+r_1 u(1-  \delta  - u- a_2 v),\;x\in\bR,\, t>0,\vspace{3pt}\\
v_t=d_2v_{xx}+r_2v (1 - b_1  u-v),\;x\in\bR,\, t>0,
\ess
\ee
where $\delta >0$, whose solutions we will denote by~$(u_\delta, v_\delta)$. Though this is not exactly of the same form as~\eqref{Ph}, one can pass from one system to another by some renormalizations. In particular, \eqref{p+} admits traveling front solutions connecting~$(1-\delta,0)$ and~$(0, 1)$ with {a minimal speed~$s_{1,mtf,\delta}^*$, and} the first component of $(u_\delta, v_\delta)$ spreads with some speed $s_{1,GL,\delta}^*$ in an analogous way to Theorem~\ref{thm:gl}. 
{Furthermore, the speeds $s^*_{1,mtf,\delta}, s^*_{1,GL,\pm \delta}$ converge respectively to~$s_{1,mtf}^*, s_{1,GL}^*$ as $\delta \to 0$}.

For the same reason, Proposition~\ref{prop:gl_subsol} applies and provides lower-upper solutions to~\eqref{p+}. More precisely, for any small $\ep >0$, there exist a lower-upper solution to~\eqref{p+} of the form
$$(\underline{U}_\delta, \overline{V}_\delta) (x - (s_{1,mtf,\delta}^* - \ep) t ),$$
which one may check satisfies
$$\sup \underline{U}_\delta < 1, \quad  \inf \overline{V}_\delta  >0 ,$$
and, for some $Z>0$ and all $z \geq Z$,
$$\underline{U}_\delta (z) \leq 0, \qquad \overline{V}_\delta (z) \geq 1 .$$
Next, due to $\alpha_1 ( + \infty) = 1$ and Lemma~\ref{subcase1_lemma2}, there exists $T>0$ such that
$$\forall x \geq (s + \ep) t, \quad \alpha_1 (x - st) \geq 1 - \delta,$$
and
$$\forall t \geq T , \quad u ((s+ \ep) t,t) \geq \sup \underline{U}_\delta \ \mbox{ and } \ v((s+\ep)t,t) \leq \sup \overline{V}_\delta. $$
It follows that the solution $(u,v)$ of~\eqref{P} is an upper-lower solution of \eqref{p+} on the parabolic subdomain $\{ t \geq T, \, x \geq (s+ \ep) t\}$, and that
$$u (x,t) \geq \underline{U}_\delta (x - (s_{1,mtf,\delta}^* - \ep) t - (s+ \ep) T + Z),$$
$$v (x,t) \leq \overline{V}_\delta (x - (s_{1,mtf,\delta}^* - \ep) t -  (s+ \ep) T +Z )$$
for any $(x,t)$ on the parabolic boundary on that same subdomain. Therefore we can apply the comparison principle and find that the same inequalities hold for all $t \geq T$ and~$x \geq (s+ \ep) t$. We then conclude that
$$\lim_{t\to+\infty} \inf_{ (s + \ep)t \leq x \leq (s_{1,mtf,\delta}^* - 2 \ep) t } u (x,t) \geq 1 - \delta_1,$$
where $\delta_1$ comes from Proposition~\ref{prop:gl_subsol}. Applying the Liouville type result, letting $\delta \to 0$ and recalling that $\ep >0$ is arbitrarily small, this ends the proof of~\eqref{eq:claim_smtf}, thus of Theorem~\ref{thm:2_strongslow}.

	
\subsection{Subcase 2: $s \in \left( s_{1,nlp}^* , s_{1,GL}^*\right) $}

We now consider the trickier case when $s \in (s_{1,nlp}^{*}, s_{1,GL}^*)$ and prove Proposition~\ref{prop:diff_outcomes}. That is, we verify our claim that, in this range, the large time behavior changes depending on $\alpha_1, \alpha_2$ and the initial data $u_0,v_0$.\medskip

 We first construct a situation where $u$ goes to $0$, at least in the sense of the first item of Proposition~\ref{prop:diff_outcomes}. To do this, one may typically choose 
 $$\alpha_2 \equiv  \alpha_1 (\cdot + M),$$
	for some large positive $M$. Then $\alpha_2$ is ``much better'' than $\alpha_1$, i.e. there will be a large interval where $\alpha_2$ is close to $1$ while $\alpha_1$ is still negative. This means that $v$ may thrive in the moving frame with speed $s$. 
	 Due to $s > s_{1,nlp}^{*}$, i.e. the unfavourable zone moves faster than the spreading of $u$ in ``presence'' of $v$, this can prevent~$u$ from benefiting from its competitive advantage.
	
		Let us make this more rigorous. We construct an upper-lower solution of the following type:
		$$ \overline{u} (x,t) =\delta  \times \left\{
		\begin{array}{ll}
			1 & \ \mbox{ if } x \leq s t, \\
			e^{-\lambda (x -st) } & \ \mbox{ if } st < x \leq (s_2^* -  \sqrt{\delta})  t,\\
			 e^{-\lambda (s_2^* - \sqrt{\delta}- s) t} e^{-\mu (x- (s_2^* -\sqrt{\delta}) t)}& \ \mbox{ if } x > (s_2^* - \sqrt{\delta} ) t ,
		\end{array}
		\right.
		$$
		where $\delta, \lambda, \mu > 0$ to be specified later, and
			$$ \underline{v} (x,t) = \left\{
		\begin{array}{ll}
			0 & \ \mbox{ if } x \leq s t - L_1, \\
			\psi_1 (x -st )  & \ \mbox{ if } st - L_1 < x \leq s t,\\
			1 - 2 (b_1 + 1) \delta & \ \mbox{ if } s t < x \leq (s_2^* - \sqrt{\delta} ) t \\
			\psi_2 (x -(s_2^* - \sqrt{\delta}) t) & \ \mbox{ if } (s_2^* - \sqrt{\delta} ) t < x \leq (s_2^* - \sqrt{\delta}) t + L_2, \\
			0 & \ \mbox{ if } x > (s_2^* - \sqrt{\delta}) t + L_2
		\end{array}
		\right.
		$$
		where $\psi_1, \psi_2$ are positive parts of compactly supported subsolutions of the $v$-equation with $\alpha_2 \geq 1 - \delta$ and $u \leq \delta$. That is,
				$$d_2 \psi_1  '' + s \psi_1 ' + r_2 \psi_1 (1- \delta - \psi_1 - b_1 \delta) = 0\quad\mbox{in $(-\infty,0)$,}$$
	 			$$ d_2  \psi_2 '' + (s_2^* - \sqrt{\delta} ) \psi_2 ' + r_2 \psi_2 (1- \delta - \psi_2 - b_1 \delta) = 0\quad\mbox{in $(0,\infty)$,}$$
	 	together with
	 		$$\psi_1 ' (0) = \psi_2 ' (0) = 0,$$
	 		$$ \psi_1 (0)  = \psi_2 (0) = 1 - 2 (b_1 + 1 ) \delta > 0, $$
	 	where the last inequality holds provided that $\delta < \frac{1}{2 (b_1 + 1)}$, and
	 		$$  -L_1 = \sup \{ x <0  \, | \ \psi_1 (x) = 0 \} < 0 ,$$
	 		$$ L_2 = \inf \{ x >0 \, | \ \psi_2 (x) = 0 \} >0 .$$
	 	This choice already ensures that the function~$\underline{v}$ is continuous. Both $L_1$ and $L_2$ are well-defined real numbers by a standard ODE analysis (see also~\cite{aw75}), provided that
	 	$$s^2 < 4 d_2 r_2 (1- \delta - b_1 \delta),$$
	 	$$(s_2^* - \sqrt{\delta})^2 < 4 d_2 r_2 (1-\delta - b_1 \delta) .$$
	 	Indeed, these inequalities make $(\psi = 0, \psi ' = 0)$ a spiral point in the respective ODEs satisfied by $\psi_1$ and $\psi_2$. Moreover, thanks to $s < s_{1,GL}^* < s_2^* = 2 \sqrt{d_2 r_2}$, both {inequalities} are satisfied if $\delta$ is small enough.

	 	Let us now check that the pair $(\overline{u},\underline{v})$ is an upper-lower solution of~\eqref{P} when $\alpha_1, \alpha_2$ and the parameters $\delta, \lambda, \mu$ are suitably chosen. Let us first deal with the first differential inequality
$${\overline{u}_t \geq  d_1 \overline{u}_{xx} + r_1 \overline{u} ( \alpha_1 - \overline{u} - a_2  \underline{v} ).}$$
Provided that 
	 	\begin{equation*}\label{choice_alpha1}
	 		\alpha_1 < 0 \quad \text{ on } \  (-\infty, 0] ,
	 	\end{equation*}
	 	we have for $x < st$ that
	 		$$\overline{u}_t = 0 \geq   {d_1}\overline{u}_{xx} + r_1 \overline{u} ( \alpha_1 - \overline{u} - a_2  \underline{v} ) .$$
	 	For $x > st$, first notice that 
		\begin{equation}\label{v_major}
			\underline{v} (x,t) \geq (1- 2 (b_1 +1) \delta) \times H( (s_2^* - \sqrt{\delta}) t -x).
		\end{equation}
	Due to $s > s_{1,nlp}^*$, provided that $\delta$ is small enough, we may pick $0< \lambda \leq \mu $ as in our proof of Theorem~\ref{thm:nlp} so that
	 		$$\overline{u}_t \geq  {d_1}\overline{u}_{xx} + r_1 \overline{u} \left[ 1 - a_2  (1- 2\delta - 2 b_1 \delta ) H( (s_2^* - \sqrt{\delta}) t -x)  \right] .$$
	 	Together with $\alpha_1 \leq 1$ and~\eqref{v_major}, this implies the wanted differential inequality. Notice also that the left spatial derivative of $\overline{u}$ is larger than its right spatial derivative at both points $x = st$ and $x = ( s_2^* - \sqrt{\delta}) t$.
	 		
	 	Next, we turn to the second differential inequality, i.e. we check that
	 			$$ \underline{v}_t \leq {d_2}\underline{v}_{xx} + r_2 \underline{v} (\alpha_2 - \underline{v} - b_1 \overline{u}).$$
	 	This is trivially satisfied for $x < st - L_1$ and $x > (s_2^* - \sqrt{\delta}) t + L_2$. Provided that 
	 	\begin{equation*}\label{choice_alpha11}
	 		\alpha_2 > 1 - \delta \quad \text{ on } [-L_1, +\infty),
	 	\end{equation*} 
		the same differential inequality is satisfied in all cases $st - L_1 < x < st$, $st < x  < (s_2^* - \sqrt{\delta})t$ and $(s_2^* -  \sqrt{\delta}) t < x < (s_2^* - \sqrt{\delta}) t + L_2$. This comes from the definition of $\psi_1, \psi_2$ and the fact that~$\overline{u}\leq \delta$. Finally, the left spatial derivative of $\underline{v}$ is everywhere less than or equal to its right derivative.
		
		To sum up, what we have shown so far is that, for small enough $\delta >0$, there exist $\alpha_1, \alpha_2$ and $\lambda, \mu >0$ so that $(\overline{u},\underline{v})$ as defined above is an upper-lower solution of~\eqref{P}. In particular, if the initial data and this upper-lower solution are correctly ordered, we immediately infer that the $u$-component converges to $0$ uniformly as $t \to +\infty$ on the moving set $((s+\varepsilon)t,+\infty)$, for any $\varepsilon >0$. On the other hand, at this stage we only know that
		$$\liminf_{t \to +\infty} \inf_{(s+\ep)t \leq t \leq (s_2^*- \sqrt{\delta}) t} v(x,t) \geq 1- 2 (b_1 + 1) \delta.$$
		However, we cannot pass to the limit as $\delta \to 0$ in this inequality, as $\delta$ was picked before the initial data and even the functions $\alpha_1, \alpha_2$. Still, due to $s < s_2^*$ and the convergence of $u$ to $0$ on the moving set $((s +\varepsilon)t, +\infty)$, the conclusion that 
					$$\liminf_{t \to +\infty} \inf_{(s+\ep)t \leq t \leq (s_2^*-\ep )t} v(x,t) = 1,$$
					may now follow by Theorem~\ref{thm:lbsf} (see also Corollary~\ref{th:general}).\medskip

	
	Let us now construct a situation where
		$$\lim_{t \to \infty}\left\{\sup_{(s+\varepsilon) t \leq x \leq (s_{1,GL}^* - \varepsilon) t} \Big[ | u(x,t) - 1| + |v(x,t)| \Big] \right\} = 0,$$
	$$\lim_{t \to \infty}\left\{\sup_{(s_{1,GL}^* +\varepsilon) t \leq x \leq (s_{2}^* - \varepsilon) t} \Big[ | u(x,t) | + |v(x,t) -1| \Big] \right\} = 0,$$
	for any $\ep \in \left(0 , \min \left\{\frac{s_{1,GL}^* - s}{2}, \frac{s_2^* - s_{1,GL}^* }{2} \right\} \right)$. The second limit is already known thanks to Lemma~\ref{lem:lessthanGL}, so we may focus on the first one.
	
	This in fact proceeds similarly to the first subcase, dealt with in Section~\ref{sub:subcase1}. That is, we will again compare the solution with $(\underline{U}_\delta, \overline{V}_\delta)$ a lower-upper solution of the perturbed problem~\eqref{p+} provided by Proposition~\ref{prop:gl_subsol}. Recall that this lower-upper solution moves with some speed $s_{1,mtf,\delta}^*$, which as $\delta \to 0$ approaches $s_{1,mtf}^*$, which is none other than $s_{1,GL}^*$ due to $s_{1,GL}^* > s_{1,nlp}^*$ here.
	
	Beforehand, let us check that, for any $\eta >0$, there exists $\alpha_1$, $\alpha_2$ and an initial datum $(u_0,v_0)$ such that
	\begin{equation}\label{eqn:x_delta}
		 \forall x \geq x_\eta, \quad \alpha_1 (x) \geq 1 - \eta, \quad \mbox{and} \quad \forall t \geq 0 , \quad u( st + x_\eta ,t) \geq 1 - \eta  .
		\end{equation}
	for some $x_\eta >0$. 
	
	First we pick $\alpha_1$ so that
		$$\alpha_1 (x) \geq 1  -\frac{\eta}{3},$$
	for all $x \geq 0$, and in particular the first wanted inequality is already satisfied. Then, due to $s < s_1^*$, we may find a subsolution $\underline{u}$ of the scalar homogeneous equation
	   $$u_t = d_1 u_{xx} + r_1 u \left[ 1 -  \frac{2 \eta}{3} - u \right],$$
	whose compact support moves with speed $s$, and such that $\underline{u} (st, t) = 1 - \eta$ for all $t \geq 0$. We omit the details but this can be done by a careful phase plane analysis as in~\cite{aw75}. Up to some shift $x_\eta$, we may assume that its support is included in $\{ x \geq st \}$, and then 
	$$\underline{u} (st + x_\eta, t) \geq 1 - \eta.$$

	Next, for a given $\alpha_2$ and compactly supported $v_0$, one may check that the solution of 
		$$v_t  = d_2  v_{xx} +  r_2  v [\alpha_2 (x-st) - v],$$
	satisfies that, for some $M>0$ large enough,
	$$\sup_{ t \geq 0 , x \leq st -M } v (x,t) \leq \frac{\eta}{3a_2}.$$
	This can be done by means of an exponential supersolution on a left-half line where $\alpha_2$ is negative; we omit the details and refer to the beginning of our proof of Theorem~\ref{thm:lbsf} for a similar argument. Up to shifting $\alpha_2$ and $v_0$, by comparison we find that the solution of the original competition system with climate change satisfies
	$$ v (x,t) \leq \frac{\eta}{3 a_2},$$
	for all $t \geq 0$ and $x$ in the (moving compact) support of $\underline{u}$. Then
	\begin{eqnarray*}
		\underline{u}_t & \leq & d_1 \underline{u}_{xx} + r_1 \underline{u} \left[ 1 - \frac{2 \eta}{3} - \underline{u} \right] 	\\
		& \leq & d_1 \underline{u}_{xx} + r_1 \underline{u} \left[ \alpha_1 (x-st) - a_2 v - \underline{u} \right].
	\end{eqnarray*}
	Up to taking $u_0$ above $\underline{u}$, we conclude by comparison that \eqref{eqn:x_delta} indeed holds true.	
	
	With~\eqref{eqn:x_delta} in hand, letting $\eta$ small enough, and up to increasing (resp. decreasing) $u_0$ (resp. $v_0$), which clearly will preserve \eqref{eqn:x_delta}, we may assume that 
		$$u \geq \underline{U}_\delta, \quad v \leq \overline{V}_\delta,$$
	on the parabolic boundary of the moving set $(st + x_\eta , +\infty)$. As in the end of Section~\ref{sub:subcase1}, a use of the Liouville type Proposition~\ref{prop:entire} for the homogeneous competition system eventually yields that
	$$\lim_{t \to \infty}\left\{\sup_{(s+\varepsilon) t \leq x \leq (s_{1,mtf,\delta}^* - \ep ) t} \Big[ | u(x,t) - 1| + |v(x,t)| \Big] \right\} = 0,$$
	for any small $\ep >0$. Again, since we picked $\delta$ before the initial data, we cannot immediately pass to the limit as $\delta \to 0$. Still, with this in hand, we can now repeat the same argument, proceeding exactly as in Section~\ref{sub:subcase1}, and compare $(u,v)$ with lower-upper solutions $(\underline{U}_\delta, \overline{V}_\delta)$, but now with arbitrarily small $\delta >0$. Recalling that $s_{1,mtf,\delta}^* \to s_{1,GL}^*$ as $\delta \to 0$, this finally ends this proof.


\bigskip

\end{document}